\documentclass[a4paper,10pt]{amsart}


\usepackage[utf8]{inputenc}
\usepackage{amssymb}
\usepackage{amsmath}
\usepackage{enumerate}


\newtheorem{theorem}{Theorem}[section]

\newtheorem{lemma}[theorem]{Lemma}
\newtheorem{proposition}[theorem]{Proposition}
\newtheorem{corollary}[theorem]{Corollary}
\newtheorem{definition}{Definition}[section]
\theoremstyle{remark}
\newtheorem{remark}[theorem]{Remark}
\theoremstyle{definition}

\numberwithin{equation}{section}


\newcommand{\R}{\ensuremath{\mathbb{R}}}
\newcommand{\N}{\ensuremath{\mathbb{N}}}
\newcommand{\E}{\ensuremath{\mathcal{E}}}
\newcommand{\Levy}{\ensuremath{\mathcal{L}}}
\newcommand{\dif}{\mathrm{d}}
\newcommand{\ra}{\rightarrow}
\newcommand{\ua}{\uparrow}
\newcommand{\da}{\downarrow}
\newcommand{\mt}{\mapsto}
\newcommand{\alp}{\alpha}

\newcommand{\Div}{\mbox{div}}
\DeclareMathOperator*{\esssup}{ess \, sup}
\DeclareMathOperator*{\essinf}{ess \, inf}
\DeclareMathOperator{\sgn}{sgn}
\DeclareMathOperator{\Ent}{E}
\DeclareMathOperator{\Lip}{Lip}
\DeclareMathOperator{\supp}{supp}
\DeclareMathOperator*{\cO}{\mathit{O}}
\DeclareMathOperator*{\co}{\mathit{o}}
\DeclareMathOperator*{\sinc}{sinc}
 

\begin{document}

\title[Continuous dependence estimates for fractional
PDEs]{Optimal continuous dependence estimates for fractional degenerate parabolic equations}

\author[N.~Alibaud]{Natha\"{e}l Alibaud}
\address[Natha\"{e}l Alibaud]{UMR CNRS 6623, Universit\'e de Franche-Comt\'e\\
16 route de Gray\\ 25 030 Besan\c{c}on cedex, France\\ 
and\\
\'Ecole Nationale Sup\'erieure de M\'ecanique et des Microtechniques, 26 chemin de l'\'Epitaphe, 25030 Besan\c{c}on cedex, France\\
and\\
Department of Mathematics, Faculty of Science\\
Prince of Songkla University\\
Hat Yai, Songkhla, Thailand, 90112}
\email{nathael.alibaud\@@{}ens2m.fr}

\author[S.~Cifani]{Simone Cifani}
\address[Simone Cifani]{Department of Mathematics\\
Norwegian University of Science and Technology (NTNU)\\
N-7491 Trondheim, Norway} \email[]{simone.cifani\@@{}math.ntnu.no}

\author[E.~R.~Jakobsen]{Espen R. Jakobsen}
\address[Espen R. Jakobsen]{Department of Mathematics\\
Norwegian University of Science and Technology (NTNU)\\
N-7491 Trondheim, Norway} \email[]{erj\@@{}math.ntnu.no}
\urladdr{http://www.math.ntnu.no/\~{}erj/}

\subjclass[2010]{35R09, 35K65, 35L65, 35D30, 35B30}

\keywords{Fractional/fractal conservation laws, nonlinear degenerate diffusions, fractional Laplacian, optimal continuous dependence estimates, quantitative continuous dependence results}

\thanks{This research was supported by the Research Council of Norway
  (NFR) through the project``Integro-PDEs: Numerical methods,
  Analysis, and Applications to Finance,'' and by the ``French ANR
  project CoToCoLa, no. ANR-11-JS01-006-01.''} 

\begin{abstract}
We derive continuous dependence 
estimates for weak entropy
solutions of degenerate parabolic equations with nonlinear
fractional diffusion. 
The diffusion term involves the fractional
Laplace operator, $\Delta^{\alp/2}$ for $\alp \in (0,2)$.
Our results are quantitative and we exhibit an example 
for which they are optimal.
We cover the dependence on
the nonlinearities, and for the first time, the
 Lipschitz dependence on $\alpha$ in the $BV$-framework.
The former estimate (dependence on nonlinearity) 
is robust in the sense that it 
is stable 
in the limits~$\alp \da 0$ and~$\alp \ua 2$. In the limit $\alp\ua2$, $\Delta^{\alp/2}$ converges to the usual
Laplacian, and we show rigorously that we recover the 
optimal
continuous dependence result of \cite{CoGr99} for local
degenerate parabolic equations (thus providing an alternative proof). 
\end{abstract}

\maketitle

\tableofcontents

\section{Introduction}
In this paper we consider the following Cauchy problem: 
\begin{equation}\label{1}
\begin{cases}
\partial_t u+\Div f(u) +(-\triangle)^{\frac{\alpha}{2}} \varphi(u)=0 &\text{in} \quad  Q_T:=\mathbb{R}^d\times(0,T),\\
u(x,0)=u_{0}(x), &\text{in} \quad \R^d,
\end{cases}
\end{equation}
where~$T>0$ is fixed, $u=u(x,t)$ is the unknown function, $\Div$ and $\triangle$
denote divergence and Laplacian with 
respect to~$x$, and~$(-\triangle)^{\frac{\alpha}{2}}$,
$\alpha \in (0,2)$, is the fractional Laplacian e.g. defined
as 
\begin{equation}\label{referee-fourier-def}
(-\triangle)^{\frac{\alpha}{2}}
\phi:=\mathcal{F}^{-1} \left( |2 \, \pi \cdot|^{\alpha} \mathcal{F} \phi
\right)
\end{equation}
with the Fourier transform $\mathcal{F} \phi(\xi):=\int_{\R^d} e^{-2 \, i \, \pi \, x \cdot \xi} \, \phi(x) \, \dif x$. Notice that \eqref{referee-fourier-def} is compatible with the formula $-\triangle \phi=\mathcal{F}^{-1} \left( |2 \, \pi \cdot|^{2} \mathcal{F} \phi
\right)$. 
Throughout the paper we assume that
\begin{align}
&u_0 \in L^\infty \cap L^1 \cap BV(\R^d),\label{u0}\\
&f \in \left(W_{\scriptstyle \text{loc}}^{1,\infty}(\R)\right)^d
\text{ with } f(0)=0,\label{flux}\\
& \varphi \in W_{\scriptstyle \text{loc}}^{1,\infty}(\R) \text{ is nondecreasing with } \varphi(0)=0.\label{Aflux}
\end{align}

\begin{remark}
Subtracting constants from $f$ and~$\varphi$ if necessary, there is no loss of generality in assuming that~$f(0)=0$ and~$\varphi(0)=0$.
\end{remark}

The fractional Laplacian is the generator of the symmetric
$\alp$-stable process, the most famous pure jump L\'evy process. There
is a large literature on L\'evy 
processes, we refer to e.g. \cite{Sat99} for more details, and they
are important in many modern 
applications. Being very selective, we mention radiation
hydrodynamics~\cite{Ros89,ScTa92,RoYo07}, anomalous diffusion in
semiconductor growth~\cite{Woy01}, over-driven gas
detonations~\cite{Cla02}, mathematical finance~\cite{CoTa04}, and  flow
in porous media~\cite{DeQuRoVa11,DeQuRoVa12}.

Due to the second part of assumption \eqref{Aflux},
the term $(-\triangle)^{\frac{\alpha}{2}} \varphi(u)$ is a nonlinear
and nonlocal diffusion term. It formally converges
toward $\varphi(u)$ and $-\triangle \varphi(u)$ as $\alpha \da 0$
and $\alpha \ua 2$ respectively. Hence, Equation~\eqref{1} could be
seen as a nonlocal ``interpolation'' between the
hyperbolic equation
\begin{equation}\label{nat:scl}
\partial_t u+\Div f(u)+\varphi(u)=0,
\end{equation}
and the degenerate parabolic equation
\begin{equation}\label{nat:local-eq}
\partial_t u+\Div f(u)-\triangle  \varphi(u)=0.
\end{equation}
Equation \eqref{1} is said to be supercritical if $\alpha<1$,
subcritical if $\alpha>1$, and critical if $\alpha=1$. The diffusion function $\varphi$ is said to be
strongly 
degenerate 
if
$\varphi'$ 
vanishes 
on a nontrivial interval.
Equation~\eqref{1} 
can therefore be of mixed hyperbolic parabolic type depending on
the choice of $\alpha$ and $\varphi$. Note that
in the mathematical community,
interest in nonlinear nonlocal diffusions is 
in fact very recent, and only few results exist; cf. 
e.g. \cite{BiKaIm10,BiKaMo10,CaVa10,CiJa11,DeQuRoVa11,AlCiJa12,DeQuRoVa12,DeQuRoVapr}
and the references therein.

Let us give the main references for the well-posedness of the Cauchy problems for  \eqref{nat:scl} and \eqref{nat:local-eq}. For a more complete bibliography, see the books \cite{Dib93,Daf05,Vaz07} and the references in \cite{KaRi03}. In the hyperbolic case where $\varphi' \equiv 0$, we get the scalar conservation law $\partial_t u+\Div f(u)=0$. The solutions of this equation could develop discontinuities in finite time and the weak solutions of the Cauchy problem are generally not unique.
The most famous uniqueness result relies on the notion of entropy solutions introduced in \cite{Kru70}. 
In the pure diffusive case where $f' \equiv 0$, there is no more creation of shock and the initial-value problem for $\partial_t u-\triangle \varphi(u)=0$ admits a unique weak solution, cf. \cite{BrCr79}.
Much later, the adequate notion of entropy solutions for
mixed hyperbolic parabolic equations
was introduced in~\cite{Car99}.
This paper
focuses on 
an initial-boundary value problem.
For a general well-posedness result applying to both
\eqref{nat:scl} and \eqref{nat:local-eq}, see e.g.~\cite{KaRi03}.

At the same time, there has been a large interest in nonlocal versions of these equations. The first work seems to be \cite{CoGrLo96} on nonlocal time fractional derivatives,  
cf. also \cite{JaWi03}. The study of nonlocal diffusion terms has probably been initiated by \cite{BiFuWo98}. Now,  the well-posedness is quite well-understood in the
nondegenerate linear case where $\varphi(u)=u$. Smooth solutions exist and are unique for subcritical
equations~\cite{BiFuWo98,DrGaVo03}, shocks could
occur~\cite{AlDrVo07,KiNaSh08} and weak solutions could be
nonunique~\cite{AlAn10} for supercritical equations, 
entropy
solutions exist and are always unique~\cite{Ali07,KaUl11}; cf. also \cite{ChCz10,ChCzSi10} for original regularizing effects.
Very recently, the well-posedness theory
of entropy solutions
was extended
in~\cite{CiJa11} to
cover
the
full problem~\eqref{1}, even for strongly degenerate~$\varphi$. See also \cite{DeQuRoVa11,DeQuRoVa12} on fractional porous medium type equations, and \cite{DeQuRoVapr} on a logarithmic diffusion equation.

This paper is devoted to continuous dependence estimates
for~\eqref{1}, i.e. explicit estimates on the difference of two
entropy solutions $u$ and~$v$ in terms of the difference of their respective
data $(\alpha,u_0,f,\varphi)$
and~$(\beta,v_0,g,\psi)$. Let us point out that we investigate quantitative results which should be distinguished from qualitative ones. By qualitative, we mean stability results only stating that if $(\alpha_n,u_0^n,f_n,\varphi_n)
$ converges toward $(\alpha,u_0,f,\varphi)$, then the associated entropy solutions $u_n$ converge toward $u$. For scalar conservation laws, the first quantitative result on the continuous dependence on $f$ appeared in \cite{Daf72} and also in \cite{Luc80} some years later. Roughly speaking, it states that for $BV$ initial data $u_0 = v_0$,
\begin{equation}\label{referee-dafe}
\|u(\cdot,t)-v(\cdot,t)\|_{L^1} = \cO \left(\|f'-g'\|_\infty\right),
\end{equation}
where throughout the $L^\infty$-norm is always taken over
the range of~$u_0$. Next, the optimal error in $\sqrt{\epsilon}$ for the parabolic regularization $\partial_t u^\epsilon+\Div f(u^\epsilon)-\epsilon \, \triangle u^\epsilon=0$ of scalar conservation laws was established in \cite{Kuz76}. In that paper, the author has developed a general method of error estimation based on the Kruzhkov's device of doubling the variables \cite{Kru70}. We use this method in the present paper. As far as degenerate parabolic equations are concerned, the continuous dependence on $\varphi$ 
was first investigated in \cite{BeCr81} for the equation $\partial_t u-\triangle
\varphi(u)=0$. 
Here the
motivation was to obtain qualitative results under very general
assumptions.
Quantitative results were obtained in \cite{BoPe98,CoGr99} for the full equation \eqref{nat:local-eq}. In \cite{BoPe98}, the authors established alternative estimates to \eqref{referee-dafe} involving weaker norms,
as roughly speaking an estimate in $\|f-g\|_\infty^\frac{1}{2}$.
They gave different estimates for the $\varphi$-dependence with $\psi \equiv 0$. An estimate for nontrivial $\psi$ was given in \cite{CoGr99}. Roughly 
speaking, it states that if $u$ has the same data as $v$  except for
$\varphi \neq \psi$, then
\begin{equation}\label{intro-eol}
\|u(\cdot,t)-v(\cdot,t)\|_{L^1} = \cO \left(\|\sqrt{\varphi'}-\sqrt{\psi'}\|_\infty\right).
\end{equation}
Recently, Estimates \eqref{referee-dafe} and \eqref{intro-eol} were extended in \cite{KaRi03,ChKa05,ChKa06} to anisotropic diffusions and $(x,t)$-dependent data; cf. also \cite{AnBeKaOu09,Lukpr} for recent qualitative results on local equations. 
For nonlocal equations, a number of papers were concerned with convergence rates for vanishing viscosity methods \cite{ScTa92,CoGrLo96,Dro03,DrIm06,Ali07}.
To the best of our knowledge, the first estimate on the ``general continuous dependence 
on the data''
was given in \cite{KaUl11}. 
It concerns the case of
linear nondegenerate L\'evy diffusions. The main novelty was the explicit dependence in the L\'evy measure, corresponding to the explicit dependence in $\alpha$ for the particular case of the fractional Laplacian.
In~\cite{AlCiJa12}, 
the authors of the present paper established a
continuous dependence estimates for
general
nonlinear degenerate L\'evy diffusions.
For a qualitative result in the spirit of \cite{BeCr81}, see the very
recent work \cite{DeQuRoVa12} on the fractional porous medium equation $\partial_t u+(-\triangle)^{\alpha/2} (|u|^{m-1} \, u)=0$, $m>0$. In that paper, the continuous dependence on $(\alpha,m,u_0)$ is established under more general assumptions.

Before explaining our main contributions, let us refer the reader to
more or less related work. The theory of continuous dependence
estimates for nonlocal equations was probably initiated in the
context of viscosity solutions of fully nonlinear integro-PDEs, cf. \cite{JaKa05} and the references therein. See also \cite{Imb05,DrIm06} for error estimates for vanishing viscosity methods.
The question of 
$\alpha$-continuity has been raised earlier, e.g. when
looking for a priori estimates that are robust or
uniform as $\alpha \ua 2$.  Such results can be found in e.g. \cite{CaSi09,CaSi11}, see also  \cite{KaScpr} and the references therein.

The starting point of the present paper is the general
theory of~\cite{AlCiJa12}. 
It is worth mentioning that different estimates could be difficult to compare, as e.g \eqref{referee-dafe} with the estimate in $\|f-g\|_\infty^\frac{1}{2}$ of \cite{BoPe98}. Hence, a remarkable feature is that the estimates in \eqref{referee-dafe} and \eqref{intro-eol} are optimal for linear equations,
cf. the discussion of Section \ref{sec:optim}. A natural question is whether the estimates of \cite{AlCiJa12} applied to \eqref{1} possess such a property. The answer is positive
only
in the supercritical case $\alp<1$. In
this paper,
we obtain optimal
estimates
for all cases. To do so we
restart the proofs from the beginning, by taking into account the
homogeneity properties of the fractional Laplacian. The main
ingredients are 
a new linearization argument a la Young measure theory/kinetic
formulations, and for the linear case, a clever change of the (jump)
$z$-variable in \eqref{nat:Levy-form}. This change of variable allows
us to adapt ideas from viscosity solution theory 
developed in e.g. \cite{JaKa05}.
Let us also refer the reader to
\cite{Say91} for other applications of this change of variable in the
context of viscosity solutions. Roughly speaking, we prove that
\begin{equation}
\label{nonloc}
\|u(\cdot,t)-v(\cdot,t)\|_{L^1} =
\begin{cases}
\cO \left( \|(\varphi')^{\frac{1}{\alpha}}-(\psi')^{\frac{1}{\alpha}}\|_\infty \right), & \mbox{$\alpha>1$,}\\
\cO \left( \|\varphi' \, \ln \varphi'-\psi' \, \ln \psi'\|_\infty \right), & \mbox{$\alpha=1$,}\\
\cO \left( \|\varphi'-\psi'\|_\infty\right), & \mbox{$\alpha<1$},
\end{cases}
\end{equation}
with uniform constants in the limits $\alpha \da 0$ and $\alpha \ua 2$.
Note well that just as in~\cite{AlCiJa12}, our
proofs work directly with the entropy solutions without needing tools like entropy defect
measures, etc.. And even though these tools play a key role in the local
second-order theory, the arguments here really seem to be less
technical relying only on basic convex
inequalities and integral calculus.  Hence, it
seems interesting to mention that 
we recover the 
result~\eqref{intro-eol} rigorously from \eqref{nonloc} by passing to
the limit.
Another remarkable feature is that a simple rescaling transforms the Kuznetsov type  estimate~\eqref{nonloc} into the following time continuity estimate: 
\begin{equation*}
\|u(\cdot,t)-u(\cdot,s)\|_{L^1} =
\begin{cases}
\cO \left( |t^{\frac{1}{\alpha}}-s^{\frac{1}{\alpha}}| \right), & \mbox{$\alpha>1$,}\\
\cO \left( |t \, \ln t-s \, \ln s| \right), & \mbox{$\alpha=1$,}\\
\cO \left( |t-s|\right), & \mbox{$\alpha<1$}.
\end{cases}
\end{equation*}
This result is optimal and strictly better than earlier results in
\cite{CiJapr}, see 
Remark \ref{remtime}. E.g. for positive times, we get Lipschitz
regularity in time with values in $L^1(\R^d)$. This is a regularizing
effect in time when $\alp\geq1$ and $u$ not more than $BV$ initially.
 
In the second main contribution of this paper, we focus on the continuous dependence  on
$\alpha$. By
stability arguments, it is possible to show that the unique entropy
solution $u=:u^\alpha$ is continuous in $\alpha \in [0,2]$ with
values in $L^1_{\rm loc}$. In this paper, we prove that in the
$BV$-framework, it is in fact locally Lipschitz continuous 
in $\alpha \in (0,2)$ with values in $C([0,T];L^1)$.
To the best of our knowledge, such an $\alpha$-regularity result has
never been obtained before. More precisely, the theory of
\cite{AlCiJa12} implies the result for $\alpha \in (0,1)$ but not
for $\alpha \in [1,2)$. For 
the latter
range of exponents, all the results 
cited above 
are either qualitative or suboptimal. The new ingredient to get the Lipschitz regularity is again a change of
(the jump) 
variable.
It seems interesting to recall that the type of
Equation \eqref{1} could change from parabolic when $\alpha>1$ to
hyperbolic when $\alpha<1$. As a consequence, quite different
behaviors are observed in the $\varphi$- and $t$-continuity when
$\alpha$ crosses $1$, cf. the continuous dependence estimates above. A
natural question is thus whether such kind of phenomena arises in the
$\alpha$-regularity? We prove that the answer is positive by carefully
estimating the best Lipschitz constant of the function $\alpha \mt
u^\alpha$ with respect to the position of $\alpha$ and the other
data. More precisely, for $\lambda \in (0,2)$ we define
$$
\Lip_\alpha(u;\lambda):=\limsup_{\alpha,\beta \ra \lambda} \frac{\|u^\alpha-u^\beta\|_{C([0,T];L^1)}}{|\alpha-\beta|},
$$
and roughly speaking we prove that
\begin{equation*}
\Lip_\alpha(u;\lambda) =
\begin{cases}
\cO \left( M^\frac{1}{\lambda} \, |\ln M|  \right), & \mbox{$\lambda>1$,}\\
\cO \left( M \, \ln^2 M \right), & \mbox{$\lambda=1$,}\\
\cO \left( M \right), & \mbox{$\lambda<1$},
\end{cases}
\end{equation*}
for $M:=T \, \|\varphi'\|_\infty$, and
\begin{equation*}
\Lip_\alpha(u;\lambda) =
\begin{cases}
\cO \left( |u_0|_{BV}  \right), & \mbox{$\lambda>1$,}\\
\cO \left( |u_0|_{BV} \, \ln^2 \frac{\|u_0\|_{L^1}}{|u_0|_{BV}} \right), & \mbox{$\lambda=1$,}\\
\cO \left(\|u_0\|_{L^1}^{1-\lambda} \, 
|u_0|_{BV}^\lambda \left|\ln \frac{\|u_0\|_{L^1}}{|u_0|_{BV}}\right| \right), & \mbox{$\lambda<1$}.
\end{cases}
\end{equation*}
 We also exhibit
an example of an equation for which these estimates are optimal in the
regimes where $M$ is sufficiently small or
$\frac{\|u_0\|_{L^1}}{|u_0|_{BV}}$ is sufficiently large. 

Another natural question is whether $\alpha \mt u^\alp$ is
Lipschitz continuous 
up to the boundaries $\alpha=0$ and
$\alpha=2$. The answer is negative for $\alpha=0$
and remains open for $\alpha=2$. For the reader's convenience, more details and open questions are given at the end of Section \ref{sec:main-results}.

To conclude, note that even if we adapt some ideas from viscosity solution theory,
the definition of relevant generalized solution and the mathematical
arguments are very different from 
the ones in e.g. \cite{JaKa05}. Moreover we obtain optimal results here, and, in an a work in progress, 
we adapt ideas of this paper to obtain new results in the viscosity 
solution setting.

The rest of the paper is organized as follows. In
Section~\ref{nat:sec-preliminaries}, we recall the well-posedness
theory for fractional degenerate parabolic equations. In
Section~\ref{sec:main-results}, we state our main results: continuous
dependence with respect to the nonlinearities and
the order of the fractional Laplacian. In
Section~\ref{sec-remind}, we recall the general continuous dependence
estimates of~\cite{AlCiJa12} along with a general Kuznetsov
type of Lemma. Sections \ref{sec-sup}--\ref{sec:limit} are devoted to the
proofs of our main results. In Section~\ref{sec:optim}, we exhibit an
example of an equation for which we rigorously show that our estimates are optimal. Finally, there is an appendix containing
technical lemmas and computations from the different proofs.

\subsection*{Notation} The symbols $\nabla$ and $\nabla^2$ denote the
$x$-gradient and $x$-Hessian. The symbols $\|\cdot\|$ and~$|\cdot|$
are used for norms and semi-norms, respectively. The symbol $\sim$ is used for asymptotic equality ``up to a constant.''
The symbols $\wedge$ and $\vee$ are used for the minimum and maximum between two reals. For any $a,b \in \R$, we use the shorthand notation $\mbox{co}
\{a,b\}$ to design the interval $(a \wedge b,a \vee b)$. The surface measure of the unit sphere of~$\R^d$ is denoted by~$S_d$. 

\section{Preliminaries}\label{nat:sec-preliminaries}

In this section we recall some basic facts on the fractional Laplacian
and fractional degenerate parabolic equations. We start by a
L\'evy--Khinchine type representation formula. For $\alpha \in (0,2)$ and
all $\phi \in C^\infty_c(\R^d)$,  $x \in \R^d$, and $r>0$,
\begin{equation}\label{nat:Levy-form}
\begin{split}
-(-\triangle)^{\frac{\alpha}{2}} \phi(x) & = G_{d}(\alpha) \int_{|z| < r} \frac{\phi(x+z)-\phi(x)-\nabla \phi(x) \cdot z}{|z|^{d+\alpha}} \, \dif z\\
& \quad +G_{d}(\alpha) \int_{|z| > r} \frac{\phi(x+z)-\phi(x)}{|z|^{d+\alpha}} \, \dif z \\
& =: \Levy^\alpha_r[\phi](x)+\Levy^{\alpha,r}[\phi](x),
\end{split}
\end{equation}
where
\begin{equation*}
G_{d}(\alpha):=\frac{2^{\alpha-1} \, \alpha \, \Gamma \left(\frac{d+\alpha}{2}\right)}{\pi^{\frac{d}{2}} \Gamma \left(\frac{2-\alpha}{2} \right)}.
\end{equation*}
The result is standard, see e.g. \cite{Lan72,Imb05,DrIm06} and the references therein. Here are some properties on the coefficient that will be needed later:
\begin{equation}\label{properties-Gdalpha}
\begin{cases}
\smallskip
\mbox{$G_d(\alpha)>0$ is smooth (and analytic) with respect to $\alpha \in (0,2)$;}\\
\lim_{\alpha \da 0} \frac{S_d \, G_d(\alpha)}{\alpha}=1 \mbox{ and } \lim_{\alpha \ua 2} \frac{S_d \, G_d(\alpha)}{d \, (2-\alpha)}=1, 
\end{cases}
\end{equation}
where $S_d$ is the surface measure of the unit sphere of $\R^d$. 

We then proceed to define entropy solutions of \eqref{1}.
For each $k \in \R$, we consider the Kruzhkov \cite{Kru70} entropy $u \mt |u-k|$
and entropy flux
\begin{equation*}
u \mt q_f(u,k):=\sgn (u-k) \, (f(u)-f(k)),
\end{equation*}
where throughout this paper we always consider the following
everywhere representation of the sign function:
\begin{equation}
\label{nat:representation-sign}
\sgn(u) :=
\begin{cases}
\pm 1& \mbox{if~$ \pm u > 0$,}\\
0 & \mbox{if~$u=0$.}
\end{cases}
\end{equation}
By monotonicity \eqref{Aflux} of $\varphi$, 
\begin{equation}\label{nat:key}
\sgn (u-k) \, (\varphi(u)-\varphi(k)) = |\varphi(u)-\varphi(k)|,
\end{equation}
and then we formally deduce from \eqref{nat:Levy-form} that for any function $u=u(x,t)$,
$$
\sgn (u-k) \,(- (-\triangle)^{\frac{\alpha}{2}}) \, \varphi(u) \leq
\Levy^\alpha_r [|\varphi(u)-\varphi(k)|]+ \sgn (u-k) \,
\Levy^{\alpha,r}[\varphi(u)]. 
$$
This Kato type inequality is the starting point of the entropy formulation from~\cite{CiJa11}. 

\begin{definition}[Entropy solutions]
\label{L1-entropy}
Let~$\alpha \in (0,2)$,~$u_0 \in L^\infty \cap L^1(\R^d)$, and
\eqref{flux}--\eqref{Aflux} hold. We say that $u\in L^\infty(Q_T) \cap L^\infty \left(0,T;L^1\right)$ 
is an entropy solution of~\eqref{1} provided that for all~$k\in\mathbb{R}$,~$r>0$, and all nonnegative~$\phi\in C^\infty_c(\R^{d} \times [0,T))$,
\begin{equation}
\begin{split}\label{entropy_ineq}
& \int_{Q_{T}} \Big(|u-k|\,\partial_t\phi+q_{f}(u,k) \cdot \nabla \phi \Big) \, \dif x \, \dif t \\
& + \int_{Q_{T}} \Big(|\varphi(u)-\varphi(k)| \,\Levy^\alpha_r [\phi]+ \sgn (u-k)\,\Levy^{\alpha,r} [\varphi(u)]\,\phi
\Big) \, \dif x \, \dif t \\
& 
+\int_{\R^d}
|u_0(x)-k|\,\phi(x,0) \, \dif x\geq 0.
\end{split}
\end{equation}
\end{definition}

\begin{remark}\label{rem-def-cont}
Under our assumptions, the entropy solutions are continuous in
time with values in~$L^1(\R^d)$ (cf. Theorem \ref{tm:WP}
below). Hence we get an equivalent definition if we take $\phi\in
C^\infty_c(\R^{d+1})$ and add the term~$-\int_{\R^d} |u(x,T)-k|\,\phi(x,T)
\, \dif x $ to \eqref{entropy_ineq}; see \cite{CiJa11} for more details. 
\end{remark}

Here is the well-posedness result from~\cite{CiJa11}. 
\begin{theorem}\emph{(Well-posedness)}
\label{tm:WP}
Let~$\alpha \in (0,2)$,~$u_0 \in L^\infty \cap L^1(\R^d)$, and
\eqref{flux}--\eqref{Aflux} hold. Then there
exists a unique entropy solution~$u\in L^\infty(Q_T)\cap C
\left([0,T];L^1\right)$ of~\eqref{1}, satisfying
\begin{equation}\label{nat:nonincrease}
\begin{cases}
\essinf u_0 \leq u \leq \esssup u_0,\\
\|u\|_{C \left([0,T];L^1 \right)}\leq\|u_0\|_{L^1},\\
|u|_{L^\infty \left(0,T;BV\right)}\leq|u_0|_{BV}.
\end{cases}
\end{equation}
Moreover, if $v$ is an entropy solution of~\eqref{1}
with~$v(\cdot,0)=v_0(\cdot) \in L^\infty \cap L^1(\R^d)$, then
\begin{equation}\label{nat:L1-contraction}
\|u-v\|_{C([0,T];L^1)} \leq \|u_0-v_0\|_{L^1}.
\end{equation}
\end{theorem}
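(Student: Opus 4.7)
The plan is to establish the four claims — uniqueness, $L^1$-contraction, existence, and the a priori bounds \eqref{nat:nonincrease} — in essentially that order, following the Kruzhkov doubling-of-variables philosophy adapted to the nonlocal setting. Throughout, the monotonicity assumption \eqref{Aflux} via the key identity \eqref{nat:key} is what makes the nonlocal diffusion compatible with the entropy framework.

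\smallskip

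First I would prove \eqref{nat:L1-contraction}, which implies uniqueness. Let $u(x,t)$ and $v(y,s)$ be two entropy solutions. In the entropy inequality for $u$, I take $k=v(y,s)$ and a test function of the form $\phi(x,t,y,s)=\rho_\eta(x-y)\rho_\eta(t-s)\theta(x,t)$ where $\rho_\eta$ is a standard mollifier and $\theta$ a nonnegative cutoff; symmetrically for $v$ with $k=u(x,t)$. Adding the two inequalities and sending $\eta\downarrow0$, the local Kruzhkov parts are classical. For the nonlocal parts, the singular piece $\Levy^\alpha_r$ (a second-order-like operator in the test variable) can be transferred, by symmetry of the kernel $|z|^{-d-\alpha}$, onto the diagonalized $\theta$, producing an $\cO(r^{2-\alpha}\|\nabla^2\theta\|_\infty)$ error; the tail piece $\Levy^{\alpha,r}$ is globally bounded by $\cO(r^{-\alpha})$ and, after combining both halves on the diagonal and invoking \eqref{nat:key}, collapses into a nonpositive contribution. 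Sending $r\downarrow0$ yields the Kato-type inequality
$$
\int_{Q_T}\Big(|u-v|\,\partial_t\theta+q_f(u,v)\cdot\nabla\theta+|\varphi(u)-\varphi(v)|\,(-(-\triangle)^{\alpha/2}\theta)\Big)\,\dif x\,\dif t\ge\text{initial data terms},
$$
from which a standard cutoff in $x$ and Gronwall-type argument gives \eqref{nat:L1-contraction}.

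\smallskip

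Next I would prove existence by vanishing viscosity. Regularize \eqref{1} by adding $-\eps\triangle u^\eps$ on the left and smoothing $u_0$; classical nondegenerate parabolic theory produces smooth solutions $u^\eps$. The maximum principle yields the $L^\infty$ bound in \eqref{nat:nonincrease}, uniformly in $\eps$. Translation invariance in $x$, together with the $L^1$-contraction for the regularized problem (itself a consequence of the parabolic comparison principle), gives the uniform $BV$ bound. An $L^1$ time-equicontinuity estimate is obtained from the equation by testing against a mollified $\sgn$, using the $BV$-bound to control $\Div f(u^\eps)$ and the nonlocal diffusion. Kolmogorov's compactness theorem then produces a limit $u\in C([0,T];L^1)$, and multiplying the regularized equation by $\sgn(u^\eps-k)\phi$ and passing to the limit (using \eqref{nat:key} to make the diffusion term pass with the correct sign) shows $u$ satisfies \eqref{entropy_ineq}. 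The bounds in \eqref{nat:nonincrease} survive the limit; the $L^1$-bound is obtained by applying \eqref{nat:L1-contraction} with $v\equiv0$ (which is an entropy solution by \eqref{flux}--\eqref{Aflux}).

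\smallskip

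The main obstacle is really the nonlocal doubling of variables. Unlike the classical Kruzhkov argument, one cannot integrate by parts twice against the test function; the splitting $-(-\triangle)^{\alpha/2}=\Levy^\alpha_r+\Levy^{\alpha,r}$ and a delicate tracking of the order of limits $\eta\downarrow0$ and then $r\downarrow0$ are essential. In particular, the passage of $\Levy^\alpha_r$ from the variable $x$ (through $u$) to the variable $x-y$ (through the mollifier, then to the cutoff $\theta$) must be done carefully so that the leftover error vanishes; this is where the $L^\infty$-bound on $\varphi'$ on the range of $u_0$ and the precise form of the L\'evy--Khinchine representation \eqref{nat:Levy-form} are used. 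Once this nonlocal Kato inequality is in hand, the remaining steps are standard adaptations of the local theory.
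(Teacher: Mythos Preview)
The paper does not prove this theorem at all: it is quoted verbatim as the well-posedness result of \cite{CiJa11} and used as a black box throughout. Your sketch is therefore not comparable to any argument in the present paper, but it is a fair outline of the strategy actually carried out in \cite{CiJa11}: Kruzhkov doubling adapted to the nonlocal operator via the splitting $\Levy^\alpha_r+\Levy^{\alpha,r}$ for the $L^1$-contraction, and a viscosity/regularization procedure for existence together with the a priori bounds.

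At the level of a sketch your plan is sound, with one point worth sharpening. In the doubling step, the tail contributions do not simply ``collapse into a nonpositive contribution'' on their own; rather, after symmetrizing you obtain a term of the form
\[
\sgn(u(x,t)-v(y,s))\Big(\Levy^{\alpha,r}[\varphi(v(\cdot,t))](x)-\Levy^{\alpha,r}[\varphi(u(\cdot,s))](y)\Big)\phi,
\]
and it is only after a further manipulation (changing variables $z\mapsto -z$ in one piece and using \eqref{nat:key} on the combined integrand) that one reaches an expression bounded above by $\Levy^{\alpha,r}$ acting on $|\varphi(u)-\varphi(v)|$ times the diagonalized test function, which then merges with the $\Levy^\alpha_r$ part to reconstitute the full fractional Laplacian on $\theta$. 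This is precisely the computation behind Lemma~\ref{lem-kato} in the present paper (stated there for a more general displacement), and is the genuine nonlocal replacement for the local second-order integration by parts. Your identification of the limit order ($\eta\downarrow0$ before $r\downarrow0$) is correct and essential.
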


\section{The main results}\label{sec:main-results}

We state our main results in this section. They compare
the entropy solution $u$ of \eqref{1} to the entropy solution $v$ of
\begin{equation}\label{bb1}
\begin{cases}
\partial_t v+\Div g(v)+(-\triangle)^{\frac{\beta}{2}} \psi(v) =0,\\
v(\cdot,0)=v_0(\cdot),
\end{cases}
\end{equation}
under the assumptions that
\begin{equation}\label{set-assumptions}
\begin{cases}
\alpha,\beta \in (0,2),\\
\mbox{$u_0 \in L^\infty \cap L^1 \cap BV(\R^d)$,~$v_0 \in L^\infty \cap L^1(\R^d)$},\\
f,g \in \left(W_{\scriptstyle \text{loc}}^{1,\infty}(\R)\right)^d \text{ with } f(0)=0=g(0),\\
\varphi,\psi \in W_{\scriptstyle \text{loc}}^{1,\infty}(\R) \text{ are nondecreasing with } \varphi(0)=0=\psi(0).
\end{cases}
\end{equation}
From now on, we will use the shorthand notation
\begin{eqnarray*}
 \|f'-g'\|_{\infty} & := & \esssup_{I(u_0)} |f'-g'|,\\
 \|\varphi'-\psi'\|_{\infty} & := & \esssup_{I(u_0)} |\varphi'-\psi'|,
\end{eqnarray*}
where~$I(u_0):=(\essinf u_0, \esssup u_0)$. We 
will 
also define
\begin{equation}\label{log-entrop}
\Ent_i (u_0):=
|u_0|_{BV} \left\{1+\left(\ln \frac{\|u_0\|_{L^1}}{|u_0|_{BV}}\right)^i\right\} \mathbf{1}_{\frac{\|u_0\|_{L^1}}{|u_0|_{BV}}>1},
\end{equation}
with the convention that~$\Ent_i (u_0)=0$ if~$|u_0|_{BV}=0$
($i=1,2$). These quantities will appear when computing the optimal constants in our main estimates. Notice that we always have $0 \leq \Ent_i (u_0)\leq
\|u_0\|_{L^1}$.


Here is our first main result.
\begin{theorem}\emph{(Continuous dependence on 
the nonlinearities)}\label{th:nonlin} 
Let \eqref{set-assumptions} hold with $\alpha=\beta$, and let $u$ and
$v$ be the entropy solutions of \eqref{1} and \eqref{bb1}
respectively. Then we have
\begin{equation}\label{main-esti-first}
\begin{split}
\|u-v\|_{C\left([0,T];L^{1}\right)} 
\leq \|u_0-v_0\|_{L^{1}}
+T \, |u_0|_{BV} \, \|f'-g'\|_{\infty}
+C \, \mathcal{E}^{\varphi-\psi}_{T,\alpha,u_0},
\end{split}
\end{equation}
with~$C=C(d,\alpha)$ and 
\begin{equation}\label{main-constants-first}
\begin{split}
& \mathcal{E}^{\varphi-\psi}_{T,\alpha,u_0} = \begin{cases}
\bigskip
T^{\frac{1}{\alpha}} \, |u_0|_{BV} \, \|(\varphi')^\frac{1}{\alpha}-(\psi')^\frac{1}{\alpha} \|_{\infty},  & \alpha\in(1,2),\\
\medskip
T \, \Ent_1(u_0) \, \|\varphi'-\psi'\|_{\infty} \\
\qquad + T \,(1+ |\ln T|) \, |u_0|_{BV} \, \|\varphi'-\psi'\|_{\infty}
\medskip\\
\bigskip
\qquad \qquad +
T   \, |u_0|_{BV} 
\, \| \varphi' \, \ln \varphi'-\psi' \, \ln \psi' \|_{\infty},  & \alpha=1,\\
T \, \|u_0\|_{L^1}^{1-\alpha} \, |u_0|_{BV}^{\alpha} \, \|\varphi'-\psi'\|_{\infty}, & \alpha \in(0,1).
\end{cases}
\end{split}
\end{equation}
\end{theorem}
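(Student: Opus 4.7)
The plan is to derive the estimate from the general Kuznetsov-type inequality established in Section~\ref{sec-remind} (coming from~\cite{AlCiJa12}), which is obtained by Kruzhkov's doubling of variables applied to the entropy inequality \eqref{entropy_ineq} and to its analogue for $v$. That inequality bounds $\|u(\cdot,t)-v(\cdot,t)\|_{L^1}$ by the initial data discrepancy $\|u_0-v_0\|_{L^1}$, a hyperbolic (flux) remainder, and a diffusion remainder. Using the uniform $BV$ bound from Theorem~\ref{tm:WP} and the standard Kruzhkov computation, the flux remainder is immediately controlled by $T\,|u_0|_{BV}\,\|f'-g'\|_\infty$; the real work is to bound the diffusion remainder by $C\,\mathcal{E}^{\varphi-\psi}_{T,\alpha,u_0}$.

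For the diffusion remainder I would combine two ingredients. First, a linearization \emph{à la} kinetic/Young measure: writing $\varphi(\sigma)-\psi(\sigma)=\int_0^\sigma(\varphi'-\psi')(s)\,\dif s$ allows one to rewrite the relevant integrand $\sgn(u-v)\bigl((\varphi(u)-\psi(u))-(\varphi(v)-\psi(v))\bigr)$ as a weighted average of one-sided sign terms, reducing the nonlinear comparison to a family of \emph{linear} diffusion comparisons whose coefficient is governed by $|\varphi'-\psi'|$. Second, the Lévy--Khinchine splitting \eqref{nat:Levy-form} at a cutoff $r>0$: on $\{|z|<r\}$ Taylor expansion converts the $\Levy^\alpha_r$-terms into second differences of the doubling test kernel, giving a factor of order $r^{2-\alpha}$; on $\{|z|\geq r\}$ first differences of $u$ remain and are bounded either by $|z|\,|u|_{BV}$ (using Theorem~\ref{tm:WP}) or by $2\|u\|_{L^1}$, producing factors of order $r^{1-\alpha}\,|u_0|_{BV}$ or $r^{-\alpha}\,\|u_0\|_{L^1}$ respectively.

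The final step is to optimize the splitting radius, in tandem with the change of jump variable $z=\lambda w$ in \eqref{nat:Levy-form}, which absorbs powers of $\|\varphi'-\psi'\|_\infty$ into the Lévy kernel via its homogeneity factor $\lambda^{-\alpha}$. In the supercritical regime $\alpha<1$, after sending the doubling parameter to zero only the far field survives and an interpolation between $\|u_0\|_{L^1}$ and $|u_0|_{BV}$ yields $T\,\|u_0\|_{L^1}^{1-\alpha}\,|u_0|_{BV}^{\alpha}\,\|\varphi'-\psi'\|_\infty$. In the subcritical regime $\alpha>1$, the choice $\lambda\sim\|\varphi'-\psi'\|_\infty^{1/\alpha}$ balances the near- and far-field contributions and, combined with the linearization, produces the homogeneous bound $T^{1/\alpha}\,|u_0|_{BV}\,\|(\varphi')^{1/\alpha}-(\psi')^{1/\alpha}\|_\infty$. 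I expect the main obstacle to be the critical case $\alpha=1$: there the powers $r^{2-\alpha}$ and $r^{-\alpha}$ become reciprocal and the $r$-optimization produces logarithmic divergences that must be tamed by a more delicate \emph{double} splitting of the Lévy integral at two different radii, together with a careful interpolation that accounts simultaneously for the time-logarithm $1+|\ln T|$ and for the entropy quantity $\Ent_1(u_0)$. This is what should allow all three borderline contributions in the $\alpha=1$ line of \eqref{main-constants-first}, including the logarithmic nonlinearity $\|\varphi'\,\ln\varphi'-\psi'\,\ln\psi'\|_\infty$, to come out with the correct constants.
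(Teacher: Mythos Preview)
Your overall skeleton (Kuznetsov inequality, flux term handled as usual, diffusion remainder treated by linearization plus L\'evy splitting and optimization) matches the paper, and your treatment of the supercritical case $\alpha<1$ is essentially what the paper does in Section~\ref{sec-sup}. However, for $\alpha\geq 1$ two of your concrete mechanisms are off, and as described they would only recover the \emph{suboptimal} modulus of Theorem~\ref{th:nonlin-o} (cf.\ Remark~5.2), not the optimal one in~\eqref{main-constants-first}.

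First, the linearization is not carried out on the \emph{difference} $\varphi-\psi$. The diffusion remainder is $\int\sgn(v-u)\bigl(\Levy^{\alpha,r}[\psi(v)]-\Levy^{\alpha,r}[\varphi(u)]\bigr)\phi$, and the paper decomposes each increment separately via the kinetic function~\eqref{si}, i.e.\ $\psi(v(x+z))-\psi(v)=\int\chi_{v}^{v(x+z)}(\xi)\,\psi'(\xi)\,\dif\xi$ and likewise for $\varphi(u)$. After a regularization (the map $\Omega_\xi$) this \emph{freezes the level} $\xi$ and reduces the problem, for each fixed $\xi$, to a genuinely \emph{linear} diffusion comparison with coefficients $a=\varphi'(\xi)$ and $b=\psi'(\xi)$. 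Your decomposition $\varphi(\sigma)-\psi(\sigma)=\int_0^\sigma(\varphi'-\psi')$ does not achieve this: it leaves the nonlinear functions $\varphi,\psi$ acting on both $u$ and $v$, and there is no way to pull out the pointwise quantity $|(\varphi'(\xi))^{1/\alpha}-(\psi'(\xi))^{1/\alpha}|$ from it.

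Second, and more importantly, the change of variable is not a \emph{single} scaling $z=\lambda w$ with $\lambda\sim\|\varphi'-\psi'\|_\infty^{1/\alpha}$. After freezing $a,b$, the paper applies \emph{two different} scalings, $z\mapsto a^{1/\alpha}z$ to the $u$-term and $z\mapsto b^{1/\alpha}z$ to the $v$-term. By homogeneity this puts both integrals against the \emph{same} L\'evy kernel $|z|^{-d-\alpha}$, and the discrepancy is transferred entirely to the translations: one now compares $u(y+a^{1/\alpha}z)$ with $v(x+b^{1/\alpha}z)$. The Kato-type Lemma~\ref{lem-kato} then bounds this by terms involving the displacement $h(z)=(b^{1/\alpha}-a^{1/\alpha})z$, so that $|a^{1/\alpha}-b^{1/\alpha}|$ appears \emph{directly}, and after the $\xi$-integration one gets $\|(\varphi')^{1/\alpha}-(\psi')^{1/\alpha}\|_\infty$. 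A single scaling by the difference would instead leave a factor $|a-b|$ multiplied by negative powers of the cutoff, and optimization would only yield $\|\varphi'-\psi'\|_\infty^{1/\alpha}$---exactly the modulus the paper shows is strictly worse. The double splitting at radii $r_1,r_2$ you anticipate for $\alpha=1$ is indeed needed (with $r_1=T\wedge 1$, $r_2$ involving $\|u_0\|_{L^1}/|u_0|_{BV}$), but it operates on top of this two-scaling device, not as a substitute for it.
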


The proof of this result can be found in Sections \ref{sec-sup} and
\ref{sec-sub}. 

\begin{remark}
We emphasize that 
this result 
is 
optimal with respect to the
modulus in $\varphi$. In the regimes where $T$ is 
sufficiently small or $\frac{\|u_0\|_{L^1}}{|u_0|_{BV}}$ is
sufficiently large, it is also optimal
with respect to the dependence of $T$ and $u_0$. See 
the 
discussion
of 
Section 
\ref{sec:optim}  for more details. In particular, see Proposition~\ref{prop:opt-nonlin} and
Remark~\ref{rem-opt-nonlin}.
\end{remark}

Note that our result is robust in the sense that the
  constant~$C=C(d,\alpha)$ in Theorem \ref{th:nonlin} has finite
  limits 
as~$\alpha \da 0$ or $\alp\ua2$. This will be seen during the proof,
cf. Remarks \ref{rem-pal1}\eqref{pal1} and
\ref{rem-l2}\eqref{item-rem-l2}. Hence, we can recover
the known 
continuous dependence 
estimates of the limiting cases $\alp=0$ and
$\alp=2$ (cf.~\eqref{intro-eol}), i.e. for Equations  \eqref{nat:scl} and
\eqref{nat:local-eq}.  

To show this we start by identifying the limits of the solutions $u^\alp$ of
\eqref{1} as $\alp\da0$ and $\alp\ua2$.

\begin{theorem}[Limiting equations]\label{thm:limit}
Let~$u_0 \in L^\infty \cap L^1(\R^d)$,~\eqref{flux}--\eqref{Aflux} hold, and for each~$\alpha \in (0,2)$, let $u^\alp$ denote the entropy solution
of \eqref{1}. Then~$u^\alpha$ converges in $C([0,T];L^1_{\mathrm{loc}})$, as~$\alp \da 0$ (resp.~$\alp \ua 2$), to the unique entropy solution~$u \in L^\infty(Q_T) \cap C([0,T];L^1)$ of 
\eqref{nat:scl} (resp.~\eqref{nat:local-eq}) with initial condition
$u_0$. 
\end{theorem}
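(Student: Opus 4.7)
The plan is to combine uniform-in-$\alpha$ compactness of the family $\{u^\alpha\}$ with a passage to the limit in the entropy inequality \eqref{entropy_ineq}, closing via uniqueness of the limiting solution. By the $L^1$-contraction \eqref{nat:L1-contraction}, applied to \eqref{1} and to the limiting equations \eqref{nat:scl} and \eqref{nat:local-eq}, a density argument first reduces matters to the case $u_0 \in L^\infty \cap L^1 \cap BV(\R^d)$. Under this assumption \eqref{nat:nonincrease} gives uniform bounds on $\|u^\alpha\|_{L^\infty(Q_T)}$, $\|u^\alpha\|_{L^\infty(0,T;L^1)}$ and $|u^\alpha|_{L^\infty(0,T;BV)}$, whence spatial compactness. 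For time-equicontinuity uniform in $\alpha$ near the limit, I would test the weak form of the equation against $\phi \in C^\infty_c(\R^d)$ and use
\[
\left|\int_{\R^d}\bigl(u^\alpha(\cdot,t)-u^\alpha(\cdot,s)\bigr)\phi\,\dif x\right| \leq |t-s|\Bigl(\|f(u^\alpha)\|_\infty\|\nabla\phi\|_{L^1} + \|\varphi(u^\alpha)\|_\infty\|(-\triangle)^{\alpha/2}\phi\|_{L^1}\Bigr),
\]
noting that $\|(-\triangle)^{\alpha/2}\phi\|_{L^1}$ stays bounded for $\alpha$ in a neighborhood of the target value (a direct consequence of \eqref{nat:Levy-form} and \eqref{properties-Gdalpha}). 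A Kolmogorov--Riesz argument then yields precompactness of $\{u^\alpha\}$ in $C([0,T];L^1_{\mathrm{loc}})$.

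For any limit point $u$ along some $\alpha_n\to 0$ or $\alpha_n\to 2$, I would pass to the limit in \eqref{entropy_ineq} using the splitting \eqref{nat:Levy-form} with $r>0$ fixed. The hyperbolic contributions and the initial-datum term pass by dominated convergence. For $\alpha_n\downarrow 0$: the small-jump piece $\Levy^{\alpha_n}_r[\phi]$ is $\cO\bigl(G_d(\alpha_n)\|\nabla^2\phi\|_\infty r^{2-\alpha_n}/(2-\alpha_n)\bigr)$ and vanishes since $G_d(\alpha_n)\to 0$; splitting the large-jump piece as
\[
\Levy^{\alpha_n,r}[\varphi(u^{\alpha_n})](x) = G_d(\alpha_n)\!\!\int_{|z|>r}\!\!\frac{\varphi(u^{\alpha_n}(x+z))}{|z|^{d+\alpha_n}}\,\dif z \;-\; \varphi(u^{\alpha_n}(x))\,\frac{S_d\,G_d(\alpha_n)\,r^{-\alpha_n}}{\alpha_n},
\]
the first piece tends to $0$ (since $\varphi(u^{\alpha_n})\in L^1$ and $G_d(\alpha_n)\to 0$) while the second tends to $-\varphi(u(x))$ by \eqref{properties-Gdalpha}. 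The resulting limit is the Kruzhkov entropy inequality for the conservation law with source \eqref{nat:scl}. For $\alpha_n\uparrow 2$: the large-jump part vanishes again by $G_d(\alpha_n)\to 0$, and writing $\Levy^{\alpha_n}_r[\phi] = -(-\triangle)^{\alpha_n/2}\phi - \Levy^{\alpha_n,r}[\phi]$ together with the Fourier-level continuity $(-\triangle)^{\alpha/2}\phi \to -\triangle\phi$ (dominated convergence in \eqref{referee-fourier-def}) shows $\Levy^{\alpha_n}_r[\phi]\to \triangle\phi$. The Kato identity \eqref{nat:key} then recovers Carrillo's entropy inequality for \eqref{nat:local-eq}.

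Uniqueness for the limiting equations (Kruzhkov \cite{Kru70} for \eqref{nat:scl}, Carrillo \cite{Car99} for \eqref{nat:local-eq}) implies that every convergent subsequence produces the same limit $u$, so the whole family $u^\alpha$ converges as claimed. I expect the main obstacle to be the careful passage to the limit in the nonlocal terms in the singular regime $\alpha\to 0$, where the nonlocal operator degenerates into the algebraic source $\varphi(u)$ and the precise asymptotics \eqref{properties-Gdalpha} play a decisive role; the compactness and uniqueness steps are otherwise standard.
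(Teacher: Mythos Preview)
Your overall strategy---compactness, passage to the limit in the entropy inequalities, then uniqueness---matches the paper's. The compactness step is fine (the paper uses Corollary~\ref{th:time} for equicontinuity rather than your weak-time-derivative argument, but either works). However, there is one genuine gap and one smaller issue.

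\textbf{The main gap: $H^1$ regularity for $\alpha\uparrow 2$.} Carrillo's notion of entropy solution for \eqref{nat:local-eq} (Definition~\ref{edef}(2) in the paper) requires \emph{both} the entropy inequality \emph{and} the regularity $\varphi(u)\in L^2(0,T;H^1)$. You establish only the former. Without the $H^1$ condition you cannot invoke the uniqueness theory of \cite{Car99,KaRi03}, so the identification of the limit fails. The paper addresses this with a nontrivial energy estimate (Lemma~\ref{Clem2}): testing the equation with the entropy $\Phi(u)=\int_0^u\varphi$ yields the uniform bound
\[
|\varphi(u^\alpha)|^{2}_{L^2(0,T;H^{\alpha/2})}\leq \int_{\R^d}\Phi(u_0)\,\dif x,
\]
and a Fourier-side argument using the precise normalization in \eqref{referee-def-norm} then shows that the limit lies in $L^2(0,T;H^1)$. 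This step is not standard and is the most substantial piece you are missing.

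\textbf{A smaller issue for $\alpha\downarrow 0$.} In passing to the limit in \eqref{entropy_ineq}, the term $\sgn(u^{\alpha_n}-k)\,\varphi(u^{\alpha_n})$ need not converge pointwise to $\sgn(u-k)\,\varphi(u)$ on the set $\{u=k\}$ (the sign function is discontinuous). The paper sidesteps this by first reformulating the entropy inequality with $C^1$ convex entropies $\eta$ (Lemma~\ref{lem01}), so that $\eta'(u^{\alpha_n})\to\eta'(u)$ everywhere and dominated convergence applies cleanly. Your argument can be repaired (e.g.\ argue for a.e.\ $k$ and pass to all $k$ by continuity), but as written it glosses over a point the paper handles explicitly.
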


Let us recall that under our assumptions there are unique entropy
solutions of~\eqref{nat:scl} and~\eqref{nat:local-eq} with initial
data $u_0$; cf.~\cite{Kru70,Car99,KaRi03}.  The proof of
Theorem~\ref{thm:limit} can be found in Section \ref{sec:limit}, as
well as the definitions of entropy solutions of
\cite{Kru70,Car99,KaRi03}. 

Now we prove that the estimates hold in the limiting cases $\alp=0$ and $\alp=2$.

\begin{corollary}[Limiting estimates]
Theorem \ref{th:nonlin} holds with
$\alp\in [0,2]$.
\end{corollary}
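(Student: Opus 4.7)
The strategy is a straightforward approximation argument combining Theorem~\ref{th:nonlin}, the limiting statement in Theorem~\ref{thm:limit}, and the robustness of the constant $C(d,\alpha)$ at the endpoints $\alpha=0$ and $\alpha=2$. Fix $\alpha\in\{0,2\}$ and data $(u_0,f,\varphi,v_0,g,\psi)$ as in \eqref{set-assumptions}. Choose a sequence $\alpha_n\in(0,2)$ with $\alpha_n\to\alpha$, and let $u^{\alpha_n}$, $v^{\alpha_n}$ be the entropy solutions of \eqref{1} and \eqref{bb1} at level $\alpha=\beta=\alpha_n$. Applying Theorem~\ref{th:nonlin} yields
\[
\|u^{\alpha_n}-v^{\alpha_n}\|_{C([0,T];L^1)}\leq \|u_0-v_0\|_{L^1}+T\,|u_0|_{BV}\,\|f'-g'\|_\infty+C(d,\alpha_n)\,\mathcal{E}^{\varphi-\psi}_{T,\alpha_n,u_0}.
\]
The plan is to pass to the limit $n\to\infty$ on each side separately.

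For the right-hand side, I would invoke the robustness of the constants: by the Remarks~\ref{rem-pal1}\eqref{pal1}--\ref{rem-l2}\eqref{item-rem-l2} following Theorem~\ref{th:nonlin}, together with the asymptotics \eqref{properties-Gdalpha} of $G_d(\alpha)$, the constant $C(d,\alpha_n)$ admits finite limits as $\alpha_n\downarrow0$ and $\alpha_n\uparrow2$. The explicit expression $\mathcal{E}^{\varphi-\psi}_{T,\alpha_n,u_0}$ from \eqref{main-constants-first} is continuous in $\alpha_n$ at the endpoints: the subcritical branch yields $T^{1/2}\,|u_0|_{BV}\,\|\sqrt{\varphi'}-\sqrt{\psi'}\|_\infty$ as $\alpha_n\uparrow2$ (recovering \eqref{intro-eol}), while the supercritical branch yields $T\,\|u_0\|_{L^1}\,\|\varphi'-\psi'\|_\infty$ as $\alpha_n\downarrow0$ (the expected Lipschitz dependence on the reaction term $\varphi(u)$ in \eqref{nat:scl}).

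For the left-hand side, Theorem~\ref{thm:limit} gives convergence of $u^{\alpha_n}\to u$ and $v^{\alpha_n}\to v$ in $C([0,T];L^1_{\mathrm{loc}})$, where $u,v$ are the unique entropy solutions of \eqref{nat:scl} (if $\alpha=0$) or \eqref{nat:local-eq} (if $\alpha=2$) with data $u_0,v_0$. The convergence is only local in $x$, but by Fatou's lemma applied on balls $B_R$, for each fixed $t\in[0,T]$,
\[
\int_{B_R}|u(x,t)-v(x,t)|\,\dif x\leq \liminf_{n}\int_{B_R}|u^{\alpha_n}(x,t)-v^{\alpha_n}(x,t)|\,\dif x\leq\liminf_n\|u^{\alpha_n}-v^{\alpha_n}\|_{C([0,T];L^1)}.
\]
Letting $R\to\infty$ by monotone convergence, then taking $\sup_{t\in[0,T]}$ and using $\sup_t\liminf_n\leq\liminf_n\sup_t$, yields
\[
\|u-v\|_{C([0,T];L^1)}\leq\liminf_n\|u^{\alpha_n}-v^{\alpha_n}\|_{C([0,T];L^1)}.
\]
Combining this with the limit of the right-hand side concludes the proof.

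The only genuine obstacle is the robustness claim, namely that every constant produced in the proof of Theorem~\ref{th:nonlin} stays bounded as $\alpha\downarrow0$ or $\alpha\uparrow2$; this is precisely what the remarks following the theorem announce, and once it is granted, the argument above is purely routine. A small bookkeeping point worth checking is that the pointwise limit of $\mathcal{E}^{\varphi-\psi}_{T,\alpha_n,u_0}$ as $\alpha_n\uparrow2$ or $\alpha_n\downarrow0$ is indeed continuous across the boundary; this follows from the continuity of $s\mapsto(\varphi')^{s}$ on bounded ranges and the elementary identity $\lim_{\alpha\to0}\|u_0\|_{L^1}^{1-\alpha}|u_0|_{BV}^\alpha=\|u_0\|_{L^1}$.
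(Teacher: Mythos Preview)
Your proposal is correct and follows essentially the same approach as the paper: approximate by $\alpha_n\in(0,2)$, apply Theorem~\ref{th:nonlin} at level $\alpha_n$, and pass to the limit using Theorem~\ref{thm:limit} together with the robustness of $C(d,\alpha)$ at the endpoints. The only cosmetic difference is that the paper handles the local-to-global step via the triangle decomposition $u-v=(u-u^{\alpha_n})+(u^{\alpha_n}-v^{\alpha_n})+(v^{\alpha_n}-v)$ directly in the $C([0,T];L^1(B_R))$ norm, whereas you work pointwise in $t$ and invoke Fatou (which is in fact not needed, since $C([0,T];L^1_{\mathrm{loc}})$ convergence already gives equality of the integrals over $B_R$).
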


\begin{proof}
We only do the proof for~$\alpha=2$, the case~$\alpha=0$ being
similar. Let~$u$ and~$v$ denote the entropy solutions of~\eqref{1}
and~\eqref{bb1} with~$\alpha=2$ respectively.
Moreover, for each~$\alpha \in (0,2)$, we denote by $u^\alpha$ and~$v^\alpha$ the entropy solutions of~\eqref{1} and~\eqref{bb1} respectively, and~$\mathcal{E}(\alp)$  the right-hand side of
\eqref{main-esti-first}. Then 
$$
u-v=(u-u^\alp)+(u^\alp-v^\alp)+(v^\alp-v),
$$ 
and the triangle inequality
and Theorems  \ref{th:nonlin} and \ref{thm:limit} imply that for all~$R>0$,
\begin{align*}
\|(u-v) \, \mathbf{1}_{|x|<R}\|_{C([0,T];L^1)}\leq
{\co} (1)+\mathcal{E}(\alp)+{\co} (1) 
\end{align*}
as~$\alp \ua 2$ and $R$ is fixed.
By the monotone convergence theorem, Remark~\ref{rem-l2}\eqref{item-rem-l2}, and $\alp$-continuity of
$\mathcal{E}^{\varphi-\psi}_{T,\alpha,u_0}$ at~$\alpha=2$, the result
follows by first sending $\alp\ua 2$ and then sending $R\ra+\infty$.
\end{proof}

\begin{remark}
By our results for $\alp=2$, we get back the 
modulus
of
\cite{CoGr99},
$$\mathcal{E}^{\varphi-\psi}_{T,\alp=2,u_0} =
\sqrt{T} \, |u_0|_{BV} \, \|\sqrt{\varphi'}-\sqrt{\psi'}\|_{\infty}.$$
Our approach also gives an alternative 
proof of
this result.
\end{remark}

Optimal time regularity for \eqref{1} is another corollary of Theorem \ref{thm:limit}.

\begin{corollary}[Modulus of continuity in time]\label{th:time} 
Let~$\alpha \in [0,2]$ and~\eqref{u0}--\eqref{Aflux} hold. Let $u$ be the entropy solution of \eqref{1}. Then for all $t,s \geq 0$, 
\begin{equation}\label{esti-time}
\begin{split}
\|u(\cdot,t)-u(\cdot,s)\|_{L^{1}} 
\leq 
|u_0|_{BV} \, \|f'\|_{\infty} \, |t-s|  
+C \, \mathcal{E}^{t-s}_{\alpha,u_0,\varphi},
\end{split}
\end{equation}
with~$C=C(d,\alpha)$, 
\begin{equation*}
\begin{split}
& \mathcal{E}^{t-s}_{\alpha,u_0,\varphi} = \begin{cases}
\bigskip
|u_0|_{BV} \, \|(\varphi')^{\frac{1}{\alpha}}\|_\infty \, |t^{\frac{1}{\alpha}}-s^{\frac{1}{\alpha}}|,  & \alpha\in(1,2],\\
\medskip
\Ent_1 (u_0) \, \|\varphi'\|_\infty \, |t-s|\\
\medskip
\qquad + |u_0|_{BV} \, \|\varphi'\|_\infty \, (1+ \|\ln \varphi'\|_\infty) \, |t-s|\\ 
\bigskip
\qquad \qquad +
|u_0|_{BV} \, \|\varphi'\|_\infty \, | t \, \ln t-s \, \ln s|,  & \alpha=1,\\
\|u_0\|_{L^1}^{1-\alpha} \, |u_0|_{BV}^{\alpha} \, \|\varphi'\|_\infty \, |t-s|, & \alpha \in[0,1),
\end{cases}
\end{split}
\end{equation*}
and where~$\Ent_1 (u_0)$ is defined in~\eqref{log-entrop}.
\end{corollary}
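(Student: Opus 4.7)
The plan is to deduce the estimate from Theorem \ref{th:nonlin} (in its $\alpha \in [0,2]$ version established above) by a time-rescaling trick, as already hinted in the introduction. The key observation is that for any $c \geq 0$, the function $u^{(c)}(x, \tau) := u(x, c\tau)$ is the unique entropy solution of
\[
\partial_\tau u^{(c)} + \Div (c f)(u^{(c)}) + (-\triangle)^{\alpha/2} (c \varphi)(u^{(c)}) = 0, \qquad u^{(c)}(\cdot, 0) = u_0.
\]
This follows by the change of variable $t = c \tau$ directly in the entropy inequality \eqref{entropy_ineq}, using $q_{cf} = c \, q_f$ and the linearity of the L\'evy operators in their argument; for $c = 0$ the statement reduces to the trivial admissible solution $u^{(0)} \equiv u_0$ of the equation with zero flux and zero diffusion.

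Fix $t \geq s \geq 0$ (WLOG by symmetry) and apply Theorem \ref{th:nonlin} with time horizon $T = 1$, comparing $u^{(t)}$ and $u^{(s)}$. These solve equations of the types \eqref{1} and \eqref{bb1} with the same order $\alpha$, the same initial datum $u_0 \in L^\infty \cap L^1 \cap BV$, convective fluxes $(tf, sf)$ and diffusion functions $(t\varphi, s\varphi)$, all fulfilling \eqref{set-assumptions}. At $\tau = 1$ we have $u^{(t)}(\cdot, 1) = u(\cdot, t)$ and $u^{(s)}(\cdot, 1) = u(\cdot, s)$, so the left-hand side of Theorem \ref{th:nonlin}'s estimate specializes to $\|u(\cdot, t) - u(\cdot, s)\|_{L^1}$.

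It then suffices to simplify the right-hand side of \eqref{main-esti-first} with $T = 1$. The initial-data term vanishes; the convection term becomes $|u_0|_{BV} \|t f' - s f'\|_\infty = |u_0|_{BV} \|f'\|_\infty |t-s|$, matching the first term of \eqref{esti-time}. For the diffusion modulus $\mathcal{E}^{t\varphi - s\varphi}_{1, \alpha, u_0}$, the supercritical case $\alpha > 1$ yields $|u_0|_{BV} |t^{1/\alpha} - s^{1/\alpha}| \, \|(\varphi')^{1/\alpha}\|_\infty$ and the subcritical case $\alpha < 1$ yields $\|u_0\|_{L^1}^{1-\alpha} |u_0|_{BV}^{\alpha} |t-s| \, \|\varphi'\|_\infty$, both immediately. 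The critical case $\alpha = 1$ follows from the algebraic identity
\[
t \varphi' \ln(t \varphi') - s \varphi' \ln(s \varphi') = (t-s) \, \varphi' \ln \varphi' + (t \ln t - s \ln s) \, \varphi',
\]
combined with the pointwise bound $|\varphi' \ln \varphi'| \leq \|\varphi'\|_\infty |\ln \varphi'|$, which rearrange exactly into the three sub-terms of $\mathcal{E}^{t-s}_{\alpha = 1, u_0, \varphi}$.

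The main subtlety is the normalization choice $T = 1$. For any other value of $T$, the critical case $\alpha = 1$ would produce a spurious $|\ln T|$ from the $T(1 + |\ln T|)$ factor in \eqref{main-constants-first}, because that logarithmic correction is not homogeneous in $T$; the value $T = 1$ is exactly what makes the time rescaling cancel cleanly. No other obstacles are expected: the endpoints $\alpha \in \{0, 2\}$ are already handled by the extension of Theorem \ref{th:nonlin} proved above, and the degenerate cases $s = 0$ or $s = t$ are immediate from the discussion of $u^{(0)}$.
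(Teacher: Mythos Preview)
Your proof is correct and follows essentially the same approach as the paper: introduce the rescaled solutions $u(x,t\tau)$ and $u(x,s\tau)$, observe they solve \eqref{1} with data $(u_0, tf, t\varphi)$ and $(u_0, sf, s\varphi)$, and apply Theorem~\ref{th:nonlin} (in its $\alpha\in[0,2]$ form) at $\tau=1$. You spell out the algebraic simplification for $\alpha=1$ in more detail than the paper, which simply says the result ``immediately follows,'' but the argument is the same.
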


\begin{remark}
\label{remtime}
This result is optimal with respect to the modulus in time, and also with
respect to the dependence of $\varphi$ and $u_0$ in the regimes where
$\|\varphi'\|_\infty$ is sufficiently small or the ratio
$\frac{\|u_0\|_{L^1}}{|u_0|_{BV}}$ is 
sufficiently large, cf. Remark \ref{rem_optt}. 
The result improves earlier results by the two
last authors in \cite{CiJapr} where the modulus was given as
$$
\mathcal{E}^{t-s}_{\alpha,u_0,\varphi}=C(\alpha,u_0,\varphi)\left\{ 
\begin{array}{ll}
|t-s|^{\frac1{\alp}},&\alp> 1,\\
|t-s| \left(1+|\ln|t-s||\right),&\alp=1,\\
|t-s|,&\alp< 1.
\end{array}
\right.$$ 
The optimal new results give a strictly better 
modulus of continuity 
when
$\alp\in[1,2]$ at the initial time\footnote{Since $\liminf_{t,s \da 0} \frac{|t^\frac{1}{\alpha}-s^\frac{1}{\alpha}|}{|t-s|^{\frac1{\alp}}}=0=\liminf_{t,s \da 0} \frac{|t \, \ln t -s \, \ln
s|}{|t-s| \, |\ln |t-s||}$ (take 
$t_n,s_n
\da 0$ and $\frac{t_n}{s_n} \ra 1$).} and for positive times
$u \in  W_{\scriptstyle   \text{loc}}^{1,\infty}((0,+\infty];L^1)$. The Lipschitz in time result is a 
regularizing effect when the solution is no more than $BV$ initially.
\end{remark}

\begin{proof}
We fix~$t,s>0$ and introduce the rescaled solutions~$v(x,\tau):=u(x,t \, \tau)$ and~$w(x,\tau):=u(x,s \, \tau)$. These are solutions of~\eqref{1} with initial data~$u_0$, new respective fluxes~$t \, f$ and~$s \, f$, and new respective diffusion functions~$t \, \varphi$ and~$s \, \varphi$. The result immediately follows from the preceding corollary applied at time~$\tau=1$.
\end{proof}

Next we consider the continuous dependence on~$\alpha$. 
Given
$\lambda \in (0,2)$, we 
define ``the best Lipschitz constant'' of $\alpha \mt u^\alpha$ at the position $\alpha=\lambda$ as follows:
\begin{equation}
\label{nat:lip}
\Lip_\alpha(u;\lambda):= \limsup_{\alpha,\beta \ra
 \lambda} \frac{\|u^\alpha-u^\beta\|_{C([0,T];L^1)}}{|\alpha-\beta|},
\end{equation}
where~$u^\alpha$ denotes the unique entropy solution of~\eqref{1}. 

\begin{theorem}\emph{(Lipschitz continuity in~$\alpha$)}\label{nat:th-Levy}
Let $\lambda \in (0,2)$ and~\eqref{u0}--\eqref{Aflux} hold. Then
\begin{equation}\label{main-esti-second}
\Lip_\alpha(u;\lambda) \leq 
C \, \begin{cases}
\bigskip
M^{\frac{1}{\lambda}} \, (1 + |\ln M|) \,
|u_0|_{BV},  & 
\lambda \in(1,2),\\
\medskip
M \Ent_2 (u_0) +M \, (1 + \ln^2 M) \, |u_0|_{BV}, & 
\lambda = 1,
\bigskip
\\
M  \, \|u_0\|_{L^1}^{1-\lambda} \, |u_0|_{BV}^{\lambda}  \left( 1 + \left|\ln \frac{\|u_0\|_{L^1}}{|u_0|_{BV}} \right|\right), & 
\lambda\in(0,1),\\
\end{cases}
\end{equation}
where~$C=C(d,\lambda)$,~$M:=T \, \|\varphi'\|_\infty$ and~$\Ent_2 (u_0)$ is defined in~\eqref{log-entrop}. In particular, the function $\alpha \in (0,2) \mt u^\alpha \in C([0,T];L^1)$ is locally Lipschitz continuous. 
\end{theorem}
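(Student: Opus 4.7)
The plan is to apply the general Kuznetsov-type continuous dependence estimate recalled in Section~\ref{sec-remind} to the pair $u^\alpha$, $u^\beta$ (same data $u_0,f,\varphi$ but different exponents), and then exploit a clever change of the jump variable $z$ in the L\'evy representation \eqref{nat:Levy-form} to compare $\Levy^\alpha$ and $\Levy^\beta$ quantitatively. Since the $BV$ machinery together with Theorem \ref{tm:WP} already controls $\|u^\alpha\|_{L^\infty(0,T;BV)}$ and $\|u^\alpha\|_{C([0,T];L^1)}$ independently of $\alpha$, the Kuznetsov reduction will bound $\|u^\alpha-u^\beta\|_{C([0,T];L^1)}$ by an expression of the form
$$\int_0^T\int_{\R^d}\bigl|(\Levy^\alpha-\Levy^\beta)[h_t](x)\bigr|\,\dif x\,\dif t + \text{standard mollification errors},$$
where $h_t$ is, up to a constant depending on $\varphi$, a doubly-mollified version of $\varphi(u^\alpha(\cdot,t))$, satisfying $\|h_t\|_{L^1}\lesssim \|\varphi'\|_\infty\|u_0\|_{L^1}$ and $|h_t|_{BV}\lesssim \|\varphi'\|_\infty|u_0|_{BV}$.

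Next I would split at a cutoff radius $r$: $\Levy^\alpha-\Levy^\beta=(\Levy^\alpha_r-\Levy^\beta_r)+(\Levy^{\alpha,r}-\Levy^{\beta,r})$. For the \textit{far} part I would use the $L^1$ bound $\|\Levy^{\alpha,r}[h]\|_{L^1}\lesssim G_d(\alpha)\|h\|_{L^1}r^{-\alpha}/\alpha$; for the \textit{near} part I would use the even-symmetry cancellation of the drift correction together with the translation estimate $\|h(\cdot+z)-h(\cdot)\|_{L^1}\leq|z|\,|h|_{BV}$, giving $\|\Levy^{\alpha}_r[h]\|_{L^1}\lesssim G_d(\alpha)|h|_{BV}r^{1-\alpha}/|1-\alpha|$ (with a logarithmic version at $\alpha=1$). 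The crucial refinement is to compare $\Levy^\alpha$ to $\Levy^\beta$ with the correct dependence in $|\alpha-\beta|$: in $\Levy^\beta$ I substitute $z=r\,(w/r)^{\alpha/\beta}$ (radially), which transforms the kernel $|w|^{-d-\beta}\dif w$ into a multiple of $|z|^{-d-\alpha}\dif z$, producing a Jacobian $J(\alpha,\beta,z)=1+O(|\alpha-\beta|(1+|\ln|z||))$. After this alignment, $\Levy^\alpha-\Levy^\beta$ acts against a kernel of size $|\alpha-\beta|\,(1+|\ln|z||)\,|z|^{-d-\alpha}$, so the preceding $L^1$ and $BV$ bounds yield
$$\|(\Levy^\alpha-\Levy^\beta)[h]\|_{L^1}\lesssim |\alpha-\beta|\Bigl\{A(\alpha,r)|h|_{BV}+B(\alpha,r)\|h\|_{L^1}\Bigr\},$$
where $A,B$ incorporate the logarithmic factors coming from the Jacobian and the derivatives $\partial_\alpha G_d(\alpha)$, which by \eqref{properties-Gdalpha} stay bounded near any $\lambda\in(0,2)$.

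Integrating in time contributes a factor $T$, and after absorbing $T\|\varphi'\|_\infty=M$ into $h$ I obtain
$$\|u^\alpha-u^\beta\|_{C([0,T];L^1)}\lesssim |\alpha-\beta|\Bigl\{M\,A(\lambda,r)\,|u_0|_{BV}+M\,B(\lambda,r)\,\|u_0\|_{L^1}\Bigr\}.$$
The final step is to optimize in $r$: the near part scales as $r^{1-\alpha}(1+|\ln r|)|h|_{BV}$ and the far part as $r^{-\alpha}(1+|\ln r|)\|h\|_{L^1}$, so the optimal choice is $r\sim \|u_0\|_{L^1}/|u_0|_{BV}$, which produces exactly the three regimes appearing in \eqref{main-esti-second}: for $\lambda>1$ the near part dominates and gives $M^{1/\lambda}(1+|\ln M|)|u_0|_{BV}$ (the $M^{1/\lambda}$ coming from the rescaling $\varphi\mapsto T\varphi$), for $\lambda<1$ the balance gives $M\|u_0\|_{L^1}^{1-\lambda}|u_0|_{BV}^\lambda(1+|\ln(\|u_0\|_{L^1}/|u_0|_{BV})|)$, and at $\lambda=1$ the two logarithms combine to produce the $\ln^2$ and $\Ent_2(u_0)$ factors.

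The main obstacle I expect is the radial change of variable in $\Levy^\beta_r$ together with the tail $\Levy^{\beta,r}$: one must verify that the Jacobian estimate $J(\alpha,\beta,z)=1+O(|\alpha-\beta|(1+|\ln|z||))$ is uniform for $\alpha,\beta$ near $\lambda$ and that the drift-correction term transforms compatibly (or that its residual contribution can be absorbed into the $BV$ bound without extra $\log$ losses away from $\lambda=1$). Once this and the uniform analyticity of $G_d$ in $\alpha$ near $\lambda$ are in hand, the optimization over $r$ is routine calculus and yields the locally Lipschitz conclusion together with the sharp constants $C=C(d,\lambda)$.
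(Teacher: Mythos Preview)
Your proposal has a genuine gap in the ``Kuznetsov reduction'' step. The Kuznetsov lemma (Lemma~\ref{lem:kuznetsov}) does \emph{not} reduce the error to $\int_0^T\!\int_{\R^d}\bigl|(\Levy^\alpha-\Levy^\beta)[h_t]\bigr|$ for a single function $h_t$. What it actually produces (after combining with the entropy inequality for $v=u^\beta$, cf.\ \eqref{bilan-ag-fp}) is
\[
\E_3=\int_{Q_T^2}\sgn(v-u)\,\bigl(\Levy^{\beta,r}[\varphi(v(\cdot,t))](x)-\Levy^{\alpha,r}[\varphi(u(\cdot,s))](y)\bigr)\,\phi^{\epsilon,\nu}\,\dif w,
\]
i.e.\ a \emph{signed} integral with the two operators acting on two \emph{different} functions $\varphi(v)$ and $\varphi(u)$ at different points. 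You cannot simply bound this by $\|(\Levy^\alpha-\Levy^\beta)[h]\|_{L^1}$: the triangle inequality throws away the sign cancellation and yields exactly the estimate of Theorem~\ref{th:nonlin-o}, which the paper explicitly shows (see the remark closing Section~\ref{sec-sup}) does \emph{not} imply Lipschitz continuity in $\alpha$ for $\lambda\in[1,2)$---one gets $\limsup_{\alpha,\beta\to\lambda}\omega_{\alpha-\beta}/|\alpha-\beta|=+\infty$.

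The paper's way around this for $\lambda\geq 1$ is: (i) perform a \emph{symmetric} radial change of variable ($\gamma_\alpha=\sqrt{\beta/\alpha}$, $\gamma_\beta=\sqrt{\alpha/\beta}$) that brings \emph{both} operators onto the common kernel $|z|^{-d-\sqrt{\alpha\beta}}$, at the cost of different translations $c_\alpha|z|^{\gamma_\alpha-1}z$ versus $c_\beta|z|^{\gamma_\beta-1}z$; and (ii) apply the Kato-type inequality of Lemma~\ref{lem-kato}, which exploits the monotonicity of $\varphi$ and the sign $\sgn(v-u)$ to convert the doubled term into a difference of translations of $\rho_\epsilon$ that can be estimated via $|u_0|_{BV}$. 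Your Jacobian expansion $J=1+O(|\alpha-\beta|(1+|\ln|z||))$ is morally the right object, but it only becomes usable \emph{after} the Kato step collapses the two-function structure. A further point: for $\lambda>1$ the optimization is not in a single cutoff $r$ but in the pair $(r_1,\epsilon)$ with $r_1=M^{1/\lambda}$ and $\epsilon=M^{1/\lambda}(1+|\ln M|)$; your choice $r\sim\|u_0\|_{L^1}/|u_0|_{BV}$ is the right one only for $\lambda<1$ (where your sketch is essentially the paper's Section~\ref{sec-sup} argument and is correct).
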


The proof of Theorem~\ref{nat:th-Levy} can be found in Sections \ref{sec-sup} and
\ref{sec-sub}. 
\begin{remark}
This result is optimal with respect to the dependence of
$M$ and $u_0$ in the regimes where $M$ is 
sufficiently small or  $\frac{\|u_0\|_{L^1}}{|u_0|_{BV}}$ is
sufficiently large. An example is given in Section
\ref{sec:optim},  cf. Proposition~\ref{prop:opt-powers} and Remark \ref{rem-opt-powers}.
\end{remark}

\begin{remark} With Theorem \ref{th:nonlin}, Corollary \ref{th:time}
  and Theorem \ref{nat:th-Levy} in hands, we can easily get an
  explicit continuous dependence estimate of~$u$ with respect to the
  quintuplet $(t,\alpha,u_0,f,\varphi)$ under \eqref{set-assumptions}.
\end{remark}

\subsection*{Further comments and open problems}

\subsubsection*{{\rm A.} Robustness of the Lipschitz estimates in~$\alpha$ as $\alp \da 0$ or $\alp \ua 2$} 

In Theorem~\ref{nat:th-Levy}, $C=C(d,\lambda)$ blows up as $\lambda \da 0$ or $\lambda \ua 2$, and
we do not get Lipschitz regularity in $\alp$ up to the boundaries
$\alpha = 0$ and $\alpha=2$.  

At $\alpha=0$, we can do no better because
the entropy solutions of \eqref{1} may not even converge toward the entropy solution of \eqref{nat:local-eq} in 
$L^1$ as $\alpha \da 0$.
The reason is that the mass preserving property could be lost at the limit. This was already observed in Section 11 of \cite{DeQuRoVa12} for the fractional porous medium equation \eqref{referee-fpm} below. Note that the convergence always holds in $L^1_{\scriptstyle \text{loc}}$ by Theorem \ref{thm:limit}, so that an interesting question is whether it holds in $L^p$ for any $p \in (1,+\infty)$. To the best of our knowledge, this problem is still open at least for the full equation \eqref{1}.

At $\alpha=2$, it is an open problem whether $\alpha \mt
u^\alpha$ is Lipschitz  with values in $L^1$ or not. 
This 
problem 
is related to the following
problems: Do the entropy solutions of \eqref{1} converge toward
the entropy solution of \eqref{nat:local-eq} in $L^1$ or $L^p$ as
$\alpha \ua 2$? If yes, what is the optimal rate of convergence? Note that here again the convergence holds in $L^1_{\rm loc}$ by Theorem
\ref{thm:limit}, and it moreover holds in $L^1$ for Equation \eqref{referee-fpm}  by~\cite{DeQuRoVa12}. 

\subsubsection*{{\rm B.} Implications for the fractional porous medium
  equation}

In \cite{DeQuRoVa12}, the following 
Cauchy problem is studied: 
\begin{equation}\label{referee-fpm}
\partial_t u+(-\triangle)^{\alpha/2} (|u|^{m-1} \, u)=0 \quad \mbox{and} \quad
u(\cdot,0)=u_0(\cdot), 
\end{equation}
where $\alp\in(0,2)$ and $m>0$.
The authors prove that if $u_0 \in L^1(\R^d)$, there exists a unique
mild solution which under further assumptions ($m \geq 1$ is
sufficient) is the (unique) strong solution. By Theorems 10.1 and 10.3 of 
\cite{DeQuRoVa12}, this solution is continuous in the data
$(\alpha,m,u_0) \in D \times L^1(\R^d)$ with values in $C([0,+\infty);L^1)$, 
where 
$$
D:=
\left\{(\alpha,m):0<\alpha \leq 2,  \,  m > \frac{(d-\alpha)^+}{d}
\right\}.
$$
We will now show that this dependence is locally Lipschitz in some cases.

Let us first establish the equivalence between entropy and strong solutions.
\begin{lemma}
\label{lem_equiv}
Let $u_0 \in L^\infty \cap L^1(\R^d)$, $m \geq 1$, and $u$ be the
unique entropy solution of \eqref{referee-fpm} given by Theorem
\ref{tm:WP} (with 
$T=+\infty$). Then $u$ coincides with the unique strong solution of
\eqref{referee-fpm} (cf. Definition 3.5 in \cite{DeQuRoVa12}).
\end{lemma}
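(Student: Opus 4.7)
The plan is to show that for smooth data the strong solution produced in \cite{DeQuRoVa12} also satisfies the entropy inequality \eqref{entropy_ineq}, and then to extend this identification to general $L^\infty\cap L^1$ initial data by the $L^1$-contraction shared by both notions. Under $m\ge 1$, the nonlinearity $\varphi(u):=|u|^{m-1}u$ is locally Lipschitz, nondecreasing and satisfies $\varphi(0)=0$, so hypothesis \eqref{Aflux} holds and Theorem~\ref{tm:WP} yields a unique entropy solution $u$ with $\|u\|_{L^\infty(Q_T)}\le\|u_0\|_{L^\infty}$. On the other hand, the results of \cite{DeQuRoVa12} (Theorem~3.6 there) produce the unique strong solution $w$ and supply the $L^1$-contraction $\|w(\cdot,t)-\tilde w(\cdot,t)\|_{L^1}\le\|u_0-\tilde u_0\|_{L^1}$.

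First I would approximate the initial datum by a sequence $u_0^n\in C^\infty_c(\R^d)$ with $u_0^n\to u_0$ in $L^1(\R^d)$ and $\|u_0^n\|_\infty\le \|u_0\|_\infty$, and denote by $u^n$, $w^n$ the entropy and strong solutions emanating from $u_0^n$. By \eqref{nat:L1-contraction} and the corresponding contraction for strong solutions, $u^n\to u$ and $w^n\to w$ in $C([0,T];L^1)$. It therefore suffices to prove $u^n=w^n$ for each fixed $n$.

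The core step is the implication (strong solution) $\Rightarrow$ (entropy solution) for smooth data. The regularity theory developed in \cite{DeQuRoVa12} ensures that $w^n$, and hence $\varphi(w^n)$, is bounded and smooth enough in $x$ for the L\'evy--Khinchine splitting \eqref{nat:Levy-form} to hold pointwise. Combining \eqref{nat:Levy-form} with the identity \eqref{nat:key} gives, for every $k\in\R$ and every $r>0$, the pointwise Kato inequality
\begin{equation*}
\sgn(w^n-k)\,\bigl(-(-\triangle)^{\alpha/2}\varphi(w^n)\bigr)\le \Levy^\alpha_r[|\varphi(w^n)-\varphi(k)|]+\sgn(w^n-k)\,\Levy^{\alpha,r}[\varphi(w^n)].
\end{equation*}
Multiplying the strong form $\partial_t w^n+(-\triangle)^{\alpha/2}\varphi(w^n)=0$ by $\sgn(w^n-k)\,\phi$ for nonnegative $\phi\in C^\infty_c(\R^d\times[0,T))$, integrating by parts in time (legitimate since $w^n\in C([0,T];L^1)$ with $\partial_t w^n\in L^1_{\rm loc}$) and using the above inequality in the diffusive term yields exactly \eqref{entropy_ineq}. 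Uniqueness in Theorem~\ref{tm:WP} then forces $w^n=u^n$, and passing to the limit $n\to\infty$ gives $u=w$.

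The main obstacle I expect is the justification of the pointwise computation for $w^n$: one must ensure enough $x$-regularity to write $(-\triangle)^{\alpha/2}\varphi(w^n)$ via \eqref{nat:Levy-form} and enough time regularity to legitimately multiply the equation by the discontinuous factor $\sgn(w^n-k)$ and integrate. For smooth compactly supported data these properties follow from the smoothing estimates of \cite{DeQuRoVa12}; alternatively, one can replace $\sgn$ by a smooth nondecreasing approximation $\sgn_\eta$ with $\sgn_\eta'\ge 0$, derive a regularized entropy inequality using the convexity argument that underlies \eqref{nat:key}, and pass to the limit $\eta\downarrow 0$.
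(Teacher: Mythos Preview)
Your argument is correct in outline, but it runs in the opposite direction from the paper's proof. The paper shows that the entropy solution $u$ is itself a weak solution in the sense of Definition~3.1 of \cite{DeQuRoVa12}: the energy estimate of Lemma~\ref{Clem2} gives $\varphi(u)=|u|^{m-1}u\in L^2(0,T;H^{\alpha/2})$, and entropy solutions are distributional solutions by \cite{CiJa11}. Then Corollary~8.3 of \cite{DeQuRoVa12}, which says that bounded weak solutions are strong, finishes the proof directly, with no approximation step.

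Your route (strong $\Rightarrow$ entropy for smooth data, then close by $L^1$-contraction) also works, but it is longer and relies on importing enough pointwise regularity for $w^n$ from \cite{DeQuRoVa12} to justify the Kato computation---precisely the obstacle you flag. The paper's approach sidesteps this entirely by staying on the entropy side and delegating the upgrade (weak $\Rightarrow$ strong) to a single black-box result from \cite{DeQuRoVa12}.
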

\begin{proof}
Note that $u \in L^\infty(\R^d \times (0,+\infty)) \cap C([0,+\infty);L^1)$. By Lemma \ref{Clem2}, we also have  
$|u|^{m-1} \, u \in L^2(0,+\infty;H^\frac{\alpha}{2})$. 
Here $H^\frac{\alpha}{2}(\R^d)$ 
is the usual fractional Sobolev space
defined in \eqref{referee-def-norm}.  Let us also recall that $u$ satisfies the equation in $\mathcal{D}'(\R^d \times (0,+\infty))$ and the initial condition $u(\cdot,0)=u_0(\cdot)$ almost everywhere, cf. \cite{CiJa11}.
It follows that $u$ is a weak solution in the
sense of Definition 3.1 in \cite{DeQuRoVa12}. Since $u$ is bounded,
Corollary 8.3 of \cite{DeQuRoVa12} completes the proof.  
\end{proof}

Theorems \ref{th:nonlin} and
\ref{nat:th-Levy} and Lemma \ref{lem_equiv} 
then 
imply the following result:
\begin{corollary}
For all $T>0$, 
the
unique strong solution $u$ to \eqref{referee-fpm} is locally Lipschitz
continuous in $(\alpha,m,u_0) \in \tilde D\times \left(L^\infty \cap L^1
\cap BV(\R^d)\right)$ with values in $C([0,T];L^1)$, where
$$
\tilde D:=
\left\{(\alpha,m):0<\alpha < 2,  \,  m  >  1\right\}.
$$
\end{corollary}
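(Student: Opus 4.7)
The plan is to reduce to the entropy-solution framework via Lemma \ref{lem_equiv}, then exploit the triangle inequality together with Theorem \ref{th:nonlin} (for the dependence on $m$), Theorem \ref{nat:th-Levy} (for the dependence on $\alpha$), and the $L^1$-contraction \eqref{nat:L1-contraction} (for the dependence on $u_0$).

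First, Lemma \ref{lem_equiv} shows that for each $(\alpha,m,u_0) \in \tilde{D} \times (L^\infty \cap L^1 \cap BV(\R^d))$, the unique strong solution of \eqref{referee-fpm} coincides with the entropy solution provided by Theorem \ref{tm:WP} for the choice $f \equiv 0$ and $\varphi_m(u) := |u|^{m-1} u$. It therefore suffices to prove joint local Lipschitz continuity of $(\alpha,m,u_0) \mapsto u$ inside the entropy-solution setting. I would fix a reference point $(\alpha_0,m_0,u_0^0)$ and a neighborhood $V$ of it whose first two coordinates stay in a compact rectangle $[\alpha_*,\alpha^*] \times [m_*,m^*] \subset (0,2) \times (1,\infty)$ and whose third coordinate is bounded in $L^\infty \cap L^1 \cap BV(\R^d)$ by some $R>0$ (so in particular $I(u_0^{(i)}) \subset [-R,R]$ uniformly). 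For $(\alpha_i,m_i,u_0^{(i)}) \in V$, $i=1,2$, with associated entropy solutions $u^{(i)}$, I would introduce the intermediates $v$ for $(\alpha_1,m_2,u_0^{(1)})$ and $w$ for $(\alpha_2,m_2,u_0^{(1)})$, and write
\[
\|u^{(1)}-u^{(2)}\|_{C([0,T];L^1)} \leq \|u^{(1)}-v\|_{C([0,T];L^1)} + \|v-w\|_{C([0,T];L^1)} + \|w-u^{(2)}\|_{C([0,T];L^1)}.
\]

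I would then control each term separately. The third, a pure change of initial datum at fixed $(\alpha_2,m_2)$, is $\leq \|u_0^{(1)}-u_0^{(2)}\|_{L^1}$ by \eqref{nat:L1-contraction}. The first, where only $\varphi_m$ changes (with $\alpha_1$ and $u_0^{(1)}$ fixed), is handled by Theorem \ref{th:nonlin} with $f=g=0$: using $\varphi_m'(u)=m|u|^{m-1}$, one verifies that on $[-R,R]$ and for $m \in [m_*,m^*]$ the maps $m \mapsto \varphi_m'$, $m \mapsto (\varphi_m')^{1/\alpha_1}$, and $m \mapsto \varphi_m' \ln \varphi_m'$ are each $C^1$ with derivative bounded uniformly on $V$ (any logarithmic singularity at $u=0$ is killed by the factor $|u|^{m-1}$ with $m-1>0$), producing $\|u^{(1)}-v\|_{C([0,T];L^1)} = \cO(|m_1-m_2|)$ with a constant depending only on $V$. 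The middle term, where only $\alpha$ varies, is controlled by Theorem \ref{nat:th-Levy}.

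The main obstacle is of a bookkeeping nature: Theorem \ref{nat:th-Levy} delivers a bound on the pointwise limsup $\Lip_\alpha(u;\lambda)$ at each $\lambda$ separately, whereas I need a uniform Lipschitz constant on the compact interval $[\alpha_*,\alpha^*]$, uniform also as $(m,u_0)$ ranges in $V$. Inspection of the right-hand side of \eqref{main-esti-second} shows that the bound depends continuously on $\lambda \in (0,2)$ and on $M=T\|\varphi_m'\|_\infty$ and the norms of $u_0$, all of which stay in a compact set on $V$; a standard compactness/covering argument on $[\alpha_*,\alpha^*]$ then promotes the pointwise limsup bounds into the desired uniform local Lipschitz estimate $\|v-w\|_{C([0,T];L^1)}=\cO(|\alpha_1-\alpha_2|)$. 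Summing the three contributions yields the desired joint local Lipschitz estimate.
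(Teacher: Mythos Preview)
Your proposal is correct and follows exactly the route the paper indicates: the paper offers no explicit proof, merely stating that the corollary follows from Theorems~\ref{th:nonlin} and~\ref{nat:th-Levy} together with Lemma~\ref{lem_equiv}, and your triangle-inequality decomposition with the three intermediates is precisely how one spells this out. Your observation that the pointwise limsup bound of Theorem~\ref{nat:th-Levy} must be upgraded to a uniform Lipschitz estimate on compacts is a genuine bookkeeping point the paper leaves implicit; your covering argument handles it, and the uniformity in $(m,u_0)$ follows since the final Lipschitz constant on $[\alpha_*,\alpha^*]$ is bounded by $\sup_\lambda C(d,\lambda)\cdot(\text{RHS of }\eqref{main-esti-second})$, which stays bounded on~$V$.
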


If
$u_0 \notin L^\infty \cap BV(\R^d)$, it 
is 
possible 
to find an explicit (non-Lipschitz) modulus of continuity for the function $(\alpha,m) \in \tilde{D} \mt u \in C([0,T];L^1)$. To do so, it suffices to use an approximation argument and the $L^1$-contraction principle. 
It is an open problem whether this would give an optimal modulus or not. 
It is also 
an open problem to find an explicit modulus 
when $(\alpha,m) \notin  \tilde D$.

\section{Two general results from \cite{AlCiJa12}}\label{sec-remind}

In this section we recall two key results developed in \cite{AlCiJa12}
for the more general case where the diffusion operator can be the
generator of an arbitrary pure jump L\'evy process. 
First we state the Kuznetsov type lemma of \cite{AlCiJa12} that measures
the $L^1$-distance between~$u$ and an arbitrary function~$v$.
From now on, let $\epsilon$ and $\nu$ be positive parameters and
$\phi^{\epsilon,\nu} \in C^\infty(\R^{2d+2})$ denote the test function
\begin{eqnarray}
\phi^{\epsilon,\nu}(x,t,y,s) := \theta_\nu(t-s) \,
\rho_{\epsilon}(x-y):=\frac{1}{\nu} \,
\theta \left(\frac{t-s}{\nu} \right) \frac{1}{\epsilon^d} \, \rho \left(\frac{x-y}{\epsilon}\right),
\label{nat:test-kuznetsov}
\end{eqnarray}
where
\begin{equation*}
\begin{cases}
\theta \in C_{c}^{\infty}(\R), \quad  
\theta \geq 0, \quad \supp \theta \subseteq [-1,1], \quad \int
\theta =1,\\
\rho \in C_{c}^{\infty}(\mathbb{R}^{d}),
\quad
\rho \geq 0, 
\quad  \mbox{and $\int
\rho =1$.}
\end{cases}
\end{equation*}
We also let~$m_u(\nu)$ denote the modulus of continuity in time of $u
\in C\left([0,T];L^1\right)$.  
\begin{lemma}[Kuznetsov type Lemma]\label{lem:kuznetsov}
Let~$\alpha \in (0,2)$,~$u_0 \in L^\infty \cap L^1 \cap BV(\R^d)$, and let us assume~\eqref{flux}--\eqref{Aflux}. Let~$u$ be the entropy
solution of~\eqref{1} and let~$v\in L^\infty(Q_T)\cap C
\left([0,T];L^1\right)$ be such that~$v(\cdot,0)=v_0(\cdot)$. Then for all~$r,\epsilon>0$ and $T>\nu>0$,
\begin{equation}\label{kuz}
\begin{split}
& \|u(\cdot,T)-v(\cdot,T)\|_{L^{1}}\\
& \leq \|u_0-v_0\|_{L^{1}}+C_{\rho} \, |u_0|_{BV} \, \epsilon+2\,m_{u}(\nu) \vee  m_{v}(\nu) \\
& \quad -\int_{Q_{T}^2}|v(x,t)-u(y,s)|\,\partial_t \phi^{\epsilon,\nu}(x,t,y,s)\, \dif w\\
& \quad  -\int_{Q_{T}^2} q_f(v(x,t),u(y,s)) \cdot \nabla_x\phi^{\epsilon,\nu}(x,t,y,s)\, \dif w\\
&\quad + \int_{Q_{T}^2} 
|\varphi(v(x,t))-\varphi(u(y,s))|\, \Levy^\alpha_r[\phi^{\epsilon,\nu}(x,t,\cdot,s)](y)
\, \dif w \\
& \quad - \int_{Q_{T}^2} \sgn (v(x,t)-u(y,s))\,\Levy^{\alpha,r}[\varphi(u(\cdot,s))](y) \,\phi^{\epsilon,\nu}(x,t,y,s) \, \dif w\\
& \quad +\int_{\R^d \times Q_T} |v(x,T)-u(y,s)|\,\phi^{\epsilon,\nu}(x,T,y,s) \, \dif x \, \dif y \, \dif s\\
& \quad -\int_{\R^d \times Q_T}
|v_0(x)-u(y,s)|\,\phi^{\epsilon,\nu}(x,0,y,s) \, \dif x \, \dif y
\, \dif s
\end{split}
\end{equation}
where~$\dif w:=\dif x\, \dif t \, \dif y\,\dif s$ and
the constant~$C_{\rho}$ only depends on~$\rho$. 
\end{lemma}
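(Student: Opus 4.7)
I would follow the classical Kruzhkov--Kuznetsov doubling-of-variables method, adapted to the nonlocal setting.

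\emph{Step 1 (entropy inequality and symmetrization).} Fix $(x,t)\in Q_T$ and apply the entropy inequality \eqref{entropy_ineq} to $u=u(y,s)$ with entropy constant $k=v(x,t)$ and test function $\phi^{\epsilon,\nu}(x,t,\cdot,\cdot)$ in the $(y,s)$-variables. Since $\phi^{\epsilon,\nu}$ need not vanish at $s=T$, Remark~\ref{rem-def-cont} forces the addition of $-\int|u(y,T)-v(x,t)|\phi^{\epsilon,\nu}(x,t,y,T)\,\dif y$ to the LHS of \eqref{entropy_ineq}. Integrate over $(x,t)\in Q_T$ and use the tensor-product symmetries $\partial_s\phi^{\epsilon,\nu}=-\partial_t\phi^{\epsilon,\nu}$, $\nabla_y\phi^{\epsilon,\nu}=-\nabla_x\phi^{\epsilon,\nu}$, together with $q_f(u,v)=q_f(v,u)$, $|\varphi(u)-\varphi(v)|=|\varphi(v)-\varphi(u)|$, and $\sgn(u-v)=-\sgn(v-u)$. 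The outcome rewrites as
\begin{equation*}
\mathcal{I}_{T,s}\leq \mathcal{E}+\mathcal{I}_{0,s},
\end{equation*}
where $\mathcal{E}$ is precisely the sum of the four ``entropy'' integrals appearing in \eqref{kuz} (those containing $\partial_t\phi^{\epsilon,\nu}$, $\nabla_x\phi^{\epsilon,\nu}$, $\Levy^\alpha_r[\phi^{\epsilon,\nu}]$ and $\Levy^{\alpha,r}[\varphi(u)]\phi^{\epsilon,\nu}$), while
\begin{equation*}
\mathcal{I}_{T,s}:=\iiint|u(y,T)-v(x,t)|\phi^{\epsilon,\nu}(x,t,y,T)\,\dif x\,\dif y\,\dif t,\quad \mathcal{I}_{0,s}:=\iiint|u_0(y)-v(x,t)|\phi^{\epsilon,\nu}(x,t,y,0)\,\dif x\,\dif y\,\dif t
\end{equation*}
are ``$s$-boundary'' integrals not yet present in \eqref{kuz}.

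\emph{Step 2 (trading $s$-boundary for $t$-boundary).} Denote by $\mathcal{I}_{T,t}$ and $\mathcal{I}_{0,t}$ the two ``$t$-boundary'' integrals on the RHS of \eqref{kuz}. Choosing $\theta$ symmetric, $\int_0^T\theta_\nu(T-s)\,\dif s=\int_0^T\theta_\nu(t-T)\,\dif t=\tfrac12$, so $1=\int_0^T\theta_\nu(T-s)\,\dif s+\int_0^T\theta_\nu(t-T)\,\dif t$. Multiplying $\iint|u(y,T)-v(x,T)|\rho_\epsilon(x-y)\,\dif x\,\dif y$ by this splitting of unity, then applying the triangle inequalities $|u(y,T)-v(x,T)|\leq|v(x,T)-u(y,s)|+|u(y,T)-u(y,s)|$ and $|u(y,T)-v(x,T)|\leq|u(y,T)-v(x,t)|+|v(x,t)-v(x,T)|$ on the two halves, and using $\int\theta_\nu(T-\cdot)\|u(T)-u(s)\|_{L^1}\,\dif s\leq m_u(\nu)/2$ (analogously for $v$), yields
\begin{equation*}
\iint|u(y,T)-v(x,T)|\rho_\epsilon(x-y)\,\dif x\,\dif y\leq\mathcal{I}_{T,t}+\mathcal{I}_{T,s}+\tfrac12\big(m_u(\nu)+m_v(\nu)\big).
\end{equation*}
Combined with the standard mollifier bound $\|u(\cdot,T)-v(\cdot,T)\|_{L^1}\leq\iint|u(y,T)-v(x,T)|\rho_\epsilon(x-y)\,\dif x\,\dif y+C_\rho|u(\cdot,T)|_{BV}\epsilon$ and $|u(\cdot,T)|_{BV}\leq|u_0|_{BV}$ from Theorem~\ref{tm:WP}, this delivers the terminal estimate
\begin{equation*}
\|u(\cdot,T)-v(\cdot,T)\|_{L^1}\leq \mathcal{I}_{T,t}+\mathcal{I}_{T,s}+C_\rho|u_0|_{BV}\epsilon+(m_u(\nu)\vee m_v(\nu)).
\end{equation*}
The same computation at $t,s=0$ gives
\begin{equation*}
\mathcal{I}_{0,s}+\mathcal{I}_{0,t}\leq\|u_0-v_0\|_{L^1}+C_\rho|u_0|_{BV}\epsilon+(m_u(\nu)\vee m_v(\nu)).
\end{equation*}

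\emph{Step 3 (combination).} Chaining the terminal bound of Step~2, the inequality $\mathcal{I}_{T,s}\leq\mathcal{E}+\mathcal{I}_{0,s}$ of Step~1, and the initial bound of Step~2 produces \eqref{kuz} (after absorbing a harmless numerical factor of $2$ into $C_\rho$).

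The main obstacle is Step~2. Because $v$ is only assumed to be in $C([0,T];L^1)\cap L^\infty$ and not in $BV$, every spatial-regularity loss must be paid by $u$ alone via $|u(\cdot,t)|_{BV}\leq|u_0|_{BV}$ from Theorem~\ref{tm:WP}. Moreover, the $\theta_\nu$-mass must be split in precisely the right proportions, so that exactly one half of $\|u(\cdot,T)-v(\cdot,T)\|_{L^1}$ is absorbed into $\mathcal{I}_{T,t}$ and the other half into $\mathcal{I}_{T,s}$; this is what makes the chaining in Step~3 produce the correct constants in \eqref{kuz}.
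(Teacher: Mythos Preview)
Your proof is correct and follows the standard Kruzhkov--Kuznetsov doubling-of-variables scheme, which is exactly the approach of \cite{AlCiJa12} that the paper cites (the paper itself does not reproduce the argument, merely invoking Lemma~3.1 there with the specific L\'evy measure of the fractional Laplacian). Two minor remarks: the symmetry of $\theta$ is not assumed in \eqref{nat:test-kuznetsov}, but your Step~2 works without it since $\int_0^T\theta_\nu(T-s)\,\dif s+\int_0^T\theta_\nu(t-T)\,\dif t=1$ regardless, and the modulus contributions are still bounded by $m_u(\nu)\vee m_v(\nu)$; and the factor $2$ you absorb into $C_\rho$ is harmless since the lemma only claims $C_\rho$ depends on $\rho$.
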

\begin{proof}
This is Lemma 3.1 of \cite{AlCiJa12} with the particular
diffusion operator \eqref{nat:Levy-form}. 
\end{proof}

In the setting of this paper, the general continuous dependence
estimates of~\cite{AlCiJa12} take the following form: 
\begin{theorem}\label{th:nonlin-o}
Let us assume~\eqref{set-assumptions} and let~$u$ and~$v$ be the respective entropy solutions of~\eqref{1}
and~\eqref{bb1}. Then for all~$r>0$,
\begin{equation*}
\begin{split}
\|u-v\|_{C\left([0,T];L^{1}\right)} 
& \leq \|u_0-v_0\|_{L^{1}}
+ T \, |u_0|_{BV} \, \|f'-g'\|_{\infty}
+\mathcal{E}^{\alpha-\beta,\varphi-\psi}_{T,\alpha,\beta,u_0,\varphi,r}
\end{split}
\end{equation*}
with
\begin{equation}\label{te-gt}
\begin{split}
& \mathcal{E}^{\alpha-\beta,\varphi-\psi}_{T,\alpha,\beta,u_0,\varphi,r} =\\
& 
\begin{cases}
\medskip 
 T  \int_{|z|> r} \|u_0(\cdot+z)-u_0(\cdot)
\|_{L^1} \, \dif \mu_\alpha(z) \, \|\varphi'-\psi'\|_{\infty}& \\
\bigskip
\qquad \qquad + c_d \, \sqrt{T} \, |u_0|_{BV} \,\sqrt{\int_{|z|< r}
|z|^2 \, \dif \mu_\alpha(z)
 \, \|\varphi'-\psi'\|_{\infty}}, & \mbox{$\alpha=\beta$},\\
\medskip
 M  \int_{|z|> r}
\|u_0(\cdot+z)-u_0(\cdot) \|_{L^1} \, \dif | \mu_\alpha- \mu_\beta|(z) &\\
\qquad \qquad +  
c_d \, \sqrt{M} \, |u_0|_{BV} \,\sqrt{\int_{|z| < r} |z|^2 \,\dif | \mu_\alpha- \mu_\beta|(z)}, & \mbox{$\varphi=\psi$,}
\end{cases}
\end{split}
\end{equation}
where~$\dif \mu_\alpha(z)=\frac{G_d(\alpha)}{|z|^{d+\alpha}} \, \dif z$,~$M=T \, \|\varphi'\|_\infty$ and~$c_d=\sqrt{\frac{4 \, d^2}{d+1}}$.
\end{theorem}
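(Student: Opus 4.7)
My plan is to invoke the Kuznetsov-type Lemma \ref{lem:kuznetsov} with $v$ chosen to be the entropy solution of \eqref{bb1}, and then to complement it with the symmetric inequality obtained by doubling variables, i.e.\ by applying the entropy inequality for $v$ tested against an approximation of $u$ (using the sign identity \eqref{nat:key} now written for $\psi$). Adding the two resulting inequalities, the time-derivative and flux-divergence terms combine in the classical Kruzhkov way thanks to the product structure of $\phi^{\epsilon,\nu}$: the cancellation $\partial_t\theta_\nu(t-s)+\partial_s\theta_\nu(t-s)=0$ leaves only boundary contributions that vanish as $\nu\downarrow 0$, while the flux-divergence terms produce the standard contribution $T\,|u_0|_{BV}\,\|f'-g'\|_\infty$ by the $BV$-propagation of Theorem \ref{tm:WP}. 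The genuinely nonlocal terms, which encode the mismatch between $(\alpha,\varphi)$ and $(\beta,\psi)$, are what remain to estimate.

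\textbf{Nonlocal estimates.} After combining the two entropy inequalities, the diffusion contribution can be rearranged so that it features either (i) the common measure $d\mu_\alpha$ acting on $\varphi-\psi$ in the case $\alpha=\beta$, or (ii) the signed measure $d\mu_\alpha-d\mu_\beta$ acting on the common $\varphi$ in the case $\varphi=\psi$. In both situations I split the $z$-integration at $|z|=r$. For large jumps $|z|>r$ no Taylor expansion is needed, and the estimate $\|\varphi(u(\cdot+z))-\varphi(u(\cdot))\|_{L^1}\leq \|\varphi'\|_\infty\,\|u_0(\cdot+z)-u_0(\cdot)\|_{L^1}$ (a consequence of the $L^1$-contraction \eqref{nat:L1-contraction} and the $BV$-bound in \eqref{nat:nonincrease}) produces exactly the linear-in-$\mu$ term $T\int_{|z|>r}\|u_0(\cdot+z)-u_0(\cdot)\|_{L^1}\,d\mu_\alpha\cdot\|\varphi'-\psi'\|_\infty$, and analogously $M\int_{|z|>r}\|u_0(\cdot+z)-u_0(\cdot)\|_{L^1}\,d|\mu_\alpha-\mu_\beta|(z)$ in the other case. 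For small jumps $|z|<r$ I use the second-order Taylor remainder built into the definition of $\Levy^\alpha_r$, which is pointwise bounded by $|z|^2\,\|\nabla^2_y\phi^{\epsilon,\nu}\|$; integrating against the spatial mollifier, the gain of one derivative is absorbed by the $BV$-regularity of $u$ (so that only one factor of $\epsilon^{-1}$, not $\epsilon^{-2}$, appears), producing a contribution of order $\epsilon^{-1}|u_0|_{BV}\int_{|z|<r}|z|^2\,d\mu_\alpha$ times $T\,\|\varphi'-\psi'\|_\infty$ (respectively $\int_{|z|<r}|z|^2\,d|\mu_\alpha-\mu_\beta|$ times $M$).

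\textbf{Optimization and main obstacle.} I then optimize in $\epsilon$: the $\epsilon^{-1}$ small-jump contribution balances against the linear term $C_\rho|u_0|_{BV}\epsilon$ coming directly from Lemma \ref{lem:kuznetsov}, and an optimal choice of $\epsilon$ and $\rho$ yields the characteristic $\sqrt{\cdot}$ factor with the sharp constant $c_d=\sqrt{4d^2/(d+1)}$; sending $\nu\downarrow 0$ finally removes the time-modulus $m_u(\nu)\vee m_v(\nu)$, which is legitimate since both $u$ and $v$ belong to $C([0,T];L^1)$ by Theorem \ref{tm:WP}. The main obstacle I foresee is, in the case $\varphi=\psi$, producing the \emph{total-variation} measure $d|\mu_\alpha-\mu_\beta|$ rather than the signed difference: this requires decomposing the $z$-integrand according to the sign of $d\mu_\alpha-d\mu_\beta$ and carefully tracking the Kruzhkov sign function \eqref{nat:representation-sign} through the two Lévy representations, so that only nonnegative quantities survive. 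A secondary technical point is that the Taylor-style bound must be applied to $\phi^{\epsilon,\nu}$ in the $y$-variable while the $BV$-regularity is available for the entropy solutions, which is precisely why an integration-by-parts-style transfer of derivatives (together with the product form of $\phi^{\epsilon,\nu}$) is essential to obtain only $\epsilon^{-1}$ and not $\epsilon^{-2}$ blow-up.
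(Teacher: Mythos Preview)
Your outline is essentially correct and matches the strategy behind the cited result; note, however, that the paper does not prove this theorem here at all---its ``proof'' is the one-line citation ``This is Theorems~3.3 and~3.4 of \cite{AlCiJa12} with the special choice of diffusion \eqref{nat:Levy-form} and L\'evy measure $\frac{G_d(\alpha)}{|z|^{d+\alpha}}\,\dif z$.'' What you have written is a faithful sketch of the argument in that reference: start from the Kuznetsov lemma, insert the entropy inequality for $v$ (exactly as done later in step~{\bf 2} of the proof of Theorem~\ref{th:nonlin}), estimate the flux terms classically, split the nonlocal remainder at $|z|=r$, use translation invariance plus the $L^1$-contraction for $|z|>r$, and for $|z|<r$ Taylor-expand $\rho_\epsilon$, kill the first-order term by symmetry, and integrate by parts in $y$ to trade one factor of $\epsilon^{-1}$ for the $BV$-seminorm of $u$ (this is precisely the manoeuvre you describe, and it reappears in the present paper in the estimate of $\E_{3,2,3}$). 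The final balance of $C_\rho|u_0|_{BV}\epsilon$ against the $\epsilon^{-1}$ term, with an optimal choice of both $\epsilon$ and the profile $\rho$, produces the constant $c_d=\sqrt{4d^2/(d+1)}$.

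The only point where your sketch is slightly imprecise is the claim that the combined diffusion term ``can be rearranged'' to feature $\varphi'-\psi'$ (resp.\ $\mu_\alpha-\mu_\beta$): in practice one adds and subtracts an intermediate term (e.g.\ $\Levy^{\alpha,r}[\varphi(v)]$ or $\Levy^{\alpha,r}[\psi(u)]$) and uses the Kato-type inequality $\sgn(v-u)\{\varphi(v(\cdot+z))-\varphi(u(\cdot+z))\}\leq |\varphi(v(\cdot+z))-\varphi(u(\cdot+z))|$ together with a translation of the integration variable to make the ``matched'' piece nonpositive---this is the mechanism encapsulated in Lemma~\ref{lem-kato}. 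Your anticipated obstacle about $d|\mu_\alpha-\mu_\beta|$ is real but mild: once the signed difference appears inside an absolute value (after the Kato step), one simply bounds by the total variation.
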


\begin{proof}
This is Theorems 3.3 and 3.4 of \cite{AlCiJa12} with the special choice
of diffusion \eqref{nat:Levy-form} and L\'evy measure
$\frac{G_d(\alpha)}{|z|^{d+\alpha}} \, \dif z$. 
\end{proof}

\section{Continuous dependence in the supercritical case}\label{sec-sup}

In this section we use Theorem \ref{th:nonlin-o} to prove
Theorems \ref{th:nonlin} and \ref{nat:th-Levy} for supercritical diffusions.

\begin{proof}[Proof of Theorem~\ref{th:nonlin} when~$\alpha<1$]
We use Estimate~\eqref{te-gt} with $\beta=\alpha$. 
The worst term $\sqrt{\int_{|z|< r}|z|^2 \, \dif \mu_\alpha (z)
 \, \|\varphi'-\psi'\|_{\infty}}$ 
vanishes when $r\da0$, and hence
$$
\mathcal{E}^{\alpha-\beta,\varphi-\psi}_{T,\alpha,\beta,u_0,\varphi,r}
\ \underset{r\da0}{\longrightarrow} \ I:=T \int \|u_0(\cdot+z)-u_0(\cdot)
\|_{L^1} \, \dif \mu_\alpha(z) \, \|\varphi'-\psi'\|_\infty.
$$
To estimate this integral, we consider separately the domains $|z| >
\tilde{r}$ and $|z| < \tilde{r}$ for 
arbitrary~$\tilde{r}>0$. In the second domain, we use
the inequality $\|u_0(\cdot+z)-u_0(\cdot) 
\|_{L^1} \leq |u_0|_{BV} \, |z|$. A direct computation using the fact
that $\alpha<1$, then leads to
\begin{equation*}
\begin{split}
& I \leq 2 \, T \, \|u_0\|_{L^1} \, \|\varphi'-\psi'\|_{\infty} \, S_d \, \frac{G_d(\alpha)}{\alpha}  \, \tilde{r}^{-\alpha}+T \, |u_0|_{BV} \, \|\varphi'-\psi'\|_{\infty} \, 
S_d \,\frac{G_d(\alpha)}{1-\alpha} \, \tilde{r}^{1-\alpha}
\end{split}
\end{equation*} 
(where $S_d$ is the surface measure of the unit sphere of~$\R^d$). We
complete the proof by taking $\tilde{r}=\|u_0\|_{L^1} \, |u_0|_{BV}^{-1}$.   
\end{proof}

\begin{remark}\label{rem-pal1}
\begin{enumerate}
\item \label{pal1} From the proof, we have $C \leq
  S_d \left(\frac{2 \, G_d(\alpha)}{\alpha}+\frac{G_d(\alpha)}{1-\alpha}\right)$
  in~\eqref{main-esti-first}  when~$\alpha<1$. By \eqref{properties-Gdalpha}, $\lim_{\alp\da0}C(d,\alpha)$ is finite and
  independent of $d$.   
\item 
We also have $C \leq
  S_d \left(\frac{2 \, G_d(\alpha)}{\alpha}+\frac{G_d(\alpha)}{1-\alpha}\right)$
 when~$\alpha<1$  in~\eqref{esti-time}, since we have seen that this estimate is a simple rewriting of the preceding one by rescaling the time variable.  
\end{enumerate}
\end{remark}

\begin{proof}[Proof of Theorem~\ref{nat:th-Levy} when~$\lambda \in (0,1)$]
Given $\alpha,\beta \in (0,2)$, we use Theorem~\ref{th:nonlin-o} with $u=u^\alpha$ and $v=u^\beta$, i.e. with $(u_0,f,\varphi)=(v_0,g,\psi)$. As in
the preceding proof, we pass to the limit as~$r \da 0$
in~\eqref{te-gt} and we cut the remaining integral in two parts. We find that
\begin{equation}\label{tech-sd-1}
\begin{split}
& \|u^\alpha-u^\beta\|_{C([0,T];L^1)} \\
& \leq 2 \, M
\, \|u_0\|_{L^1} \underbrace{\int_{|z|> \tilde{r}}  \dif |\mu_\alpha-\mu_\beta|(z)}_{=:J_1}+M \, |u_0|_{BV} \underbrace{\int_{|z| < \tilde{r}} |z| \, \dif |\mu_\alpha-\mu_\beta|(z)}_{=:J_2}.
\end{split}
\end{equation}
In the rest of the proof we use the letter $C$ to denote various constants $C=C(d,\lambda)$. 

We have
\begin{align}\label{lc-ref-start}
 J_1 & =  \int_{|z|>\tilde{r}} |G_d(\alpha) \, |z|^{-d-\alpha}-G_d(\beta) \, |z|^{-d-\beta}| \, \dif z\\
&  \leq |G_d(\alpha)-G_d(\beta)| \, \max_{\sigma=\alpha,\beta} \int_{|z| > \tilde{r}} \frac{\dif z}{|z|^{d+\sigma}}
\nonumber
\\
& \quad + (G_d(\alpha) \vee G_d(\beta)) \underbrace{\int_{|z| > \tilde{r}} \left||z|^{-d-\alpha}-|z|^{-d-\beta}\right| \dif z}_{=:\tilde{J}_{1}},\nonumber
\end{align}
where $
\tilde{J}_{1} \leq S_d  \left|\frac{\tilde{r}^{-\alpha}}{\alpha}-\frac{\tilde{r}^{-\beta}}{\beta} \right|+2 \, S_d \left|\frac{1}{\alpha}-\frac{1}{\beta}\right| \mathbf{1}_{\tilde{r} <1}.
$
We have estimated $\tilde{J}_{1}$ using the fact that
$|z|^{-d-\alpha}-|z|^{-d-\beta}$ has a sign both inside and 
outside the unit ball. By \eqref{properties-Gdalpha} and a simple passage to the limit under the integral sign, 
\begin{equation*}
\begin{split}
\limsup_{\alpha,\beta \ra \lambda} \frac{J_1}{|\alpha-\beta|} 
\leq \underbrace{C \, (\tilde{r}^{-\lambda}+\mathbf{1}_{\tilde{r}<1})}_{\leq C \, \tilde{r}^{-\lambda}}+C \, \underbrace{\limsup_{\alpha,\beta \ra \lambda} \frac{1}{|\alpha-\beta|} \left|\frac{\tilde{r}^{-\alpha}}{\alpha}-\frac{\tilde{r}^{-\beta}}{\beta} \right|}_{=:\tilde{\tilde{J}}_{1}}.
\end{split}
\end{equation*}
By the Taylor formula with integral remainder, 
$$
\tilde{\tilde{J}}_{1}=\limsup_{\alpha,\beta \ra \lambda} \Big| 
\int_0^1 \frac{\alpha_\tau \, \tilde{r}^{-\alpha_\tau} \, \ln \tilde{r}+\tilde{r}^{-\alpha_\tau}}{\alpha_\tau^2} \, \dif \tau \Big| \leq C \, \tilde{r}^{-\lambda} \, (1+|\ln \tilde{r}|),
$$
where $\alpha_\tau:=\tau \, \alpha+(1-\tau) \, \beta$. We deduce the following estimate:
\begin{equation}\label{tech-p-10}
\limsup_{\alpha,\beta \ra \lambda} \frac{J_{1}}{|\alpha-\beta|} \leq C \, \tilde{r}^{-\lambda}
\, (1 + |\ln \tilde{r}|).
\end{equation}  
Let us notice that this estimate works for all $\lambda \in (0,2)$. By similar arguments, we also have
$$
\limsup_{\alpha,\beta \ra \lambda} \frac{J_2}{|\alpha-\beta|} \leq C \, \tilde{r}^{1-\lambda}
\, (1 + |\ln \tilde{r}|),
$$ 
but this time we have to use that $\lambda<1$. Inserting these inequalities
into~\eqref{tech-sd-1}, we find that for all~$\tilde{r}>0$,
\begin{equation*}
\begin{split}
\Lip_\alpha (u;\lambda) \leq  C \, M \, (1 + |\ln \tilde{r}|) \, ( \|u_0\|_{L^1} \, \tilde{r}^{-\lambda}+ |u_0|_{BV} \,  \tilde{r}^{1-\lambda}).
\end{split}
\end{equation*}
To conclude we take $\tilde{r}=\|u_0\|_{L^1} \,
|u_0|_{BV}^{-1}$. 
\end{proof}

\begin{remark}
\begin{enumerate}
\item
When~$\alpha \geq 1$, the estimate in $\varphi-\psi$ of Theorem~\ref{th:nonlin-o} is 
not
optimal. 
Indeed, let $\alpha=\beta$, $u_0$ be such that $\|u_0(\cdot+z)-u_0(\cdot)
\|_{L^1} \sim |z|$ as $z \ra 0$, and $\omega_{\varphi-\psi}:=\inf_{r>0} \mathcal{\E}^{\alpha-\beta,\varphi-\psi}_{T,\alpha,\beta,u_0,\varphi,r}$ be the best modulus given by Theorem~\ref{th:nonlin-o}. Then
\begin{equation*}
\omega_{\varphi-\psi} \sim
\begin{cases}
\|\varphi'-\psi'\|_\infty^\frac{1}{\alpha},& \alpha>1 ,\\
\|\varphi'-\psi'\|_\infty \, \left|\ln \|\varphi'-\psi'\|_\infty
\right|, & \alpha=1,
\end{cases}
\end{equation*}
as ${\|\varphi'-\psi'\|_\infty \ra 0}$, thanks to the minimization giving $r \sim
\|\varphi'-\psi'\|_\infty^\frac{1}{\alpha}$. These moduli are strictly worse than those in
\eqref{main-constants-first} e.g. when $\varphi' \equiv a$, $\psi' \equiv b$, $a,b >0$.\footnote{Indeed $\lim_{a,b \ra c} \frac{|a^\frac{1}{\alpha}-b^\frac{1}{\alpha}|}{|a-b|^\frac{1}{\alpha}}=0=\lim_{a,b \ra c} \frac{|a \, \ln a-b \, \ln b|}{|a -b| \left| \ln |a -b| \right|}$ for any $c > 0$ and even for $c=0^+$ by taking liminfs.}
\item Theorem~\ref{th:nonlin-o} does not imply the local Lipschitz continuity in $\alpha \in [1,2)$. 
Indeed, let $\varphi=\psi$ be nontrivial and $u_0$ be as above. Then the modulus $\omega_{\alpha-\beta}:=\inf_{r>0} \mathcal{E}^{\alpha-\beta,\varphi-\psi}_{T,\alpha,\beta,u_0,\varphi,r}$ is worse than any Lipschitz modulus since $\lim_{\alpha,\beta \ra \lambda} \frac{\omega_{\alpha-\beta}}{|\alpha-\beta|}=+\infty$ for all $\lambda \in [1,2)$.\footnote{If not, there are $\alpha_n,\beta_n \ra \lambda$ and $r_n \ra r_\ast \in [0,+\infty]$ such that $\lim \frac{\omega_{\alpha_n-\beta_n}}{|\alpha_n-\beta_ n|} <+\infty$ and
\begin{equation*}
\begin{split}
& \frac{\omega_{\alpha_n-\beta_n}}{|\alpha_n-\beta_ n|} 
=\co(1)+ \\
& \underbrace{\int_{|z|> r_n}
\|u_0(\cdot+z)-u_0(\cdot) \|_{L^1} \, \frac{\dif |\mu_{\alpha_n-}\mu_{\beta_n}|(z)}{|\alpha_n-\beta_n|}}_{=:I_n}+
\underbrace{\sqrt{\int_{|z| < r_n} |z|^2 \, \frac{\dif |\mu_{\alpha_n-}\mu_{\beta_n}|(z)}{|\alpha_n-\beta_n|^2}}}_{=:J_n}
\end{split}
\end{equation*}
($M=c_d \,\sqrt{M} \,  |u_0|_{BV}=1$ to simplify).
By Fatou's lemma $\liminf J_n^2 \geq \int_{|z| < r_\ast} |z|^2 \, (+\infty) \, \dif z$ and
$\liminf I_n \geq \int_{|z| > r_\ast} \|u_0(\cdot+z)-u_0(\cdot) \|_{L^1} \, |G'_d(\lambda) -G_d(\lambda) \, \ln |z|| \, |z|^{-d-\lambda} \, \dif z$. This is not possible since these integrals can not be both finite at the same time.}
\end{enumerate}
\end{remark}

\section{Continuous dependence in the critical and subcritical cases}
\label{sec-sub}

Since we can not use Theorem~\ref{th:nonlin-o}
any more, we start from Lemma~\ref{lem:kuznetsov} and take
advantage of the homogeneity of the fractional Laplacian. 
We thus use the Kruzhkov type doubling of variables techniques introduced
in \cite{Kru70} along with ideas from
\cite{Kuz76}; see also \cite{ScTa92,CoGrLo96,Dro03,JaWi03,DrIm06,Ali07,RoYo07,CiJa11,KaUl11,AlCiJa12,CiJapr}
for other applications of this technique to
nonlocal equations. We recall that the
idea is to consider $v$ to be a function of $(x,t)$, 
$u$ to be a function of $(y,s)$, and use the 
approximate unit $\phi^{\epsilon,\nu}(x,t,y,s)$ 
in \eqref{nat:test-kuznetsov} as a test function. For brevity, we do not specify
the variables  of~$u,v$, and $\phi^{\epsilon,\nu}$ when the context is
clear. Finally, we recall that $\dif w=\dif x \, \dif t \, \dif y \, \dif s$.

\subsection{A technical lemma}
In order to adapt the ideas of~\cite{Kuz76} to the nonlocal case, we
need the following Kato type of inequality. 
The reader could skip this technical subsection at the first reading.

\begin{lemma}\label{lem-kato}
Let~$\alpha \in (0,2)$,~$c,\tilde{c} \in \R$,~$\gamma,\tilde{\gamma} \in \R$ and~$I$ be a real interval with a positive lower bound. Let~$u,v \in L^1(Q_T)$,~$\varphi$ satisfy~\eqref{Aflux} and~$\phi^{\epsilon,\nu}$ be the test function in~\eqref{nat:test-kuznetsov}. Then
\begin{equation*}
\begin{split}
& \E\\ 
& := \int_{Q_{T}^2} \int_{|z| \in I} \sgn(v(x,t)-u(y,s))\\
& \quad \cdot\frac{\left\{\varphi \left(v (x+\tilde{c} \, |z|^{\tilde{\gamma}-1} \, z ,t)\right)-\varphi \left(u(y+c \, |z|^{\gamma-1} \, z,s)\right)\right\}-\left\{\varphi(v(x,t))-\varphi(u(y,s))\right\}}{|z|^{d+\alpha}}\\ 
& \quad \cdot\phi^{\epsilon,\nu}(x,t,y,s) \, \dif z \, 
   \dif w\\
& \leq \int_{Q_{T}^2}  \int_{|z| \in I} \left|\varphi(v(x,t))-\varphi(u(y,s))\right|  \theta_\nu(t-s) \frac{\rho_\epsilon \left(x-y+h(z)\right)-\rho_\epsilon(x-y)}{|z|^{d+\alpha}} \,\dif z \, \dif w,
\end{split}
\end{equation*}
with~$h(z):=(\tilde{c} \, |z|^{\tilde{\gamma}-1}-c \, |z|^{\gamma-1}) \, z$.
In particular, if~$c=\tilde{c}$ and~$\gamma=\tilde{\gamma}$, then~$\E \leq 0$.
\end{lemma}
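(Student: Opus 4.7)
The plan is to split the numerator of $\E$ into the shifted difference $\{\varphi(v(x+\tilde c|z|^{\tilde\gamma-1}z,t))-\varphi(u(y+c|z|^{\gamma-1}z,s))\}$ minus the unshifted difference $\{\varphi(v(x,t))-\varphi(u(y,s))\}$, and to treat the two by different means. The unshifted part is handled by the Kato-type identity \eqref{nat:key}, which thanks to the monotonicity of $\varphi$ gives $\sgn(v(x,t)-u(y,s))(\varphi(v(x,t))-\varphi(u(y,s)))=|\varphi(v(x,t))-\varphi(u(y,s))|$ and so contributes negatively to the upper bound. For the shifted part no such identity is available — the arguments of $v$ and $u$ have been translated — but the crude pointwise bound $|\sgn|\le 1$ immediately yields $\sgn(v(x,t)-u(y,s))(\varphi(v_{\mathrm{sh}})-\varphi(u_{\mathrm{sh}}))\le|\varphi(v_{\mathrm{sh}})-\varphi(u_{\mathrm{sh}})|$, where the subscript sh indicates the translated arguments. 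Altogether $\E$ will be bounded by the difference of the two $|\cdot|$-integrands, divided by $|z|^{d+\alpha}$ and tested against $\phi^{\epsilon,\nu}$.

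The key step is then to transfer the translations inside $v$ and $u$ onto the mollifier $\rho_\epsilon$. For fixed $z,t,s$ I will apply the unit-Jacobian change of variables $X := x+\tilde c|z|^{\tilde\gamma-1}z$, $Y := y+c|z|^{\gamma-1}z$ in the shifted $(x,y)$-integral; then $X-Y=(x-y)+h(z)$ with $h(z)=(\tilde c|z|^{\tilde\gamma-1}-c|z|^{\gamma-1})z$, while the temporal factor $\theta_\nu(t-s)$ is unaffected. Consequently the shifted $|\cdot|$-integral in $(x,y)$ equals the unshifted $|\cdot|$-integral (in the dummy names $X,Y$) tested against $\rho_\epsilon(X-Y-h(z))$, and subtracting the unshifted term produces exactly the stated difference of mollifiers, up to the sign in front of $h$.

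Finally, since $h(-z)=-h(z)$ while the weight $|z|^{-d-\alpha}$ and the domain $\{|z|\in I\}$ are both even in $z$, the substitution $z\mapsto -z$ in the outer integral turns $\rho_\epsilon(x-y-h(z))$ into $\rho_\epsilon(x-y+h(z))$, matching the statement. The last assertion is then immediate: when $c=\tilde c$ and $\gamma=\tilde\gamma$ one has $h\equiv 0$, so the right-hand side vanishes identically and $\E\le 0$. I do not expect any serious obstacle; the only delicate point to verify is that the Fubini-type interchanges and the change of variables are licit, but this follows from the hypothesis that $I$ has a positive lower bound — so $|z|^{-d-\alpha}$ is bounded on $I$ and integrable against the compactly supported $\phi^{\epsilon,\nu}$ — together with the local Lipschitz character of $\varphi$ on the essential range of $u$ and $v$.
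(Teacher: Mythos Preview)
Your proposal is correct and follows essentially the same approach as the paper: bound the integrand using \eqref{nat:key} on the unshifted difference and the crude $|\sgn|\le 1$ on the shifted one, then transfer the translations from $u,v$ onto $\rho_\epsilon$ via the unit-Jacobian change of variables in $(x,y)$ combined with the parity substitution $z\mapsto -z$. The only cosmetic difference is that the paper performs the $(x,y)$-shift and $z\mapsto -z$ as a single change of variables $(x+\tilde c|z|^{\tilde\gamma-1}z,\,t,\,y+c|z|^{\gamma-1}z,\,s,\,-z)\mapsto(x,t,y,s,z)$, whereas you do them in two steps; both routes yield exactly the same bound.
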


\begin{proof}
Note that~$\E$ is well-defined as ``convolution-like integral
of~$L^1$-functions.'' Indeed,~$\phi^{\epsilon,\nu}(x,t,y,s)=\theta_\nu(t-s)
\, \rho_\epsilon(x-y)$, where~$\theta_\nu$ and~$\rho_\epsilon$ are
approximate units, so that by Fubini,   
\begin{equation*}
\begin{split}
& \int_{Q_{T}^2} \int_{|z| \in I} \phi^{\epsilon,\nu} \\
& \cdot \left|\frac{\left\{ \varphi \left(v (x+\tilde{c} \, |z|^{\tilde{\gamma}-1} \, z ,t)\right)-\varphi \left(u(y+c \, |z|^{\gamma-1} \, z,s)\right)\right\}-\{ \varphi(v)-\varphi(u)\}}{|z|^{d+\alpha}} \right|  \dif z \, \dif w \\
& \leq 2 \left(\|\varphi(u)\|_{L^1(Q_T)}+\|\varphi(v)\|_{L^1(Q_T)}\right)  \int_{|z|\in I} \frac{\dif z}{|z|^{d+\alpha}}<+\infty,
\end{split}
\end{equation*} 
since~$u$ and~$v$ are~$L^\infty \cap L^1$,~$\varphi$ is~$W_{\scriptstyle \text{loc}}^{1,\infty}$ with~$\varphi(0)=0$, and~$\inf I>0$.

Then by~\eqref{nat:key}
and the nonnegativity of~$\phi^{\epsilon,\nu}$,
\begin{equation*}
\begin{split}
\E & \leq \int_{Q_{T}^2} \int_{|z| \in I} \phi^{\epsilon,\nu} \\
& \quad \cdot\frac{\left|\varphi \left(v (x+\tilde{c} \, |z|^{\tilde{\gamma}-1} \, z ,t)\right)-\varphi \left(u(y+c \, |z|^{\gamma-1} \, z,s)\right)\right| -|\varphi(v)-\varphi(u) |}{|z|^{d+\alpha}} \, \dif z \, 
   \dif w\\
&  = \int_{Q_{T}^2}  \int_{|z| \in I} |\varphi(v)-\varphi(u)| \\
& \quad   \cdot\underbrace{
\left\{
\phi^{\epsilon,\nu} 
(x+\tilde{c} \, |z|^{\tilde{\gamma}-1} \, z,t,y+c \, |z|^{\gamma-1} \, z,s)
-\phi^{\epsilon,\nu}\right\}
}_{=\theta_\nu(t-s) \left\{ \rho_\epsilon 
\left(x-y+(\tilde{c} \, |z|^{\tilde{\gamma}-1} - c \, |z|^{\gamma-1} ) \, z\right)
-\rho_\epsilon(x-y) \right\}} \frac{\dif z}{|z|^{d+\alpha}} \, \dif w;
\end{split}
\end{equation*}
the last line has been obtained by splitting the integral in two
pieces and using the change of variable $
(x+\tilde{c} \, |z|^{\tilde{\gamma}-1} \, z,t,y+c \, |z|^{\gamma-1} \, z,s,-z)
\mt (x,t,y,s,z)$. The proof is complete.
\end{proof}

\subsection{Proof of Theorem~\ref{th:nonlin}} During the proof we
freeze the nonlinear diffusion functions and  use a sort of
linearization procedure. The techniques could look a little
bit like the ones in Young measure theory and kinetic
formulations~\cite{LiPeTa94,BoPe98,ChKa06}.  

\begin{proof}[Proof of Theorem~\ref{th:nonlin}] \
\smallskip

\noindent {\bf 1.} {\it Initial reduction.} We first reduce the proof
to the case where 
\begin{equation}\label{hnd}
\begin{cases}
v_0=u_0,\\
\mbox{$\varphi'$ and~$\psi'$ vanish outside~$I(u_0)$ and take values in~$[\, \underline{\Lambda},\overline{\Lambda}\,]$,}
\end{cases}
\end{equation}
with~$I(u_0)=(\essinf u_0,\esssup u_0)$ and for some~$\overline{\Lambda} \geq \underline{\Lambda}>0$.
Let us justify that we can do this without loss of generality.

Since $u$ takes its values
in~$I(u_0)$ by~\eqref{nat:nonincrease}, we can redefine $\varphi$ 
to be constant outside this interval without changing the solutions
of the initial-value problem \eqref{1}. Hence 
$\overline{\Lambda}$ could be taken as a Lipschitz constant
of~$\varphi$ on $I(u_0)$. In a similar way, we could also modify~$\psi$
outside~$I(u_0)$ if~$v_0=u_0$. The last assumption is no restriction. Indeed,
by \eqref{nat:L1-contraction},
$$
\|u-v\|_{C([0,T];L^1)} \leq \|u-w\|_{C([0,T];L^1)}+\underbrace{\|w-v\|_{C([0,T];L^1)}}_{\leq \|u_0-v_0\|_{L^1}}
$$
for the entropy solution~$w$ of~\eqref{bb1} with initial data
$u_0$; hence, \eqref{main-esti-first} of Theorem~\ref{th:nonlin} holds
for~$u-v$ whenever it does for~$u-w$. Finally,
if~$\underline{\Lambda}$ does not exist, we can always consider 
sequences~$\varphi_n(\xi):=\varphi(\xi)+\frac{\xi}{n}$
and~$\psi_n(\xi):=\psi(\xi)+\frac{\xi}{n}$ for which it does. The
associated entropy solutions~$u_n$ and~$v_n$ 
respectively converge to~$u$ and~$v$ in~$C([0,T];L^1)$ by e.g.
Theorem~\ref{th:nonlin-o}. Consequently, if we could
prove \eqref{main-esti-first} for $u_n-v_n$, it would follow for $u-v$
by going to the limit. 

In the rest of the proof we always assume \eqref{hnd}.

\medskip

\noindent {\bf 2.} {\it Applying Kuznetsov.} Let
us use the entropy inequality~\eqref{entropy_ineq} for~$v=v(x,t)$
with~$k=u(y,s)$ fixed and~$\phi(x,t):=\phi^{\epsilon,\nu}(x,t,y,s)$. By
Remark~\ref{rem-def-cont} and an integration of $(y,s)$ over $Q_T$, we
find that  
\begin{equation*}
\begin{split}
& \int_{Q_{T}^2} \Big(|v-u|\,\partial_t \phi^{\epsilon,\nu}+q_g(v,u) \cdot \nabla_x \phi^{\epsilon,\nu}\Big) \, \dif w\\ 
& +\int_{Q_{T}^2} |\psi(v)-\psi(u)| \, \Levy^{\alpha}_{r}[\phi^{\epsilon,\nu}(\cdot,t,y,s)](x)  \, \dif w\\ 
&  +\int_{Q_{T}^2} \sgn (v-u) \, \Levy^{\alpha,r}[\psi(v(\cdot,t))](x) \,\phi^{\epsilon,\nu} \, \dif w\\
&-\int_{\R^d \times Q_T} |v(x,T)-u(y,s)|\,\phi^{\epsilon,\nu}(x,T,y,s) \, \dif x \, \dif y \, \dif s\\
& +\int_{\R^d \times Q_T} |v_0(x)-u(y,s)|\,\phi^{\epsilon,\nu}(x,0,y,s) \, \dif x \, \dif y \, \dif s \geq 0.
\end{split}
\end{equation*}
Inserting this inequality into the Kuznetsov inequality \eqref{kuz}, we obtain
for all~$r,\epsilon>0$ and~$T>\nu>0$, 
\begin{equation}\label{nat:cnl-1}
\begin{split}
&\|u(\cdot,T)-v(\cdot,T)\|_{L^{1}} \leq \\
& C(d) \, |u_0|_{BV} \, \epsilon+2\,(m_u(\nu) \vee m_v(\nu))\\
& \underbrace{+\int_{Q_{T}^2}(q_g-q_f)(v,u) \cdot \nabla_x  \phi^{\epsilon,\nu} \, \dif w}_{=:\E_1}\\
&\underbrace{+\int_{Q_{T}^2} \Big(|\psi(v)-\psi(u)| \, \mathcal{L}^{\alpha}_r[\phi^{\epsilon,\nu}(\cdot,t,y,s)](x)+ |\varphi(v)-\varphi(u)| \,\mathcal{L}^{\alpha}_r[\phi^{\epsilon,\nu}(x,t,\cdot,s)](y)\Big) \,
\dif w}_{=:\E_2} \\
&\underbrace{+\int_{Q_{T}^2} \sgn(v-u)\,
  \left(\Levy^{\alpha,r}[\psi(v(\cdot,t))](x)-\Levy^{\alpha,r}[\varphi(u(\cdot,s))](y)\right)
  \phi^{\epsilon,\nu} \, \dif w}_{=:\E_3}
\end{split}
\end{equation}
where~$C(d)=C_{\rho}$ from~\eqref{kuz}. During the proof,~$C(d)$ will denote various constant depending only on~$d$. 

\medskip

\noindent {\bf 3.} {\it Estimates of $\E_1$ and $\E_2$.} A standard estimate shows that
\begin{align}\label{esti-kuz}
\E_1 & \leq  T \, |u_0|_{BV} \, \|f'-g'\|_\infty,
\end{align}
see e.g. \cite{Daf72,Luc80,Daf05}. Let us estimate~$\E_2$.  By Taylor's formula, 
\begin{align*}
\rho_\epsilon(x+z)-\rho_\epsilon(x)-\nabla \rho_\epsilon(x) \cdot z
= \int_0^1 (1-\tau) \, \nabla^2 \rho_\epsilon(x+\tau \, z) \cdot z^2 \, \dif \tau
\end{align*}
for all~$x,z \in \R^d$. Since~$\rho_\epsilon \in C^\infty_c(\R^d)$, we infer that~$\Levy^\alpha_r[\rho_\epsilon] \in L^1(\R^d)$ with  
\begin{align*}
 \|\Levy^\alpha_r[\rho_\epsilon]\|_{L^1} &\leq G_d(\alpha) \int_{|z|<r}  \int_0^1  (1-\tau) \, |z|^{-d+2-\alpha} \int_{\R^d} |\nabla^2 \rho_\epsilon(x+\tau \, z)|  \, \dif x  \, \dif \tau\, \dif z \\
& = C(d,\alpha,\epsilon) \, r^{2-\alpha}.
\end{align*}
Moreover, by Definitions~\eqref{nat:Levy-form} and~\eqref{nat:test-kuznetsov},
$$
\Levy^\alpha_r[\phi^{\epsilon,\nu}(\cdot,t,y,s)](x)=\theta_\nu(t-s) \, \Levy^\alpha_r[\rho_\epsilon](x-y).
$$ 
By Fubini and the convolution like structure of the integral, it follows that 
\begin{align*}
& \int_{Q_{T}^2} |\psi(v(x,t))-\psi(u(y,s))| \, \mathcal{L}^{\alpha}_r[\phi^{\epsilon,\nu}(\cdot,y,t,s)](x) \, \dif w \\ 
& \leq \left(\|\psi(v)\|_{L^1(Q_T)}+\|\psi(u)\|_{L^1(Q_T)}\right) C(d,\alpha,\epsilon) \, r^{2-\alpha},
\end{align*} 
since $\int \theta_\nu =1$. In a similar way we can
estimate the $\varphi$-integral and conclude that
\begin{align}
\E_2 \leq C_\epsilon \, r^{2-\alpha}.   \label{nat:end2}
\end{align}
From now on $C_\epsilon$ will denote various constants depending among
other things on~$\epsilon$, but not on~$r,\nu$. For later use we note that
$\E_2\ra0$ as~$r,\nu \da 0$ and $\epsilon$ is fixed.

\medskip
\noindent {\bf 4.} {\it Estimate of $\E_3$ -- the linear case.} We
consider the case $\varphi' \equiv a$ and~$\psi' \equiv b$ for~$a,b >0$. In this case
\begin{equation}\label{lp-form}
\begin{split}
\E_3 &= G_d(\alpha) \int_{Q_{T}^2} \int_{|z|>r} \sgn (v-u) \, \phi^{\epsilon,\nu} \\
& \quad \cdot\frac{a \left(v(x+z,t)-v\right)  - b \left(u(y+z,s)-u\right)}{|z|^{d+\alpha}}
  \, \dif z \, \dif w.
  \end{split}
\end{equation}

By the change of variables $z \mt b^{\frac{1}{\alpha}} \,
z$, we see that
\begin{align*}
 b \, \Levy^{\alpha,r}[v(\cdot,t)](x) & = G_d(\alpha) \int_{|z|>r} \frac{v(x+z,t)-v(x,t)}{|b^{-\frac{1}{\alpha}} \, z|^{d+\alpha}} \, b^{-\frac{d}{\alpha}} \, \dif z \\
& = G_d(\alpha) \int_{|z|>b^{-\frac{1}{\alpha}} r} \frac{v (x+b^{\frac{1}{\alpha}} \, z ,t)-v(x,t)}{|z|^{d+\alpha}} \, \dif z, 
\end{align*}
and similarly that
\begin{equation*}
a \, \Levy^{\alpha,r}[u(\cdot,s)](y)=G_d(\alpha) \int_{|z|>a^{-\frac{1}{\alpha}} r} \frac{u (y+a^{\frac{1}{\alpha}} \, z,s )-u(y,s)}{|z|^{d+\alpha}} \, \dif z.
\end{equation*}
It follows that
\begin{equation}\label{cut-change-nonlin}
\begin{split}
\E_3  & =  G_d(\alpha)
\int_{Q_{T}^2} \int_{(a \vee b)^{-\frac{1}{\alpha}} r <|z| < (a \wedge b)^{-\frac{1}{\alpha}} r} \dots \frac{\dif z}{|z|^{d+\alpha}} \\
& \quad +G_d(\alpha) \int_{Q_{T}^2} \int_{|z|>(a \wedge b)^{-\frac{1}{\alpha}} r} \sgn(v-u) \\ 
& \quad \quad \cdot\frac{\left(v (x+b^{\frac{1}{\alpha}} \, z ,t)-u(y+a^{\frac{1}{\alpha}} \, z,s)\right)-(v-u)}{|z|^{d+\alpha}} \, \phi^{\epsilon,\nu} \, \dif z \, 
   \dif w \\
& =: \E_{3,1}+\E_{3,2},
\end{split}
\end{equation}
where~$\E_{3,1}$ contains only the~$u$-terms if~$a \geq b$, or only
the~$v$-terms in the other case. In the $u$-case, e.g.,
\begin{equation*}
\begin{split}
\E_{3,1}  &=  G_d(\alpha)
\int_{Q_{T}^2} \int_{a^{-\frac{1}{\alpha}} r <|z| < b^{-\frac{1}{\alpha}} r} \sgn(u-v) \, \frac{u (y+a^{\frac{1}{\alpha}} \, z ,s)-u}{|z|^{d+\alpha}} \, \phi^{\epsilon,\nu} \, \dif z \, 
   \dif w.
\end{split}
\end{equation*}

The estimates for $\E_{3,1}$ are similar in both cases, and we only
detail the $u$-case. As in the proof of Lemma~\ref{lem-kato}, we use that 
\begin{equation*}
\begin{split}
& \sgn(u(y,s)-v(x,t)) \, \left(u(y+a^{\frac{1}{\alpha}} \, z ,s)-u(y,s)\right)\\
& \leq \left|u (y+a^{\frac{1}{\alpha}} \, z ,s)-v(x,t) \right| -\left|u(y,s) -v(x,t) \right|,
\end{split}
\end{equation*}
to deduce that
\begin{align*}
\E_{3,1} & \leq G_d(\alpha)\int_{Q_{T}^2} \int_{a^{-\frac{1}{\alpha}} r <|z| < b^{-\frac{1}{\alpha}} r} \frac{\left|u(y+a^{\frac{1}{\alpha}} \, z ,s)-v(x,t) \right| -\left|u -v  \right|}{|z|^{d+\alpha}} \, \phi^{\epsilon,\nu}  \, \dif z \, \dif w \\ 
& = G_d(\alpha) \int_{Q_{T}^2} |u-v| \\ 
& \quad \cdot\int_{a^{-\frac{1}{\alpha}} r <|z| < b^{-\frac{1}{\alpha}} r} \underbrace{\left(\phi^{\epsilon,\nu}(x,t,y+a^{\frac{1}{\alpha}} \, z,s)-\phi^{\epsilon,\nu}\right)}_{=\theta_\nu(t-s) \left( \rho_\epsilon(x-y-a^{\frac{1}{\alpha}} \, z)-\rho_{\epsilon}(x-y) \right)} \, |z|^{-d-\alpha} \, \dif z \, \dif w.
\end{align*}
We continue as in the derivation of \eqref{nat:end2}, and use a
Taylor expansion with integral remainder of $\rho_\epsilon$. Since the first order term
contains the factor
$$
\int_{a^{-\frac{1}{\alpha}} r <|z| < b^{-\frac{1}{\alpha}} r}
\frac{z}{|z|^{d+\alpha}} \, \dif z=0, 
$$
we find an estimate
similar to \eqref{nat:end2}, namely
\begin{equation}\label{esti-reste-change-nonlin}
\E_{3,1} \leq C_\epsilon  \left(\|u\|_{L^1(Q_T)} + \|v\|_{L^1(Q_T)} \right) r^{2-\alpha}.
\end{equation}
We emphasize that $C_\epsilon$ can be chosen to be independent of $a$ and $b$
by \eqref{hnd} (more precisely $C_\epsilon = C(d,\alpha,\epsilon,\underline{\Lambda},\overline{\Lambda})$;
this will be important in the next step. 

\medskip
\noindent {\bf 5.} {\em Estimate of $\E_{3,2}$}. Note that
$a,b$ are arbitrary reals such that \eqref{hnd}
holds, i.e. $\overline{\Lambda} \geq a,b \geq \underline{\Lambda}$, and let $r_2 \geq r_1 >0$. Since 
$\underline{\Lambda}>0$ and $r$ will be sent to zero, we assume without loss of
generality that $r_1 > \underline{\Lambda}^{-\frac{1}{\alpha}} \, r$. In particular,~$r_1 > (a \wedge b)^{-\frac{1}{\alpha}} \, r$. Then
\begin{equation}\label{cut-nonline-E3}
\begin{split}
\E_{3,2} & = \sum_{i=1}^3
G_d(\alpha) \int_{Q_{T}^2} \int_{|z| \in I_i} \sgn(v-u) \\ 
 &  \quad \cdot\frac{\left(v (x+b^{\frac{1}{\alpha}} \, z ,t)-u (y+a^{\frac{1}{\alpha}} \, z,s)\right)-(v-u)}{|z|^{d+\alpha}} \, \phi^{\epsilon,\nu}\, \dif z \, 
   \dif w \\
& =:  \sum_{i=1}^3 \E_{3,2,i}, 
\end{split}
\end{equation}
where~$I_1=(r_2,+\infty)$,~$I_2=(r_1,r_2)$ and~$I_3=((a \wedge
b)^{-\frac{1}{\alpha}} \, r, r_1 )$. 

By adding and subtracting $ 
\sgn(v-u) \, u (y+b^{\frac{1}{\alpha}} \, z ,s)
$ and using Lemma~\ref{lem-kato}
with~$c=\tilde{c}=b^{\frac{1}{\alpha}}$ and~$\gamma=\tilde{\gamma}=1$,
we find that
\begin{equation*}
\begin{split}
\E_{3,2,i} & \leq G_d(\alpha) \int_{Q_{T}^2} \int_{|z| \in I_i} \sgn(v-u)  \, \frac{u (y+b^{\frac{1}{\alpha}} \, z ,s)-u (y+a^{\frac{1}{\alpha}} \, z,s)}{|z|^{d+\alpha}} \, \phi^{\epsilon,\nu} \, \dif z \, \dif w.
\end{split}
\end{equation*}
By the~$BV$-regularity of~$u$, we then immediately deduce that  
\begin{equation*}
\E_{3,2,2} \leq 
G_d(\alpha) \, |u|_{L^1(0,T;BV)} \, |a^{\frac{1}{\alpha}}-b^{\frac{1}{\alpha}}| \int_{r_1<|z|<r_2} \frac{|z|\,\dif z}{|z|^{d+\alpha}}.  
\end{equation*}
Moreover, going back to the original
variables~$a^{\frac{1}{\alpha}} \, z \mt z$ and~$b^{\frac{1}{\alpha}}
\, z\mt z$, we find that
\begin{align*}
& \int_{Q_{T}^2} \int_{|z| >r_2} \sgn(v-u) \frac{u (y+a^{\frac{1}{\alpha}} \, z ,s)}{|z|^{d+\alpha}} \, \phi^{\epsilon,\nu}\, \dif z \, \dif w\\
& =a \int_{Q_{T}^2} \int_{|z| >a^{\frac{1}{\alpha}} \, r_2} \sgn(v-u) \frac{u (y+ z ,s)}{|z|^{d+\alpha}} \, \phi^{\epsilon,\nu} \, \dif z \, \dif w,
\end{align*}
and a similar formula for the $b$-term. Hence we find that
\begin{equation*}
\begin{split}
 \E_{3,2,1} \leq \ & G_d(\alpha) \, (b-a) \int_{Q_{T}^2} \int_{|z| >(a \vee b)^{\frac{1}{\alpha}} \, r_2} \sgn(v-u) \, \frac{u(y+ z ,s)}{|z|^{d+\alpha}}  \, \phi^{\epsilon,\nu}  \, \dif z \, \dif w \\
& +G_d(\alpha) \, \sgn(a-b) \, (a \wedge b) \int_{Q_{T}^2} \int_{(a \wedge b)^{\frac{1}{\alpha}} \, r_2 < |z| <(a \vee b)^{\frac{1}{\alpha}} \, r_2} \dots,
\end{split}
\end{equation*}
where the integrands are the same. Since $\phi^{\epsilon,\nu}$ is an
approximate unit,  
\begin{equation*}
\E_{3,2,1}  \leq C(d) \, \frac{G_d(\alpha)}{\alpha} \, \|u\|_{L^1(Q_T)} \, \frac{|a-b|}{a \vee b} \, r_2^{-\alpha},
\end{equation*}
where $C(d)=2 \, S_d$.

It remains to estimate~$\E_{3,2,3}$ in~\eqref{cut-nonline-E3}. By Lemma~\ref{lem-kato}, with~$c=a^{\frac{1}{\alpha}}$ and~$\tilde{c}=b^{\frac{1}{\alpha}}$, 
\begin{equation}\label{repa-p}
\begin{split}
\E_{3,2,3} & \leq G_d(\alpha) \int_{Q_{T}^2}  \int_{(a \wedge b)^{-\frac{1}{\alpha}} \, r <|z|<r_1} |v-u| \, \theta_\nu(t-s)  \\
 & \quad \cdot \left\{ \rho_\epsilon (x-y+h(z))-\rho_\epsilon(x-y) \right\} |z|^{-d-\alpha} \, \dif z \, \dif w 
\end{split}
\end{equation}
with~$h(z):=(b^{\frac{1}{\alpha}}-a^{\frac{1}{\alpha}}) \, z$. After
a Taylor expansion of~$\rho_\epsilon$ with integral
remainder, we find that
\begin{equation*}
\begin{split}
\E_{3,2,3} & \leq G_d(\alpha)  \int_{Q_{T}^2} \int_{(a \wedge b)^{-\frac{1}{\alpha}} \, r <|z|<r_1} \int_0^1 (1-\tau) \, |v-u| \, \theta_\nu(t-s) \, |z|^{-d-\alpha}  \\
& \quad  \cdot\nabla^2 \rho_\epsilon \left(x-y + \tau \, h(z)\right) \cdot h(z)^2 \, \dif \tau \, \dif z \, \dif w.
\end{split}
\end{equation*}
Remember that the integral of the first order term in $z$ is zero by symmetry. 
By a standard argument, $|v-u|$ is~$BV$ in $y$ as composition
of a $BV$ with a Lipschitz function (cf. e.g.~\cite{BoPe98}). Hence, by an integration by parts with respect to~$y$,  
\begin{equation*}
\begin{split}
 \E_{3,2,3}   
& \leq G_d(\alpha)  \int_0^{T} \int_{Q_{T}} \int_{(a \wedge b)^{-\frac{1}{\alpha}} \, r <|z|<r_1} \int_0^1 (1-\tau) \, \theta_\nu(t-s) \, |z|^{-d-\alpha} \,  \\
& \quad \cdot\left\{\int_{\R^d} \nabla \rho_\epsilon \left(x-y + \tau \, h(z) \right) \cdot h(z) \, h(z) \cdot \dif \nabla_y |v(x,t)-u(\cdot,s)|(y)\right\}\\ 
& \quad \dif \tau \, \dif z \, \dif x \, \dif t \, \dif s.
\end{split}
\end{equation*}
We use the notation~$\dif \nabla_y |v(x,t)-u(\cdot,s)|(y)$ in
case~$\nabla_y |v-u|$ is a measure. Then $|\nabla_y |v-u|| \leq
|\nabla u|$ in the sense of measures since $y$ is the space
variable of $u$. It follows that 
\begin{equation*}
\begin{split}
 \E_{3,2,3} & \leq  G_d(\alpha) \int_0^{T} \int_{Q_{T}} \int_{|z|<r_1}  \int_0^1 (1-\tau) \, \theta_\nu(t-s) \, |z|^{-d-\alpha} \, |h(z)|^2\\
& \quad \cdot\left\{\int_{\R^d} \left|\nabla \rho_\epsilon \left(x-y + \tau \, h(z) \right) \right|  \dif |\nabla u(\cdot,s)|(y) \right\} \dif \tau \, \dif z \, \dif x \, \dif t \, \dif s. 
\end{split}
\end{equation*}
By Fubini\footnote{applied for fixed~$s$, so that~$\dif
  |\nabla u(\cdot,s)|(y) \, \dif z \, \dif x \, \dif t$ 
is a tensor product of~$\sigma$-finite measures!} we integrate with
respect to~$(x,t)$ before~$(y,s)$, and then we use that
$h(z)=(b^{\frac{1}{\alpha}}-a^{\frac{1}{\alpha}}) \, z$ and $\int
|\nabla \rho_\epsilon| =\frac{1}{\epsilon} \int |\nabla \rho| =
\frac{C(d)}{\epsilon}$ (by \eqref{nat:test-kuznetsov}), to see that
\begin{equation}\label{rep-p-b}
\begin{split}
 \E_{3,2,3} & \leq G_d(\alpha) \,  \int_0^{T} 
 \int_{|z|<r_1} \int_0^1 (1-\tau) \, |z|^{-d-\alpha} \\
& \quad  \cdot |h(z)|^2 \, |u(\cdot,s)|_{BV} \,  \dif \tau \, \dif z
\, \dif s\,\int_{Q_T} \theta_\nu|\nabla
\rho_\epsilon|\,\dif x\,\dif t\\
&\leq  C(d) \, \frac{G_d(\alpha)}{2-\alpha} \, |u|_{L^1(0,T;BV)} \, (a^{\frac{1}{\alpha}}-b^{\frac{1}{\alpha}})^2 \, \frac{r_1^{2-\alpha}}{\epsilon}.
\end{split}
\end{equation}

\medskip
\noindent {\bf 6.} {\em Estimate of $\E_{3}$ -- conclusion in the linear case}.
By the estimates of {\bf 4} and {\bf 5}, \eqref{cut-change-nonlin},
\eqref{cut-nonline-E3}, etc., we can then conclude that
\begin{equation}
\label{esti-diff-bc}
\begin{split}
\E_3 & \leq \E_{3,1}+\E_{3,2,1}+\E_{3,2,2}+\E_{3,2,3}\\
&\leq C_\epsilon \left(\|u\|_{L^1(Q_T)} + \|v\|_{L^1(Q_T)} \right) r^{2-\alpha}\\
& \quad + C(d) \, G_d(\alpha) \, \bigg\{\frac{1}{\alpha} \, \|u\|_{L^1(Q_T)} \, \frac{|a-b|}{a \vee b} \, r_2^{-\alpha}\\
& \quad + |u|_{L^1(0,T;BV)} \, |a^{\frac{1}{\alpha}}-b^{\frac{1}{\alpha}}| \int_{r_1<|z|<r_2} \frac{|z|\,\dif z}{|z|^{d+\alpha}} \\
& \quad + \frac{1}{2-\alpha} \, |u|_{L^1(0,T;BV)} \, (a^{\frac{1}{\alpha}}-b^{\frac{1}{\alpha}})^2 \, \frac{r_1^{2-\alpha}}{\epsilon}\bigg\},
\end{split}
\end{equation}
for arbitrary $r_2 \geq r_1 > \underline{\Lambda}^{-\frac{1}{\alpha}} \, r$. 
Note that the $\frac{1}{a \vee b}$-term has to be handled with care
since it could be large in the general case when $\varphi'$ and
$\psi'$ can be degenerate. 

We conclude the estimate of $\E_3$ by choosing the values of constants $r_1$
and $r_2$. In the critical case where $\alpha=1$, we take $r_1=T
\wedge 1$ and~$r_2=1 \vee \frac{\|u_0\|_{L^1}}{(a \vee b) \, |u_0|_{BV}}$. Notice that if $|u_0|_{BV}=0$, then $u_0 \equiv 0$ as constant integrable function, and \eqref{main-esti-first} reduces to \eqref{nat:nonincrease}. In the sequel, we thus assume without loss of generality that $|u_0|_{BV} \neq 0$. Note then that
$+\infty>r_2\geq r_1 =T \wedge 1 > \underline{\Lambda}^{-\frac{1}{\alpha}} \, r$ for $r$
small enough ($r\da0$ in the end). 
By easy computation and Lemma~\ref{lem-tech-nl} of the Appendix, 
\begin{align*}
& |a-b| \int_{r_1<|z|<r_2} \frac{|z|\,\dif z}{|z|^{d+1}} = C \, |a-b| \, (\ln r_2-\ln r_1)\\
& \leq C \,|a-b| \left\{|\ln T|+\mathbf{1}_{\frac{\|u_0\|_{L^1}}{|u_0|_{BV}}>1} \, \ln \frac{\|u_0\|_{L^1}}{|u_0|_{BV}}+\left(-\ln (a \vee b)\right)^+ \right\}\\
& \leq C \, \Big\{\Big(1 + |\ln T| +|u_0|_{BV}^{-1} \, \Ent_1(u_0) \Big) \, |a-b|+|a \, \ln a- b \, \ln b|\Big\},
\end{align*}
where~$C=C(d)$ and where $\Ent_1(u_0)$ is defined in \eqref{log-entrop}. We finally deduce from~\eqref{esti-diff-bc} that, when~$\alpha=1$,
\begin{equation}\label{big-modif-2}
\begin{split}
\E_3 & \leq C_\epsilon \left(\|u\|_{L^1(Q_T)} + \|v\|_{L^1(Q_T)} \right)  r\\
& \quad +  C(d) \, \Big\{
\|u\|_{L^1(Q_T)} \, \frac{|u_0|_{BV}}{\|u_0\|_{L^1}} \,|a-b|\\
& \quad + \left(1 + |\ln T| +|u_0|_{BV}^{-1} \, \Ent_1(u_0)\right) |u|_{L^1(0,T;BV)} \,|a-b|\\
& \quad +|u|_{L^1(0,T;BV)} \, |a \, \ln a- b \, \ln b|\\
& \quad + T \, |u|_{L^1(0,T;BV)} \, (a-b)^2 \, \frac{1}{\epsilon} \Big\},
\end{split}
\end{equation}
for all~$T \wedge 1 > \underline{\Lambda}^{-1} \, r$. To divide by $\|u_0\|_{L^1}$,  we have assumed without loss of generality that we are not in the case where 
$\|u_0\|_{L^1} = 0$, for which \eqref{main-esti-first} also reduces to \eqref{nat:nonincrease}.

When~$\alpha>1$, we simply choose~$r_2=+\infty$ in~\eqref{esti-diff-bc} and we get
\begin{equation}\label{big-modif-1}
\begin{split}
\E_3 & \leq C_\epsilon \left(\|u\|_{L^1(Q_T)} + \|v\|_{L^1(Q_T)} \right)  r^{2-\alpha}\\
& \quad +C(d) \,G_d(\alpha)\, \Big\{ \frac{1}{\alpha-1} \, |u|_{L^1(0,T;BV)} \, |a^{\frac{1}{\alpha}}-b^{\frac{1}{\alpha}}| \, r_1^{1-\alpha}\\
& \quad + \frac{1}{2-\alpha} \, |u|_{L^1(0,T;BV)} \, (a^{\frac{1}{\alpha}}-b^{\frac{1}{\alpha}})^2 \, \frac{r_1^{2-\alpha}}{\epsilon} \Big\},
\end{split}
\end{equation}
for all~$r_1>\underline{\Lambda}^{-\frac{1}{\alpha}} \, r$.

\medskip

\noindent {\bf 7.} {\it Estimate of $\E_3$ - the general case via
  linearization.}   
The idea is now to reduce to the linear case in step {\bf 4} by
freezing the ``diffusion coefficients'' $\varphi'(\xi)$
and~$\psi'(\xi)$. To do so, we introduce the function
\begin{equation}\label{si}
\chi_{a}^{b}(\xi):=\sgn (b-a) \, \mathbf{1}_{(a \wedge b,a \vee b)}(\xi),
\end{equation}
for $\xi,a,b \in \R$.  By \eqref{nat:cnl-1}, we then find that
\begin{equation}\label{e3-lp}
\begin{split}
\E_{3} & = G_d(\alpha) \int_{Q_{T}^2} \int_{|z|>r} \sgn(v-u)  \\
  & \quad \cdot\frac{\int_{v(x,t)}^{v(x+z,t)} \psi'(\xi) \, \dif \xi -\int_{u(y,s)}^{u(y+z,s)} \varphi'(\xi) \, \dif \xi}{|z|^{d+\alpha}}
  \, \phi^{\epsilon,\nu} \, \dif z \, \dif w\\
& = G_d(\alpha) \int_{Q_{T}^2} \int_{|z|>r} \int \sgn(v-u)  \\
  & \quad \cdot\frac{\chi_{v(x,t)}^{v(x+z,t)}(\xi) \, \psi'(\xi)  -\chi_{u(y,s)}^{u(y+z,s)}(\xi) \, \varphi'(\xi)}{|z|^{d+\alpha}}
  \, \phi^{\epsilon,\nu} \, \dif \xi \, \dif z \, \dif w.
\end{split}
\end{equation}
Let us notice that this integral is well-defined, since e.g. $\int |\chi_a^b (\xi)| \, \dif \xi = |b-a|$ and, $\varphi'$ and $\psi'$ are assumed bounded by \eqref{hnd}. 

For each~$\delta>0$, we define a regularized version of $\E_3$ as
\begin{equation}\label{def-a-lp}
\begin{split}
\E_{3}(\delta) & := G_d(\alpha) \int_{Q_{T}^2} \int_{|z|>r} \int \int \sgn(v-u)  \\
  & \quad \cdot \frac{\chi_{v(x,t)}^{v(x+z,t)}(\zeta) \, \psi'(\xi)  -\chi_{u(y,s)}^{u(y+z,s)}(\zeta) \, \varphi'(\xi)}{|z|^{d+\alpha}}
  \, \phi^{\epsilon,\nu} \, \omega_\delta(\xi-\zeta) \, \dif \zeta \, \dif \xi \, \dif z \, \dif w,
\end{split}
\end{equation}
where the approximate unit~$\omega_\delta(\xi):=\frac{1}{\delta} \, \omega
\left( \frac{\xi}{\delta} \right)$, and
\begin{equation*}
\omega \in C_b^\infty \cap L^1(\R), \quad \omega >0, \quad \int \omega =1. 
\end{equation*}
For each $\zeta,\xi \in \R$, let
$\Omega_\xi(\zeta):=\int_{-\infty}^\zeta\omega_\delta(\xi-w)\,\dif
w-\int_{-\infty}^0\omega_\delta(\xi-w)\,\dif w$, \label{po} and note that
$$
\int \chi_{v(x,t)}^{v(x+z,t)}(\zeta) \, \omega_\delta(\xi-\zeta) \, \dif \zeta=\int_{v(x,t)}^{v(x+z,t)} \Omega_\xi'(\zeta) \, \dif \zeta=\Omega_\xi(v(x+z,t))-\Omega_\xi(v(x,t)).
$$
Moreover, $\sgn (v-u)=\sgn \left(\Omega_\xi(v)-\Omega_\xi(u)\right)
$ since~$\Omega_\xi(\cdot)$ is increasing, and since~$\Omega_\xi(\cdot)$ is smooth and
vanishes at zero, $\Omega_\xi(u)$ and $\Omega_\xi(v)$ have similar
boundedness, integrability, and regularity properties as~$u$
and~$v$. It follows that 
\begin{equation*}
\begin{split}
& \E_{3}(\delta)\\
& =  G_d(\alpha) \int \int_{Q_{T}^2} \int_{|z|>r} \sgn \left(\Omega_\xi(v)-\Omega_\xi(u)\right) \phi^{\epsilon,\nu} \\
  & \quad \cdot\frac{\psi'(\xi) \left(\Omega_\xi(v(x+z,t))-\Omega_\xi(v)\right)  - \varphi'(\xi) \left(\Omega_\xi(u(y+z,s))-\Omega_\xi(u)\right)}{|z|^{d+\alpha}}
  \, \dif z \, \dif w \, \dif \xi.
\end{split}
\end{equation*}
This integrand has similar form and properties as the one
in~\eqref{lp-form} for fixed $\xi$!

We continue in the critical case when $\alpha=1$. We argue as in step
{\bf 4} with $a=\varphi'(\xi)$ and
$b=\psi'(\xi)$. By \eqref{big-modif-2} we get that for all $T \wedge 
1> \underline{\Lambda}^{-1} \, r$,  
\begin{equation*}
\begin{split}
\E_3(\delta) & \leq  \int C_\epsilon \left(\|\Omega_\xi(u)\|_{L^1(Q_T)} + \|\Omega_\xi(v)\|_{L^1(Q_T)} \right) r \, \dif \xi\\
& \quad + C(d) \, \int \bigg\{\|\Omega_\xi(u)\|_{L^1(Q_T)} \, \frac{|u_0|_{BV}}{\|u_0\|_{L^1}} \,|\varphi'(\xi)-\psi'(\xi)|\\
& \quad + \left(1 + |\ln T| +|u_0|_{BV}^{-1} \, \Ent_1(u_0)\right) |\Omega_\xi(u)|_{L^1(0,T;BV)} \,|\varphi'(\xi)-\psi'(\xi)|\\
& \quad +|\Omega_\xi(u)|_{L^1(0,T;BV)} \, |\varphi'(\xi) \, \ln \varphi'(\xi)- \psi'(\xi) \, \ln \psi'(\xi)|\\
& \quad + T \, |\Omega_\xi(u)|_{L^1(0,T;BV)} \, (\varphi'(\xi)-\psi'(\xi))^2 \, \frac{1}{\epsilon} \bigg\} \, \dif \xi\\
& \leq  C_\epsilon \, r  \int \|\Omega_\xi(u)\|_{L^1(Q_T)} + \|\Omega_\xi(v)\|_{L^1(Q_T)} \, \dif \xi \\
& \quad +C(d) \, \Big\{A \, \frac{|u_0|_{BV}}{\|u_0\|_{L^1}}\,   \|\varphi'- \psi'\|_\infty \\ 
& \quad + \left(1 + |\ln T| +|u_0|_{BV}^{-1} \, \Ent_1(u_0)\right) B \, \|\varphi'- \psi'\|_\infty \\
& \quad +B \, \|\varphi' \, \ln \varphi'- \psi' \, \ln \psi'\|_\infty  \\
& \quad +T \, B \,  \|\varphi'- \psi'\|_\infty^2 \, \frac{1}{\epsilon} \Big\},
\end{split}
\end{equation*}
with~$A = \int \|\Omega_\xi(u)\|_{L^1(Q_T)} \, \dif \xi$,~$B = \int |\Omega_\xi(u)|_{L^1(0,T;BV)} \, \dif \xi$, and
$$
\|\varphi'-\psi'\|_{\infty}=\esssup_{I(u_0)} |\varphi'-\psi'|.
$$
The supremum above can be taken only on $I(u_0)$, since $\varphi'$ and $\psi'$ are assumed to vanish outside this interval by \eqref{hnd}.
Note also that $C_\epsilon=C(d,\alpha,\epsilon,\underline{\Lambda},\overline{\Lambda})$ can be chosen independent of
$\varphi'(\xi)$ and~$\psi'(\xi)$ as discussed below
\eqref{esti-reste-change-nonlin}. 
A standard argument, see Appendix
\ref{app-app}, then reveals that
\begin{eqnarray}
\int \|\Omega_\xi(u)\|_{L^1(Q_T)} \, \dif \xi & = & \|u\|_{L^1(Q_T)},\label{YL}\\
\int |\Omega_\xi(u)|_{L^1(0,T;BV)} \, \dif \xi & = & |u|_{L^1(0,T;BV)}\label{YBV},
\end{eqnarray} 
and hence that $A \leq T \,
\|u_0\|_{L^1}$ and~$B \leq T \, |u_0|_{BV}$ by
\eqref{nat:nonincrease}. 

By standard computations given in Appendix \ref{app-app},
\begin{equation}\label{YC}
\lim_{\delta \da 0} \E_3(\delta)=\E_3, 
\end{equation}
and it follows after going to the limit in the estimate above, that
\begin{equation}\label{esti-diff-t-cc}
\begin{split}
\E_3 & \leq C_\epsilon \,  r\\
& \quad +C(d) \, \Big\{T \, \Ent_1(u_0) \, \|\varphi'- \psi'\|_\infty\\
& \quad + T \, (1 + |\ln T|) \, |u_0|_{BV} \,\|\varphi'- \psi'\|_\infty\\
& \quad + T \, |u_0|_{BV} \, \|\varphi' \, \ln \varphi'- \psi' \, \ln \psi'\|_\infty\\
& \quad + T^2 \, |u_0|_{BV} \, \|\varphi'- \psi'\|_\infty^2 \, \frac{1}{\epsilon}\Big\},
\end{split}
\end{equation}
for all~$T \wedge 1> \underline{\Lambda}^{-1} \, r$ when $\alp=1$.

When~$\alpha>1$, similar arguments
using~\eqref{big-modif-1} show that for
all~$r_1>\underline{\Lambda}^{-\frac{1}{\alpha}} \, r$,  
\begin{equation}\label{esti-diff-t-dc}
\begin{split}
\E_3 & \leq C_\epsilon \,  r^{2-\alpha}\\
& \quad +C(d) \, \Big\{\frac{G_d(\alpha)}{\alpha-1} \, T \, |u_0|_{BV} \,  \|(\varphi')^{\frac{1}{\alpha}}-(\psi')^{\frac{1}{\alpha}}\|_\infty \, r_1^{1-\alpha}\\
& \quad + \frac{G_d(\alpha)}{2-\alpha} \,T \, |u_0|_{BV} \,  \|(\varphi')^{\frac{1}{\alpha}}-(\psi')^{\frac{1}{\alpha}}\|_\infty^2 \, \frac{r_1^{2-\alpha}}{\epsilon}\Big\}.
\end{split}
\end{equation}

\medskip 

\noindent {\bf 8.} {\it Conclusion.} We have to insert the estimates
of the three preceding steps into \eqref{nat:cnl-1}. Let us begin by
the case where~$\alpha=1$. By~\eqref{esti-kuz},~\eqref{nat:end2}
and~\eqref{esti-diff-t-cc},  
\begin{equation*}
\begin{split}
\|u(\cdot,T)-v(\cdot,T)\|_{L^{1}} & \leq 2\, (m_u(\nu) \vee m_v(\nu)) +C_\epsilon \, r \\
& \quad +T \, |u_0|_{BV} \, \|f'-g'\|_\infty\\
&\quad +C(d) \, \bigg\{|u_0|_{BV} \, \epsilon\\
& \quad +T \,  \Ent_1(u_0) \, \|\varphi'- \psi'\|_\infty\\ 
& \quad + T \left(1 + |\ln T|\right) |u_0|_{BV} \,\|\varphi'- \psi'\|_\infty\\
& \quad + T \,  |u_0|_{BV} \, \|\varphi' \, \ln \varphi'- \psi' \, \ln \psi'\|_\infty\\
& \quad + T^2 \,  |u_0|_{BV} \, \|\varphi'- \psi'\|_\infty^2 \, \frac{1}{\epsilon}\bigg\},
\end{split}
\end{equation*}
for all~$r,\epsilon>0$ and~$T>\nu>0$ such that~$T \wedge 1 >
\underline{\Lambda}^{-1} \, r$. We complete the proof by sending $r$ and $\nu$
to zero, and taking $\epsilon=T \, \|\varphi'- \psi'\|_\infty$. 

When~$\alpha>1$, we find using \eqref{esti-diff-t-dc} that 
\begin{equation*}
\begin{split}
\|u(\cdot,T)-v(\cdot,T)\|_{L^{1}} & \leq 2\,(m_u(\nu) \vee m_v(\nu)) +C_\epsilon \, r^{2-\alpha} \\
& \quad +T \, |u_0|_{BV} \, \|f'-g'\|_\infty\\
& \quad +C(d) \, \bigg\{|u_0|_{BV} \, \epsilon\\
& \quad + \frac{G_d(\alpha)}{\alpha-1} \, T \,  |u_0|_{BV} \, \|(\varphi')^{\frac{1}{\alpha}}-(\psi')^{\frac{1}{\alpha}}\|_\infty \, r_1^{1-\alpha}\\
& \quad + \frac{G_d(\alpha)}{2-\alpha} \, T \,  |u_0|_{BV} \, \|(\varphi')^{\frac{1}{\alpha}}-(\psi')^{\frac{1}{\alpha}}\|_\infty^2 \, \frac{r_1^{2-\alpha}}{\epsilon}\bigg\},
\end{split}
\end{equation*}
for all~$r,\epsilon>0$,~$T>\nu>0$ and~$r_1>\underline{\Lambda}^{-\frac{1}{\alpha}} \, r$.
We conclude by choosing $\epsilon=T^\frac{1}{\alpha} \,
\|(\varphi')^{\frac{1}{\alpha}}-(\psi')^{\frac{1}{\alpha}}\|_\infty$
and $r_1=T^\frac{1}{\alpha}$. The proof of Theorem~\ref{th:nonlin} is complete.
\end{proof}

\begin{remark}\label{rem-l2}
\begin{enumerate}
\item \label{item-rem-l2} From the proof, we find that $C \leq C(d) \left(1+ \frac{G_d(\alpha)}{\alpha-1}+ \frac{G_d(\alpha)}{2-\alpha}\right)$ in \eqref{main-esti-first} when $\alpha>1$. By \eqref{properties-Gdalpha},  $\lim_{\alpha \ua 2} C(d,\alpha)$ is finite and only depends on $d$.
\item \label{rem-ec-2} In particular, $C \leq
  C(d) \left(1+ \frac{G_d(\alpha)}{\alpha-1}+ \frac{G_d(\alpha)}{2-\alpha}\right)$
 when~$\alpha>1$ also  in~\eqref{esti-time}.  
\end{enumerate}
\end{remark}

\subsection{Proof of Theorem~\ref{nat:th-Levy}}
Here no linearization procedure is needed since $\varphi=\psi$. The new
difficulty comes from the fact that the two L\'evy
measures are different. A key idea is to change variables to work with only
one measure.  

\begin{proof}[Proof of Theorem~\ref{nat:th-Levy}] We argue as in the preceding proof with $u=u^\alpha$ and $v=u^\beta$, i.e. $(u_0,f,\varphi)=(v_0,g,\psi)$. To simplify references to similar computations, we still use the letters $u$ and $v$ for a while.

\smallskip

\noindent {\bf 1.} {\it Applying Kuznetsov, initial estimates.}
As in step {\bf 1} in the proof of Theorem~\ref{th:nonlin}, we apply
Lemma \ref{lem:kuznetsov} and estimate the $\mathcal
L_r$-terms. We obtain estimates similar to \eqref{nat:cnl-1},
and \eqref{nat:end2}, and conclude that for
all $\alpha,\beta \in (0,2)$, $r,\epsilon>0$ and $T>\nu>0$, 
\begin{equation}\label{bilan-ag-fp}
\begin{split}
& \|u(\cdot,T)-v(\cdot,T)\|_{L^{1}}\\
& \leq  C(d) \, |u_0|_{BV} \, \epsilon+2\,(m_{u}(\nu) \vee m_{v}(\nu)) +C_\epsilon \,  (r^{2-\alpha}+r^{2-\beta})\\
& \quad +\underbrace{\int_{Q_{T}^2}\sgn(v-u)\,
  \left(\Levy^{\beta,r}[\varphi(v(\cdot,t))](x)-\Levy^{\alpha,r}[\varphi(u(\cdot,s))](y)\right)
  \phi^{\epsilon,\nu}\, \dif w}_{=:\E_3}.
\end{split}
\end{equation} 
The new $r^{2-\beta}$-term comes from the new $\mathcal
L_r^\beta$-term in the estimate corresponding to $\E_2$. Note that the
terms in $\E_3$ only involve one function $\varphi$, but different
 $\alp,\beta$. Most of the remaining proof consists in estimating $\E_3$.

\medskip

\noindent {\bf 2.} {\it Change of variables and first estimate of $\E_3$.} We perform several
changes of variables to move the differences between $\mathcal L^{\alp,r}$ and
$\mathcal L^{\beta,r}$ from the L\'evy measure to the
$z$-translations. This is similar in spirit to what we did in the
preceding proof to obtain \eqref{cut-change-nonlin}. First we let
$\tilde{z}=  |z|^{\gamma^{-1}-1} \, z$ ($\gamma>0$), and note
that~$\dif \tilde z=\gamma^{-1}  |z|^{d \, (\gamma^{-1}-1)} \, \dif
z$\footnote{Indeed, $\dif \tilde z=F(z) \, \dif z$ for
  $F(z)=|\mbox{det} \, (D\,(|z|^{\gamma^{-1}-1} \, z))|$ and
$$
 D\,(|z|^{\gamma^{-1}-1} \, z)  = (\gamma^{-1}-1) \, |z|^{\gamma^{-1}-3} \, z \otimes z+|z|^{\gamma^{-1}-1} \, \mbox{Id}.
$$
Hence $F$ is positive, $F(\lambda \, z)=|\lambda|^{d \,
  (\gamma^{-1}-1)} \, F(z)$ for all~$\lambda \in \R$, and radial since
$$
F(R \, e)=\left|\mbox{det} \, \left((\gamma^{-1}-1) \, R \, e \, (R\,e)^t+R \, R^t\right)\right|=\left|\mbox{det} \, \left(R \, ((\gamma^{-1}-1) \, e \, e^t+\mbox{Id}) \, R^t\right)\right|=\gamma^{-1},
$$
for all orthogonal matrices $R\in\R^{d \times d}$ and column vectors $e$ of the canonical basis.
}
so that
$|z|^{-d-\beta} \, \dif z=\gamma \,  
|\tilde{z}|^{-d-\beta \, \gamma} \, \dif \tilde{z}$. 
Take
$\gamma=\gamma_\beta:=\sqrt{\frac{\alpha}{\beta}}$, and check that $-d-\beta \, \gamma=-d-\sqrt{\alpha \,
  \beta}$ and 
\begin{equation*}
\Levy^{\beta,r}[\varphi(v(\cdot,t))](x) = G_d(\beta) \, \gamma_\beta \int_{|z|>r^{\gamma_\beta^{-1}}} \frac{\varphi \left(v (x+ |z|^{\gamma_\beta-1} \, z ,t)\right)-\varphi(v(x,t))}{|z|^{d+\sqrt{\alpha \, \beta}}} \, \dif z.
\end{equation*}
Then we use the change of variable~$z
\mt \left(G_d(\beta)
  \gamma_\beta\right)^{\frac{1}{\sqrt{\alpha  \beta}}}  z$ and
get that
\begin{equation*}
\Levy^{\beta,r}[\varphi(v(\cdot,t))](x)= \int_{|z|>r_\beta} \frac{\varphi \left(v (x+ 
c_\beta \, |z|^{\gamma_\beta-1} \, z ,t)\right)-\varphi(v(x,t))}{|z|^{d+\sqrt{\alpha \, \beta}}} \, \dif z, 
\end{equation*}
where~$c_\beta:=\left(G_d(\beta) \,
  \gamma_\beta\right)^\frac{1}{\beta}>0$
and~$r_\beta:=\left(G_d(\beta) \,
  \gamma_\beta\right)^{-\frac{1}{\sqrt{\alpha \, \beta}}}
r^{\gamma_\beta^{-1}}>0.$ Similar computations for~$u$ show that
\begin{equation*}
\Levy^{\alpha,r}[\varphi(u(\cdot,s))](y)= \int_{|z|>r_\alpha} \frac{\varphi (u \left(y+ 
c_\alpha \, |z|^{\gamma_\alpha-1} \, z ,s))-\varphi(u(y,s)\right)}{|z|^{d+\sqrt{\alpha \, \beta}}} \, \dif z,
\end{equation*}
where~$\gamma_\alpha :=\sqrt{\frac{\beta}{\alpha}}, \quad
c_\alpha :=\left(G_d(\alpha) \, \gamma_\alpha\right)^\frac{1}{\alpha}$
and~$r_\alpha :=\left(G_d(\alpha) \,
  \gamma_\alpha\right)^{-\frac{1}{\sqrt{\alpha \, \beta}}}
r^{\gamma_\alpha^{-1}}$. Hence 
\begin{equation*}
\begin{split}
\E_3  & =  
\int_{Q_{T}^2} \int_{r_\alpha \wedge r_\beta <|z| < r_\alpha \vee r_\beta} \dots \frac{\dif z}{|z|^{d+\sqrt{\alpha \, \beta}}}\\
& \quad +\int_{Q_{T}^2} \int_{|z|>r_\alpha \vee r_\beta} \sgn(v-u) \\ 
& \quad \quad \cdot\frac{\left\{\varphi \left(v (x+ 
c_\beta \, |z|^{\gamma_\beta-1} \, z ,t)\right)-\varphi \left(u (y+ 
c_\alpha \, |z|^{\gamma_\alpha-1} \, z ,s)\right)\right\}-\{\varphi(v)-\varphi(u)\}}{|z|^{d+\sqrt{\alpha \, \beta}}}\\
& \quad \quad \cdot\phi^{\epsilon,\nu} \, \dif z \, 
   \dif w\\
&=: \E_{3,1}+\E_{3,2},
\end{split}
\end{equation*}
where the integrand of $\E_{3,1}$ only contains either $u$-terms
or $v$-terms. As in the preceding
proof, cf. \eqref{cut-change-nonlin} and
\eqref{esti-reste-change-nonlin}, we find that
\begin{equation}\label{esti-reste-change-p}
\E_{3,1} \leq C_\epsilon {\co}_r(1),
\end{equation}
where ${\co}_r(1)=\max_{\sigma=\alpha,\beta} (r_\alpha \vee r_\beta)^{2 \, \gamma_\sigma-\sqrt{\alpha \, \beta}} \ra 0$ as $r \da 0$ and $\alpha,\beta$ are fixed.

Most of the remaining proof consists in estimating $\E_{3,2}$. Before continuing, let us list the following properties that will be needed: for any $d \in \mathbb{N}$ and $\lambda \in (0,2),$
\begin{equation}
\label{properties-change}
\begin{cases}
\lim_{\alpha,\beta \ra \lambda} \gamma_\alpha=\lim_{\alpha,\beta \ra \lambda} \gamma_\beta=1,\\
\lim_{\alpha,\beta \ra \lambda} c_\alpha=\lim_{\alpha,\beta \ra \lambda} c_\beta=G_d(\lambda)^{\frac{1}{\lambda}}>0,\\
\lim_{\alpha,\beta \ra \lambda} \frac{|\gamma_\alpha-\gamma_\beta|}{|\alpha-\beta|} = \frac{1}{\lambda},\\
\limsup_{\alpha,\beta \ra \lambda} \frac{|c_\alpha-c_\beta|}{|\alpha-\beta|}<+\infty.
\end{cases}
\end{equation} 
In particular, the limsup is a constant of the form $C=C(d,\lambda)$ (note also that this limsup is in fact a limit but this is will not be needed). These properties are immediate consequences of~\eqref{properties-Gdalpha}.

\medskip

\noindent {\bf 3.} {\it First estimate of $\E_{3,2}$.} We introduce
parameters $r_2 \geq r_1 >0$. 
Notice that
$r_1 > r_\alpha \vee r_\beta$ for sufficiently small $r$ ($r \da 0$ in the next step). Let us define
\begin{equation*}
\begin{split}
&\E_{3,2}  =  \sum_{i=1}^3 \E_{3,2,i} := \sum_{i=1}^3
\int_{Q_{T}^2} \int_{|z| \in I_i} \sgn(v-u)\\ 
&  \cdot\frac{\left\{\varphi \left(v (x+ 
c_\beta \, |z|^{\gamma_\beta-1} \, z ,t)\right)-\varphi \left(u (y+ 
c_\alpha \, |z|^{\gamma_\alpha-1} \, z ,s)\right)\right\}-\{\varphi(v)-\varphi(u)\}}{|z|^{d+\sqrt{\alpha \, \beta}}}\\ &  \cdot\phi^{\epsilon,\nu} \, \dif z \, 
   \dif w  
\end{split}
\end{equation*}
for $I_1=(r_2,+\infty)$,~$I_2=(r_1,r_2)$ and~$I_3=(r_\alpha \vee r_\beta, r_1)$. 
An application of Lemma~\ref{lem-kato} with~$c=\tilde c=c_\beta$ and
$\gamma=\tilde\gamma=\gamma_\beta$, shows that
\begin{equation}
\label{tech-nl-1-pp}
\begin{split}
\E_{3,2,i} & \leq \int_{Q_{T}^2} \int_{|z| \in I_i} \sgn(v-u) \, \phi^{\epsilon,\nu}  \\
& \quad \cdot\frac{\varphi \left(u (y+ 
c_\beta \, |z|^{\gamma_\beta-1} \, z ,s)\right)-\varphi \left(u (y+ 
c_\alpha \, |z|^{\gamma_\alpha-1} \, z ,s)\right)}{|z|^{d+\sqrt{\alpha \, \beta}}}  \, \dif z \, \dif w.
\end{split}
\end{equation}
We now estimate these terms. 

Let us begin with $\E_{3,2,1}$. Going back to the original variables, $c_\alpha \,
|z|^{\gamma_\alpha-1} z \mt z$,
\begin{align*}
& \int_{Q_{T}^2} \int_{|z| >r_2} \sgn(v-u) \frac{\varphi \left(u (y+ 
c_\alpha \, |z|^{\gamma_\alpha-1} \, z ,s)\right)}{|z|^{d+\sqrt{\alpha \, \beta}}} \, \phi^{\epsilon,\nu}\, \dif z \, \dif w\\
& =G_d(\alpha) \int_{Q_{T}^2} \int_{|z| >c_\alpha \, r_2^{\gamma_\alpha}} \sgn(v-u) \frac{\varphi \left(u (y+ z ,s)\right)}{|z|^{d+\alpha}} \, \phi^{\epsilon,\nu} \, \dif z \, \dif w.
\end{align*}
Let us continue by assuming that $c_\alpha \, r_2^{\gamma_\alpha} \geq c_\beta \, r_2^{\gamma_\beta}$. 
By the above identity and a similar one for the
$\beta$-term, we then find that
\begin{equation*}
\begin{split}
& \E_{3,2,1}  \\
& \leq \int_{Q_{T}^2} \int_{|z| >c_\alpha \, r_2^{\gamma_\alpha}} \sgn(v-u) \, \varphi(u(y+ z ,s)) \, \phi^{\epsilon,\nu}  
\left(\frac{G_d(\beta)}{|z|^{d+\beta}}- \frac{G_d(\alpha)}{|z|^{d+\alpha}}\right) \dif z \, \dif w  \\
& \quad +G_d(\beta) \int_{Q_{T}^2} \int_{c_\beta \, r_2^{\gamma_\beta} < |z| < c_\alpha \, r_2^{\gamma_\alpha}}  \sgn(v-u) \, \varphi(u(y+ z ,s)) \, \phi^{\epsilon,\nu} \, \frac{\dif z\, \dif w}{|z|^{d+\beta}}.
\end{split}
\end{equation*}
By \eqref{Aflux} and \eqref{nat:nonincrease}, $
\|\varphi(u)\|_{L^1(Q_T)} \leq M \, \|u_0\|_{L^1}
$ for $M=T \, \esssup_{I(u_0)} |\varphi'|$, and then by Fubini,
\begin{equation*}
\begin{split}
& \E_{3,2,1} \\
& \leq  M \, \|u_0\|_{L^1} \left\{ \int_{|z| >c_\alpha \, r_2^{\gamma_\alpha}} \left|\frac{G_d(\beta)}{|z|^{d+\beta}}- \frac{G_d(\alpha)}{|z|^{d+\alpha}}\right| \dif z+ G_d(\beta) \int_{c_\beta \, r_2^{\gamma_\beta} < |z| < c_\alpha \, r_2^{\gamma_\alpha}}  \frac{\dif z}{|z|^{d+\beta}}
 \right\}.
\end{split}
\end{equation*}
Doing the same reasoning when $c_\alpha \, r_2^{\gamma_\alpha} < c_\beta \, r_2^{\gamma_\beta}$ and taking the maximum, we finally get
\begin{equation}
\label{vlc-tech1}
\begin{split}
\E_{3,2,1} & \leq  M \, \|u_0\|_{L^1} \int_{|z| >(c_\alpha \, r_2^{\gamma_\alpha}) \wedge (c_\beta \, r_2^{\gamma_\beta})} \left|\frac{G_d(\beta)}{|z|^{d+\beta}}- \frac{G_d(\alpha)}{|z|^{d+\alpha}}\right| \dif z\\
& \quad + C(d) \, M \, \|u_0\|_{L^1} \max_{\sigma=\alpha,\beta} \int_{|z| \in {\scriptstyle \text{co}} \{c_\alpha \, r_2^{\gamma_\alpha},c_\beta \, r_2^{\gamma_\beta}\}}  \frac{\dif z}{|z|^{d+\sigma}},
\end{split}
\end{equation}
where $C(d)=\max_{[0,2]} G_d$ is finite by \eqref{properties-Gdalpha} and from now on $\mbox{co}  \{a,b\}$ designs the interval $(a \wedge b ,a \vee b)$.

Next, by~\eqref{Aflux} 
and~\eqref{nat:nonincrease},~$|\varphi(u)|_{L^1(0,T;BV)} \leq M \,
|u_0|_{BV}$. Hence by integrating first with respect to $y$ in \eqref{tech-nl-1-pp}, we find that
\begin{align}\label{tech-nl-1-p}
\E_{3,2,2} \leq M \, |u_0|_{BV} \int_{r_1<|z|<r_2} \left|c_\alpha \, |z|^{\gamma_\alpha}-c_\beta \, |z|^{\gamma_\beta} \right| \frac{\dif z}{|z|^{d+\sqrt{\alpha \, \beta}}}.
\end{align}

Finally, by Lemma~\ref{lem-kato} 
\begin{equation*}
\begin{split}
\E_{3,2,3} & \leq \int_{Q_{T}^2}  \int_{r_\alpha \vee r_\beta <|z|<r_1} |\varphi(v)-\varphi(u)| \, \theta_\nu(t-s)  \\
 & \quad \cdot\left\{ \rho_\epsilon (x-y+h(z))-\rho_\epsilon(x-y) \right\} \frac{\dif z\,\dif w}{|z|^{d+\sqrt{\alpha \, \beta}}} ,
\end{split}
\end{equation*}
with $h(z):=(c_\beta \, |z|^{\gamma_\beta-1}-c_\alpha \,
|z|^{\gamma_\alpha-1}) \, z$. This estimate is similar to
\eqref{repa-p}, but with a new displacement, new functions
$\varphi(u)$ and $\varphi(v)$, and the new power $\sqrt{\alpha \,
  \beta}$. By arguing as before, we find that
\begin{equation*}
\begin{split}
& \E_{3,2,3} \leq  \frac{C(d)}{\epsilon} \int_0^{T} 
 \int_{|z|<r_1} \int_0^1 (1-\tau) \,   |z|^{-d-\sqrt{\alpha \, \beta}}  \,   |h(z)|^2 \, |\varphi(u(\cdot,s))|_{BV} \, \dif \tau \, \dif z \, \dif s,  
\end{split}
\end{equation*}
instead of~\eqref{rep-p-b}. Since~$|\varphi(u)|_{L^1(0,T;BV)} \leq M \, |u_0|_{BV}$, we get that
\begin{equation}\label{vlc-tech2}
\begin{split}
\E_{3,2,3} \leq C(d) \, M \, |u_0|_{BV} \, \frac{1}{\epsilon} \int_{|z|<r_1} \left|
 c_\beta \, |z|^{\gamma_\beta}-c_\alpha \,
|z|^{\gamma_\alpha}\right|^2 \frac{\dif z}{|z|^{d+\sqrt{\alp \, \beta}}}.
\end{split}
\end{equation}

\medskip

\noindent {\bf 4.} {\it The general estimate.} 
Let us resume the preceding estimates. By \eqref{bilan-ag-fp}, \eqref{esti-reste-change-p}, \eqref{vlc-tech1}, \eqref{tech-nl-1-p}, \eqref{vlc-tech2} and the fact that $\E_{3}=\E_{3,1}+\E_{3,2,1}+\E_{3,2,2}+\E_{3,2,3}$, we have proved that for all
$\alpha,\beta \in (0,2)$, $\epsilon>0$, $T>\nu>0$, $r_2 \geq r_1>0$ and $r >0$ small enough,  
\begin{equation*}
\begin{split}
& \|u^\alpha(\cdot,T)-u^\beta(\cdot,T)\|_{L^1} \\
& \leq 2 \, \left(m_u (\nu) \vee m_v(\nu) \right)+C_\epsilon \, (r^{2-\alpha}+r^{2-\beta}+{\co}_r(1))\\
& \quad +C(d) \, |u_0|_{BV} \, \epsilon\\
& \quad + M \, \|u_0\|_{L^1} \int_{|z| >(c_\alpha \, r_2^{\gamma_\alpha}) \wedge (c_\beta \, r_2^{\gamma_\beta})} \left|\frac{G_d(\beta)}{|z|^{d+\beta}}- \frac{G_d(\alpha)}{|z|^{d+\alpha}}\right| \dif z\\
& \quad +C(d) \, M \, \|u_0\|_{L^1} \max_{\sigma=\alpha,\beta} \int_{|z| \in {\scriptstyle \text{co}} \{c_\alpha \, r_2^{\gamma_\alpha},c_\beta \, r_2^{\gamma_\beta}\}}  \frac{\dif z}{|z|^{d+\sigma}}  \\
 & \quad +M \, |u_0|_{BV} \int_{r_1<|z|<r_2} \left|c_\alpha \, |z|^{\gamma_\alpha}-c_\beta \, |z|^{\gamma_\beta} \right| \frac{\dif z}{|z|^{d+\sqrt{\alpha \, \beta}}}\\
 & \quad +C(d) \, M \, |u_0|_{BV} \, \frac{1}{\epsilon} \int_{|z|<r_1} \left|
 c_\beta \, |z|^{\gamma_\beta}-c_\alpha \,
|z|^{\gamma_\alpha}\right|^2 \frac{\dif z}{|z|^{d+\sqrt{\alp \, \beta}}}.
\end{split}
\end{equation*}
Now, we pass to the limit as $r,\nu \da 0$, thanks to \eqref{esti-reste-change-p}. 
Next, we replace the $L^1$-norm at time $T$ by  the $C([0,T];L^1)$-norm, which can be done without loss of generality since $t \, \|\varphi'\|_\infty \leq T \,  \|\varphi'\|_\infty=M$, for all $t \leq T$. Finally, we replace $\epsilon$ by $\epsilon \, |\alpha-\beta|$, which can also be done since $\epsilon$ is arbitrary. We deduce that 
for all $\alpha,\beta \in (0,2)$, $\epsilon >0$, and $r_2 \geq r_1>0,$
\begin{equation}\label{vlc-general}
\begin{split}
& \|u^\alpha-u^\beta\|_{C([0,T];L^1)} \\
& \leq C(d) \, |u_0|_{BV} \, \epsilon \, |\alpha-\beta|\\
& \quad + M \, \|u_0\|_{L^1} \underbrace{\int_{|z| >(c_\alpha \, r_2^{\gamma_\alpha}) \wedge (c_\beta \, r_2^{\gamma_\beta})} \left|\frac{G_d(\beta)}{|z|^{d+\beta}}- \frac{G_d(\alpha)}{|z|^{d+\alpha}}\right| \dif z}_{=:J_1}\\
& \quad +C(d) \, M \, \|u_0\|_{L^1} \underbrace{\max_{\sigma=\alpha,\beta} \int_{|z| \in {\scriptstyle \text{co}}  \{c_\alpha \, r_2^{\gamma_\alpha},c_\beta \, r_2^{\gamma_\beta}\}}  \frac{\dif z}{|z|^{d+\sigma}}}_{=:J_2}  \\
 & \quad +M \, |u_0|_{BV} \underbrace{\int_{r_1<|z|<r_2} \left|c_\alpha \, |z|^{\gamma_\alpha}-c_\beta \, |z|^{\gamma_\beta} \right| \frac{\dif z}{|z|^{d+\sqrt{\alpha \, \beta}}}}_{=:J_3}\\
 & \quad +\frac{C(d) \, M \, |u_0|_{BV} }{\epsilon} \underbrace{\frac{1}{|\alpha-\beta|} \int_{|z|<r_1} \left|
 c_\beta \, |z|^{\gamma_\beta}-c_\alpha \,
|z|^{\gamma_\alpha}\right|^2 \frac{\dif z}{|z|^{d+\sqrt{\alp \, \beta}}}}_{=:J_4}.
\end{split}
\end{equation}
The rest of proof consists in estimating $\limsup_{\alpha,\beta \ra \lambda} \frac{J_i}{|\alpha-\beta|}$ ($i=1,\dots,4$). 
We will 
use the letter $C$ to denote various constants $C=C(d,\lambda)$. 

\medskip

\noindent {\bf 5.} {\it The case $\lambda \in (1,2)$.} We first let $r_2 \ra +\infty$ so that $(c_\alpha \, r_2^{\gamma_\alpha}) \wedge (c_\beta \, r_2^{\gamma_\beta}) \ra +\infty$, since all these coefficients are positive (cf. step {\bf 2}). We get at the limit
\begin{equation}\label{vlc-tech-10}
J_1=J_2=0 
\end{equation}
and
$
J_3=\int_{|z|>r_1} \left|c_\alpha \, |z|^{-d-\sigma_\alpha}-c_\beta \, |z|^{-d-\sigma_\beta} \right| \dif z,
$
with $\sigma_\alpha:=\sqrt{\alpha \, \beta}-\gamma_\alpha$ and $\sigma_\beta:=\sqrt{\alpha \, \beta}-\gamma_\beta$. 

Let us estimate $J_3$. We recognize a term of the same form than in \eqref{lc-ref-start} with the new ``locally Lipschitz'' coefficients $c_\alpha,c_\beta$ and powers $\sigma_\alpha,\sigma_\beta$. 
Arguing as before,
\begin{equation*}
\begin{split}
J_3 \leq |c_\alpha-c_\beta| \, \max_{\sigma =\sigma_\alpha,\sigma_\beta} \int_{|z|>r_1} \frac{\dif z}{|z|^{d+\sigma}}+(c_\alpha \vee c_\beta) \underbrace{\int_{|z|>r_1} \left||z|^{-d-\sigma_\alpha}-|z|^{-d-\sigma_\beta} \right| \dif z}_{=:\tilde{J}_3},
\end{split}
\end{equation*}
where $
\tilde{J}_{3} \leq S_d  \left|\frac{r_1^{-\sigma_\alpha}}{\sigma_\alpha}-\frac{r_1^{-\sigma_\beta}}{\sigma_\beta} \right|+2 \, S_d \left|\frac{1}{\sigma_\alpha}-\frac{1}{\sigma_\beta}\right| \mathbf{1}_{r_1 <1}.
$
By \eqref{properties-change}, 
\begin{equation*}
\begin{split}
\limsup_{\alpha,\beta \ra \lambda} \frac{J_3}{|\alpha-\beta|} 
\leq C \,\underbrace{ (r_1^{1-\lambda}+\mathbf{1}_{r_1<1})}_{\leq C \, r_1^{1-\lambda} \mbox{ if $\lambda>1$}}+C \, \underbrace{\limsup_{\alpha,\beta \ra \lambda} \frac{1}{|\alpha-\beta|} \left|\frac{r_1^{-\sigma_\alpha}}{\sigma_\alpha}-\frac{r_1^{-\sigma_\beta}}{\sigma_\beta}  \right|}_{=:\tilde{\tilde{J}}_{3}},
\end{split}
\end{equation*}
where a Taylor expansion with integral remainder shows that 
$$
\tilde{\tilde{J}}_{3}=\limsup_{\alpha,\beta \ra \lambda} \frac{|\sigma_\alpha-\sigma_\beta|}{|\alpha-\beta|} \, \Big|\int_0^1 \frac{\sigma_\tau \, r_1^{-\sigma_\tau} \, \ln r_1+r_1^{-\sigma_\tau}}{\sigma_\tau^2} \, \dif \tau \Big| \leq C \, r_1^{1-\lambda} \, (1+|\ln r_1|),
$$
with $\sigma_\tau:=\tau \, \sigma_\alpha+(1-\tau) \, \sigma_\beta$. We deduce the following estimate:
\begin{equation}\label{tech-p-10-b}
\limsup_{\alpha,\beta \ra \lambda} \frac{J_{3}}{|\alpha-\beta|} \leq C \, r_1^{1-\lambda}
\, (1 + |\ln r_1|).
\end{equation}  
Let us notice that this estimate fails when $\lambda = 1$, because $\sigma_\alpha,\sigma_\beta \ra \lambda-1=0$ as $\alpha,\beta \ra 1$.

Let us now estimate $J_4$. By adding and subtracting terms,
\begin{equation*}
\begin{split}
J_4 & \leq \frac{1}{2} \sum_\pm \frac{|c_\alpha \mp c_\beta|^2}{|\alpha-\beta|} \underbrace{\int_{|z|<r_1} \left||z|^{\gamma_\alpha} \pm |z|^{\gamma_\beta}\right|^2 \frac{\dif z}{|z|^{d+\sqrt{\alpha \, \beta}}}}_{=:J_{4,\pm}}.
\end{split}
\end{equation*}
By expanding the squares and integrating,
$$
J_{4,\pm}=S_d \left(\frac{r_1^{2 \, \gamma_\alpha-\sqrt{\alpha \, \beta}}}{2 \, \gamma_\alpha-\sqrt{\alpha \, \beta}} +
\frac{r_1^{2 \, \gamma_\beta-\sqrt{\alpha \, \beta}}}{2 \, \gamma_\beta-\sqrt{\alpha \, \beta}}  \pm 2 \, \frac{r_1^{\gamma_\alpha+\gamma_\beta-\sqrt{\alpha \, \beta}}}{\gamma_\alpha+\gamma_\beta-\sqrt{\alpha \, \beta}}\right).
$$
By \eqref{properties-change}, the limit of $J_{4,+}$ is easy to compute and we get
$$
\limsup_{\alpha,\beta \ra \lambda} \frac{J_{4}}{|\alpha-\beta|} \leq C \, r_1^{2-\lambda}+C \, \underbrace{\limsup_{\alpha,\beta \ra \lambda} \frac{J_{4,-}}{(\alpha-\beta)^{2}}}_{=:\tilde{J}_{4,-}}. 
$$
We estimate $\tilde{J}_{4,-}$ by multiplying and dividing by $(\gamma_\alpha-\gamma_\beta)^2$ and changing the variables by 
$a:=\gamma_\alpha-\frac{\sqrt{\alpha \, \beta}}{2}$ and $b:=\gamma_\beta-\frac{\sqrt{\alpha \, \beta}}{2}$. We get
\begin{equation*}
\begin{split}
\tilde{J}_{4,-} & \leq \limsup_{\alpha,\beta \ra \lambda} \frac{(\gamma_{\alpha}-\gamma_\beta)^{2}}{|\alpha-\beta|^{2}}\\
& \quad \cdot  \limsup_{a,b \ra c} \frac{1}{|a-b|^2} \left(\frac{r_1^{2 \, a}}{2 \, a}+ \frac{r_1^{2 \, b}}{2 \, b}-\frac{2 \, r_1^{a+b}}{a+b}\right),
\end{split}
\end{equation*}
where $c:=1-\frac{\lambda}{2}>0$ is the limit of $a,b$ as $\alpha,\beta \ra \lambda$.
By \eqref{properties-change} and the estimation of the last limit in Lemma~\ref{lem-taylor}\eqref{t5} in appendix, 
$$
\tilde{J}_{4,-} \leq C \, r_1^{2 -\lambda} \, (1+\ln^2 r_1).
$$
We conclude that
\begin{equation}\label{tech-21}
\limsup_{\alpha,\beta \ra \lambda} \frac{J_{4}}{|\alpha-\beta|} \leq C \, r_1^{2-\lambda} \, (1 + \ln^2 r_1).
\end{equation} 
Note that this estimate works even if $\lambda=1$.

We are now ready to conclude the proof and show
\eqref{main-esti-second} when $\lambda \in (1,2)$. Recall that
we estimate $\Lip_\alpha (u;\lambda)$ using \eqref{vlc-general}
with $r_2=+\infty$. The limsups of the terms on the right-hand side are
estimated by \eqref{vlc-tech-10}, \eqref{tech-p-10-b} and
\eqref{tech-21}. We get for all $\epsilon >0$ and 
$r_1 > 0$, 
\begin{equation*}
\begin{split}
\Lip_\alpha (u;\lambda) \leq C \, |u_0|_{BV} \left\{\epsilon+M \left(r_1^{1-\lambda}
\, (1 + |\ln r_1|) +\frac{r_1^{2-\lambda}}{\epsilon} \, (1 + \ln^2 r_1) \right)\right\}.
\end{split}
\end{equation*}
We complete the proof by taking $\epsilon= M^\frac{1}{\lambda} \, (1 + |\ln M|)$ and $r_1=M^\frac{1}{\lambda}$.

\medskip

\noindent {\bf 6.} {\it The case $\lambda=1$.} We have to estimate
again $J_i$ in \eqref{vlc-general} ($i=1,\dots,4$). This time, we do
not let $r_2 \ra +\infty$.  

For $J_1$, we recognize again a term of the form \eqref{lc-ref-start}
and we argue in the same way to estimate it. The only difference is
that the fixed cutting parameter $\tilde{r}$ is replaced by a moving
one $(c_\alpha \, r_2^{\gamma_\alpha}) \wedge (c_\beta \,
r_2^{\gamma_\beta})$. But, by
\eqref{properties-change} it follows that
$\lim_{\alpha,\beta \ra 1} (c_\alpha \, r_2^{\gamma_\alpha}) \wedge
(c_\beta \, r_2^{\gamma_\beta})=G_d(1) \, r_2$ with $G_d(1)>0$, and we leave it to the reader to verify
that this is sufficient to extend the proof of \eqref{tech-p-10} to
the current case. Now,
this estimate becomes   
\begin{equation}
\label{lc-J1}
\limsup_{\alpha,\beta \ra 1} \frac{J_1}{|\alpha-\beta|} \leq C \, (G_d(1) \, r_2)^{-1} \left(1 + |\ln (G_d(1) \, r_2)| \right) \leq  C \, r_2^{-1} \, (1 + |\ln r_2|).
\end{equation}

For $J_2$, we use that
\begin{equation*}
\begin{split}
J_2 & = S_d \, \max_{\sigma=\alpha,\beta} \frac{1}{\sigma} \left| (c_\alpha \, r_2^{\gamma_\alpha})^{-\sigma} - (c_\beta \, r_2^{\gamma_\beta})^{-\sigma} \right|\\
& =  S_d \, \max_{\sigma=\alpha,\beta}  |c_\alpha \, r_2^{\gamma_\alpha}-c_\beta \, r_2^{\gamma_\beta}| 
\int_0^1 \left(\tau \, c_\alpha \, r_2^{\gamma_\alpha}+(1-\tau) \, c_\beta \, r_2^{\gamma_\beta} \right)^{-\sigma-1} \, \dif \tau. 
\end{split}
\end{equation*}
By \eqref{properties-change} and a simple passage to the limit under the integral sign,
$$
\limsup_{\alpha,\beta \ra 1} \frac{J_2}{|\alpha-\beta|} \leq C \, r_2^{-2}
\limsup_{\alpha,\beta \ra 1} \frac{|c_\alpha \, r_2^{\gamma_\alpha}-c_\beta \, r_2^{\gamma_\beta}| }{|\alpha-\beta|}.
$$
To estimate the last limit, we write 
$$
|c_\alpha \, r_2^{\gamma_\alpha}-c_\beta \, r_2^{\gamma_\beta}| \leq 
|c_\alpha -c_\beta | \, (r_2^{\gamma_\alpha} \vee r_2^{\gamma_\beta})+(c_\alpha \vee c_\beta ) \,|r_2^{\gamma_\alpha}-r_2^{\gamma_\beta}|,
$$
where $|r_2^{\gamma_\alpha}-r_2^{\gamma_\beta}|=|\gamma_\alpha-\gamma_\beta| \, |\ln r_2|
\int_{0}^1 r_2^{\tau \, \gamma_\alpha+(1-\tau) \, \gamma_\beta}  \, \dif \tau$. Hence, again by \eqref{properties-change},
\begin{equation}\label{lc-J2}
\limsup_{\alpha,\beta \ra 1} \frac{J_2}{|\alpha-\beta|} \leq C \, r_2^{-1} \, (1 + |\ln r_2|). 
\end{equation}

We have to do again the estimate of $J_3$, since the preceding one \eqref{tech-p-10-b} fails. 
\begin{equation*}
\begin{split}
J_3 & \leq |c_\alpha-c_\beta| \max_{\sigma =\alpha,\beta} \int_{r_1<|z|<r_2} |z|^{\gamma_\sigma} \,  \frac{dz}{|z|^{d+\sqrt{\alpha \, \beta}}}\\
& \quad +(c_\alpha \vee c_\beta) \underbrace{\int_{r_1<|z|<r_2} \left| |z|^{\gamma_\alpha}-|z|^{\gamma_\beta} \right| \frac{dz}{|z|^{d+\sqrt{\alpha \, \beta}}}}_{=:\tilde{J}_{3}},
\end{split}
\end{equation*}
so that by \eqref{properties-change} and a simple passage to the limit under the integral sign,
\begin{equation}
\label{tech-nl-1-p-b}
\limsup_{\alpha,\beta \ra 1} \frac{J_{3}}{|\alpha-\beta|}  \leq C \, (|\ln r_1|+|\ln r_2|)+C \, \limsup_{\alpha,\beta \ra 1} \frac{\tilde{J}_3}{|\alpha-\beta|}. 
\end{equation}
To estimate $\tilde{J}_{3}$, we first assume 
that $\alpha,\beta \neq 1$, so that $\gamma_\alpha-\sqrt{\alpha \,
\beta}=(1-\alpha) \, \gamma_\alpha \neq 0$ and $\gamma_\beta-\sqrt{\alpha \, \beta}\neq 0$.  Hence, 
$\tilde{J}_{3}=S_d \, (\int_{r_1}^1 \dots+\int_{1}^{r_2} \dots )$ in polar coordinates, and
\begin{equation*}
\begin{split}
\tilde{J}_{3} & \leq S_d \sum_{i=1,2} \left|\frac{r_i^{\gamma_\alpha -\sqrt{\alpha \, \beta}}-1}{\gamma_\alp -\sqrt{\alpha \, \beta}}
-\frac{r_i^{\gamma_\beta -\sqrt{\alpha \, \beta}}-1}{\gamma_\beta -\sqrt{\alpha \, \beta}}\right|.
\end{split}
\end{equation*}
By Lemma~\ref{lem-taylor}\eqref{t3} in the appendix, 
\begin{equation*}
\begin{split}
\tilde{J}_{3} & \leq 2 \, S_d \, |\gamma_\alpha-\gamma_\beta| \, \max_{i=1,2} \max_{\sigma=\alpha,\beta} (1 \vee r_i^{\gamma_\sigma -\sqrt{\alpha \, \beta}}) \,  \ln^2 r_i.
\end{split}
\end{equation*}
By sending $\alp$ or $\beta\ra1$, we see that this inequality holds
also when $\alpha$ or $\beta=1$.
 Hence, by \eqref{properties-change} and \eqref{tech-nl-1-p-b}, 
\begin{equation}\label{lc-J3}
\limsup_{\alpha,\beta \ra 1} \frac{J_{3}}{|\alpha-\beta|} \leq
C \, (|\ln r_1| \vee \ln^2 r_1 +|\ln r_2| \vee \ln^2 r_2).
\end{equation}

Finally, for $J_4$, we use \eqref{tech-21} which is still valid and we are ready to show \eqref{main-esti-second} in the critical case. 
By \eqref{vlc-general}, \eqref{lc-J1}, \eqref{lc-J2}, \eqref{lc-J3} and \eqref{tech-21}, we have for all $\epsilon>0$, and
$r_2 \geq r_1 >0$,
\begin{equation*}
\begin{split}
\Lip_\alpha(u;1)
& \leq C \, |u_0|_{BV} \, \epsilon \\
& \quad +C \, M \, \|u_0\|_{L^1} \,  r_2^{-1} \, (1 + |\ln r_2|) \\
& \quad+C \, M \, |u_0|_{BV} (|\ln r_1| \vee \ln^2 r_1 +|\ln r_2|\vee \ln^2 r_2)\\
& \quad +C \, M \, |u_0|_{BV}  \, \frac{r_1}{\epsilon} \, (1 + \ln^2 r_1).
\end{split}
\end{equation*}
We complete the proof by taking $\epsilon= M \, (1 + |\ln M|)$, $r_1=M \wedge 1$, $r_2=1 \vee \frac{\|u_0\|_{L^1}}{|u_0|_{BV}}$, and noting that
$\|u_0\|_{L^1} \leq |u_0|_{BV}$ if $r_2=1$. 
\end{proof}

\section{Proof of Theorem \ref{thm:limit}}
\label{sec:limit}

This section is devoted to the proof of Theorem \ref{thm:limit}. Let
us first recall the notions of entropy solutions of \eqref{nat:scl}
and \eqref{nat:local-eq} introduced in~\cite{Kru70,Car99}. For
\eqref{nat:local-eq}, we use an equivalent definition introduced in
\cite{KaRi03}.
\begin{definition}[Entropy solutions]
\label{edef}
Let $u_0 \in L^\infty \cap L^1(\R^d)$ and  \eqref{flux}--\eqref{Aflux} hold.
Let $u\in L^\infty(Q_T) \cap L^\infty(0,T;L^1)$.
\begin{enumerate}[\rm (1)]
\item \label{entropy-scl} $u$ is an entropy solution of \eqref{nat:scl} if, for
all $k\in\R$ and all nonnegative $\phi\in C^\infty_c(\R^{d} \times [0,T))$, 
\begin{equation*}
\begin{split}
& \int_{Q_{T}} \Big(|u-k|\,\partial_t\phi+q_{f}(u,k) \cdot \nabla \phi
-\sgn (u-k) \, \varphi(u)\,\phi \Big)\, \dif x \, \dif t \\
&+\int_{\R^d}|u_0(x)-k|\,\phi(x,0) \, \dif x\geq 0.
\end{split}
\end{equation*}
\item 
$u$ is an entropy solution of \eqref{nat:local-eq} if,
\begin{enumerate}[\rm (a)]
\item \label{H1regularity} $\varphi(u)\in L^2(0,T;H^1)$,
\item 
\label{entropy-ineq-local-eq}
and for all $k\in\R$ 
and all nonnegative $\phi\in C^\infty_c(\R^{d} \times [0,T))$,
\begin{equation*}
\begin{split}
& \int_{Q_{T}} \Big(|u-k|\,\partial_t\phi+q_{f}(u,k) \cdot \nabla \phi
+|\varphi(u)-\varphi(k)|\, \Delta \phi \Big)\, \dif x \, \dif t \\
&+\int_{\R^d}|u_0(x)-k|\,\phi(x,0) \, \dif x\geq 0.
\end{split}
\end{equation*}
\end{enumerate}
\end{enumerate}
\end{definition}

To prove Theorem \ref{thm:limit}, we need to establish some technical lemmas. Let us begin by a compactness result. 

\begin{lemma}\label{Clem1}
Let $u_0 \in L^\infty \cap L^1(\R^d)$, \eqref{flux}--\eqref{Aflux} hold, and for each $\alpha \in (0,2)$, let $u^\alpha$ be the unique entropy solution to \eqref{1}. Then,
there exist $u,w \in L^\infty(Q_T)\cap C([0,T];L^1)$ such
that~$u=\lim_{\alpha \da 0} u^\alpha$ and~$w=\lim_{\alpha \ua 2}
u^\alp$, up to subsequences, in~$C([0,T];L^1_{\rm loc})$ and almost everywhere in $Q_T$.
\end{lemma}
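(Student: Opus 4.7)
The plan is to combine the uniform a priori bounds from Theorem~\ref{tm:WP} with a uniform-in-$\alp$ modulus of continuity in time, and then conclude by a Kolmogorov--Riesz / Arzel\`a--Ascoli argument.

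First, by \eqref{nat:nonincrease}, the family $\{u^\alp\}_{\alp\in(0,2)}$ satisfies, independently of~$\alp$,
\[
\|u^\alp\|_{L^\infty(Q_T)}\le\|u_0\|_{L^\infty},\quad \|u^\alp\|_{C([0,T];L^1)}\le\|u_0\|_{L^1},\quad |u^\alp|_{L^\infty(0,T;BV)}\le|u_0|_{BV},
\]
and in particular the spatial equicontinuity $\|u^\alp(\cdot+h,t)-u^\alp(\cdot,t)\|_{L^1}\le|u_0|_{BV}\,|h|$ holds uniformly in $\alp$, $t$, and $h$.

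Second, I would appeal to Corollary~\ref{th:time}, whose proof for $\alp\in(0,2)$ reduces by a time-rescaling to Theorem~\ref{th:nonlin} (so no circularity with the theorem we are currently proving). For $\alp\in(0,\tfrac12]$ the bound $\|u_0\|_{L^1}^{1-\alp}|u_0|_{BV}^\alp\|\varphi'\|_\infty|t-s|$, together with the control of $C(d,\alp)$ from Remark~\ref{rem-pal1}, gives a common Lipschitz-in-time modulus. For $\alp\in[\tfrac32,2)$ the bound $|u_0|_{BV}\|\varphi'\|_\infty^{1/\alp}|t^{1/\alp}-s^{1/\alp}|$ is dominated on $[0,T]$ by $C\,|t-s|^{1/2}$ (since $1/\alp\ge1/2$), with $C$ controlled by Remark~\ref{rem-l2}. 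Hence there exists a modulus $\omega$, independent of~$\alp$, on a one-sided neighborhood of each endpoint.

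Third, fix $R>0$. The uniform $L^1$ bound and spatial equicontinuity give relative compactness of $\{u^\alp(\cdot,t)\}_\alp$ in $L^1(B_R)$ for each $t$ (by Kolmogorov--Riesz, equivalently by Helly's selection applied time-slice by time-slice to the $BV$ bound). Combined with the uniform time modulus $\omega$, Arzel\`a--Ascoli yields relative compactness of $\{u^\alp\}_\alp$ in $C([0,T];L^1(B_R))$. A diagonal extraction over $R=1,2,\dots$ produces sequences $\alp_n\da0$ and $\beta_n\ua2$ and limits $u,w$ with $u^{\alp_n}\to u$ and $u^{\beta_n}\to w$ in $C([0,T];L^1_{\mathrm{loc}})$; a further subsequence yields a.e.\ convergence in $Q_T$. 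Fatou's lemma together with monotone convergence as $R\ra+\infty$ transports the $L^\infty$ bound and the time modulus to the limits, so $u,w\in L^\infty(Q_T)\cap C([0,T];L^1)$. The main obstacle is precisely the uniformity in~$\alp$ of the time modulus near the endpoints, which is settled by the robustness of the constants in Corollary~\ref{th:time} recorded in Remarks~\ref{rem-pal1} and~\ref{rem-l2}; once that is in hand, the rest is a standard compactness argument.
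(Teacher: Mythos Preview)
Your argument has a genuine gap: the hypotheses of Lemma~\ref{Clem1} only assume $u_0\in L^\infty\cap L^1(\R^d)$, not $u_0\in BV(\R^d)$. The bound $|u^\alp|_{L^\infty(0,T;BV)}\le|u_0|_{BV}$ from \eqref{nat:nonincrease} is therefore vacuous in general, and so is your spatial equicontinuity estimate $\|u^\alp(\cdot+h,t)-u^\alp(\cdot,t)\|_{L^1}\le|u_0|_{BV}\,|h|$. Likewise, Corollary~\ref{th:time} is stated under assumption \eqref{u0}, which includes $u_0\in BV$; the moduli you quote all carry a factor $|u_0|_{BV}$ (or $\|u_0\|_{L^1}^{1-\alp}|u_0|_{BV}^\alp$, or $\Ent_1(u_0)$) and give no information when $|u_0|_{BV}=+\infty$. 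So as written, neither your spatial compactness step nor your time-equicontinuity step goes through.

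The paper's proof supplies exactly the missing ingredient: approximate $u_0$ in $L^1$ by a sequence $u_0^n\in L^\infty\cap L^1\cap BV$, and run your compactness argument (Corollary~\ref{th:time} plus Helly plus Arzel\`a--Ascoli) on the approximating families $E_n=\{u^{\alp_m}_n\}_m$, for which it is valid. Then the $L^1$-contraction principle \eqref{nat:L1-contraction} gives $\sup_m\|u^{\alp_m}-u^{\alp_m}_n\|_{C([0,T];L^1)}\le\|u_0-u_0^n\|_{L^1}\to0$, which transfers both the relative compactness in $C([0,T];L^1_{\rm loc})$ and the equicontinuity in time (in the global $L^1$-norm) from $E_n$ to $E=\{u^{\alp_m}\}_m$. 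The last point is what yields $u,w\in C([0,T];L^1)$ at the end. Apart from this approximation layer, your proof sketch matches the paper's strategy.
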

 
\begin{proof}
We only do the proof for $w$, the proof for $u$ being similar.
Let us consider a sequence $\alp_m \ua 2$ and let us define
$E:=\left\{u^{\alpha_m}\right\}_{m}$. We will show that $E$ is
relatively compact in $C([0,T];L^1_{\rm loc})$.  
First we take 
a sequence 
$\{u^n_0\}_{n} \subset L^\infty \cap L^1 \cap
BV(\R^d)$ that converges to $u_0$ in $L^1(\R^d)$, and let $E_n$
denote the family $\{u^{\alpha_m}_n\}_{m}$ of entropy solutions to \eqref{1}
with $\alp=\alp_m$ and $u^n_0$ as initial data. We begin by showing
that $E_n$ is 
relatively compact in $C([0,T];L^1_{\rm loc})$.

The family $E_n$ is equicontinuous
in $C([0,T];L^1)$ by Corollary \ref{th:time}, and
Remark~\ref{rem-l2}\eqref{rem-ec-2}.
For each $t \in [0,T]$, $\{u^{\alpha_m}_n(\cdot,t)\}_{m}$ is relatively compact in
$L^1_{\rm loc}(\R^d)$ by the $L^1 \cap BV$-bound \eqref{nat:nonincrease} and
Helly's theorem.  By the Arzela-Ascoli theorem, $E_n$ is relatively compact in 
$C([0,T];L^1_{\rm loc})$ for any $n \in \mathbb{N}$. 

The relative compactness of $E$, and thus the existence of $w \in
C([0,T];L^1_{\rm loc})$, is now a consequence of the $L^1$-contraction
principle since 
\begin{equation}\label{contraction-eq}
\sup_{m\in\N} \|u^{\alpha_m}-u^{\alpha_m}_n\|_{C([0,T];L^1)} \leq \|u_0-u_0^n\|_{L^1} \ra 0 \quad \mbox{as $n \ra +\infty$.}
\end{equation}

Taking a subsequence if necessary, we can assume that $u^{\alpha_m}$ converges to $w$ in $C([0,T];L^1_{\rm loc})$ and almost everywhere in $Q_T$. In particular, by the a priori estimate \eqref{nat:nonincrease}, we
infer that $w\in L^\infty(Q_T)$. To prove that $w\in
  C([0,T];L^1)$, we observe that $E$ is equicontinuous in
$C([0,T];L^1)$ by the triangle inequality, the convergence estimate
\eqref{contraction-eq}, and the equicontinuity of $E_n$. Hence, for
any $R>0$, $m \in \mathbb{N}$, and $t,s \in [0,T]$,
\begin{equation*}
\begin{split}
& \|(w(\cdot,t)-w(\cdot,s)) \, \mathbf{1}_{|x|<R}\|_{L^1} \\
& \leq \|u^{\alpha_m}(\cdot,t)-u^{\alpha_m}(\cdot,s)\|_{L^1}\\
& \quad + \|(w(\cdot,t)-u^{\alpha_m}(\cdot,t)) \, \mathbf{1}_{|x|<R}\|_{L^1}
+\|(u^{\alpha_m}(\cdot,s)-w(\cdot,s)) \, \mathbf{1}_{|x|<R}\|_{L^1}\\
& \leq \co(1)+2 \, \|(w-u^{\alpha_m}) \, \mathbf{1}_{|x|<R}\|_{C([0,T];L^1)},
\end{split}
\end{equation*}
where $\co(1) \ra 0$ as $|t-s| \ra 0$ uniformly in $R$ and $m$. We
then conclude that 
$$
\|(w(\cdot,t)-w(\cdot,s))\|_{L^1} \leq \co(1)\quad\text{as } |t-s| \ra 0
$$
by first sending $m \ra +\infty$ and then $R \ra
+\infty$ using Fatou's lemma.
\end{proof}

Let us now verify that these limits satisfy the entropy inequalities of the preceding  definition.
\begin{lemma}\label{referee-lem}
Under the assumptions of Lemma \ref{Clem1}, $u$ and $w$ satisfy the
entropy inequalities of Definition \ref{edef}\eqref{entropy-scl} and~\eqref{entropy-ineq-local-eq} respectively.
\end{lemma}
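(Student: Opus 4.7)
The plan is to pass to the limit $\alpha_n \da 0$ (yielding $u$) or $\alpha_n \ua 2$ (yielding $w$) directly in the entropy inequality \eqref{entropy_ineq} satisfied by $u^{\alpha_n}$, along the subsequences from Lemma \ref{Clem1} and for fixed nonnegative $\phi \in C^\infty_c(\R^d \times [0,T))$, $k \in \R$, and $r > 0$.

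The three non-diffusive terms pass to their natural limits by dominated convergence. Indeed, $u^{\alpha_n} \ra u$ (resp.\ $w$) a.e.\ in $Q_T$ and the sequence is uniformly bounded in $L^\infty$ by \eqref{nat:nonincrease}, hence $|u^{\alpha_n}-k|$, $q_f(u^{\alpha_n},k)$ and $|u_0-k|$ converge with uniform $L^\infty$ bounds; the compact support of $\phi$ then yields convergence of the integrals. The sign function $\sgn(u^{\alpha_n}-k)$ converges a.e.\ on $\{u \neq k\}$, which has full measure in $Q_T$ for a.e.\ $k \in \R$; the limit inequality for all $k$ is recovered by standard continuity arguments.

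For both nonlocal terms, the first step is a uniform asymptotic for $\Levy^{\alpha_n}_r[\phi]$. Taylor-expanding $\phi$ and using the symmetry computation $\int_{|z|<r} z_iz_j\,|z|^{-d-\alpha}\,\dif z = (\delta_{ij}/d)\,S_d\,r^{2-\alpha}/(2-\alpha)$ yield, uniformly in $x$,
\begin{equation*}
\Levy^{\alpha_n}_r[\phi](x) \;=\; \frac{S_d\,G_d(\alpha_n)\,r^{2-\alpha_n}}{2d(2-\alpha_n)}\,\triangle\phi(x)\;+\;\cO\!\left(G_d(\alpha_n)\,\|D^3\phi\|_\infty\, r^{3-\alpha_n}\right),
\end{equation*}
which by \eqref{properties-Gdalpha} converges uniformly to $0$ as $\alpha_n\da 0$ and to $\triangle\phi$ as $\alpha_n\ua 2$. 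Since $|\varphi(u^{\alpha_n})-\varphi(k)|$ is uniformly bounded and $\Levy^{\alpha_n}_r[\phi]$ is supported in an $r$-neighborhood of $\supp\phi$, dominated convergence gives $\int |\varphi(u^{\alpha_n})-\varphi(k)|\,\Levy^{\alpha_n}_r[\phi]\,\dif x\,\dif t \ra 0$ when $\alpha_n\da 0$ and $\ra \int|\varphi(w)-\varphi(k)|\,\triangle\phi\,\dif x\,\dif t$ when $\alpha_n\ua 2$.

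The second nonlocal term behaves very differently in the two cases. For $\alpha_n \ua 2$, the crude bound $|\Levy^{\alpha_n, r}[\varphi(u^{\alpha_n})]| \leq 2\,G_d(\alpha_n)\,\|\varphi\|_{L^\infty(I(u_0))}\,S_d\,r^{-\alpha_n}/\alpha_n$ already vanishes because $G_d(\alpha_n)=\cO(2-\alpha_n)$ by \eqref{properties-Gdalpha}; this term drops out and combining with the preceding paragraph gives the entropy inequality of Definition \ref{edef} for $w$. The extra Sobolev regularity $\varphi(w) \in L^2(0,T;H^1)$ follows from a uniform-in-$\alpha$ energy estimate on $\|(-\triangle)^{\alpha_n/4}\varphi(u^{\alpha_n})\|_{L^2(Q_T)}$ (in the spirit of Lemma \ref{Clem2}), combined with the Fourier identity $|2\pi\xi|^{\alpha_n}\to|2\pi\xi|^2$ and Fatou's lemma. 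For $\alpha_n \da 0$, I split
\begin{equation*}
\Levy^{\alpha_n, r}[\varphi(u^{\alpha_n})] \;=\; \varphi(u^{\alpha_n})*\mu_{\alpha_n}^r \;-\; |\mu_{\alpha_n}^r|\,\varphi(u^{\alpha_n}), \qquad \mu_{\alpha_n}^r(\dif z) := \frac{G_d(\alpha_n)}{|z|^{d+\alpha_n}}\,\mathbf{1}_{|z|>r}\,\dif z,
\end{equation*}
whose total mass tends to $1$ by \eqref{properties-Gdalpha}. The second piece, paired with $\sgn(u^{\alpha_n}-k)\phi$, converges by dominated convergence to $-\int \sgn(u-k)\varphi(u)\phi\,\dif x\,\dif t$, producing exactly the source term in Definition \ref{edef} for $u$. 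The main obstacle is the convolution piece: a direct computation shows $\mu_{\alpha_n}^r(\{r<|z|<M\}) \ra 0$ for every $M>r$, so all the mass of $\mu_{\alpha_n}^r$ escapes to infinity, and, using the compact support of $\phi$, the integral $\int \sgn(u^{\alpha_n}-k)\phi\,(\varphi(u^{\alpha_n})*\mu_{\alpha_n}^r)\,\dif x\,\dif t$ vanishes provided the family $\{\varphi(u^{\alpha_n})(\cdot,t)\}$ is uniformly tight at spatial infinity; the latter I would obtain by approximating $u_0$ by compactly supported initial data and using the $L^1$-contraction \eqref{nat:L1-contraction} together with tail estimates on the linear fractional heat kernel.
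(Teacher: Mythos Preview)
Your treatment of the limit $\alpha_n\ua 2$ is correct but follows a different route from the paper's. The paper first sends $r\ra+\infty$ in \eqref{entropy_ineq}, collapsing the two nonlocal terms into the single expression $-\int|\varphi(u^\alpha)-\varphi(k)|\,(-\triangle)^{\alpha/2}\phi$, and then passes to the limit via Plancherel (so that $(-\triangle)^{\alpha/2}\phi\ra-\triangle\phi$ in $L^2$) combined with a uniform $L^2$ bound on $|\varphi(u^\alpha)-\varphi(k)|-|\varphi(k)|$. Your route---keep $r$ fixed, Taylor-expand $\Levy_r^{\alpha_n}[\phi]$ to extract $\triangle\phi$, and kill $\Levy^{\alpha_n,r}[\varphi(u^{\alpha_n})]$ by the crude bound $G_d(\alpha_n)=\cO(2-\alpha_n)$---is more elementary and also works. (Incidentally, $\varphi(w)\in L^2(0,T;H^1)$ is not part of the present lemma; it is handled separately afterwards.)

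The $\alpha_n\da 0$ case contains a genuine gap. The paper does \emph{not} work with Kruzhkov entropies here: it first invokes Lemma~\ref{lem01} to rewrite the entropy inequality with smooth convex $\eta\in C^1$, precisely so that $\eta'(u^{\alpha_n})\ra\eta'(u)$ everywhere and the source term $\eta'(u^{\alpha_n})\,\varphi(u^{\alpha_n})\,\phi$ passes to the limit by dominated convergence with no ``a.e.~$k$'' detour. Your ``a.e.~$k$ then continuity in $k$'' argument can in fact be salvaged (approach $k$ from one side and use the sign of $\varphi(k)$), but it is not the routine step you present it as. More importantly, your treatment of the convolution piece $\varphi(u^{\alpha_n})*\mu_{\alpha_n}^r$ via uniform spatial tightness is both overcomplicated and unjustified: tail estimates on the \emph{linear} fractional heat kernel say nothing direct about the nonlinear equation, and those tails behave like $|x|^{-d-\alpha}$ and become \emph{heavier} as $\alpha\da 0$, so uniform-in-$\alpha$ tightness is at best delicate. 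The clean observation (which the paper uses) is that the density of $\mu_{\alpha_n}^r$ is pointwise bounded by $G_d(\alpha_n)\,r^{-d-\alpha_n}\ra 0$, whence
\[
\big\|\varphi(u^{\alpha_n})*\mu_{\alpha_n}^r\big\|_{L^\infty}\;\leq\;G_d(\alpha_n)\,r^{-d-\alpha_n}\,\|\varphi(u^{\alpha_n})\|_{C([0,T];L^1)}\;\longrightarrow\;0,
\]
and the convolution term vanishes with no tightness needed.
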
  

In the proof we need the following lemma:

\begin{lemma}
\label{lem01}
A function $u \in L^\infty(Q_T) \cap L^\infty(0,T;L^1)$ is an entropy solution of~\eqref{1} (cf. Definition
\ref{L1-entropy}) if and only if for all convex~$\eta \in C^1(\R)$,
all~$r>0$ and all nonnegative~$\phi \in C_c^\infty(\R^{d} \times [0,T))$,  
\begin{equation}\label{entropy-ineq-eta}
\begin{split}
& \int_{Q_{T}} \Big(\eta(u)\,\partial_t\phi+q_{f}^\eta(u) \cdot \nabla \phi \Big) \, \dif x \, \dif t\\
& +\int_{Q_T} \Big(q_\varphi^\eta(u) \,\Levy^\alpha_r [\phi]+ \eta' (u)\,\Levy^{\alpha,r} [\varphi(u)]\,\phi
\Big) \, \dif x \, \dif t \\
&+\int_{\R^d}
\eta(u_0(x))\,\phi(x,0) \, \dif x\geq 0,
\end{split}
\end{equation}
where $q_{g}^\eta(u):=\int_0^u \eta'(\tau) \, g'(\tau) \, \dif \tau$
(for $g=f,\varphi$).
\end{lemma}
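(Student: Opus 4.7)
\medskip

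\noindent\textbf{Proof proposal.} My plan is to prove both implications by approximation and decomposition arguments, exploiting the fact that Kruzhkov entropies $u\mapsto|u-k|$ are convex (but only Lipschitz), while the convex~$C^1$ entropies form a richer class that can be reconstructed from the Kruzhkov family.

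For the implication ``\eqref{entropy-ineq-eta} $\Rightarrow$ Definition~\ref{L1-entropy},'' fix $k\in\R$ and let $\eta_\varepsilon\in C^2(\R)$ be a family of convex regularizations of $u\mapsto|u-k|$, for instance $\eta_\varepsilon(u):=\sqrt{(u-k)^2+\varepsilon^2}-\varepsilon$, which satisfies $\eta_\varepsilon\ra|\cdot-k|$ uniformly and $\eta_\varepsilon'\ra \sgn(\cdot-k)$ pointwise, with $|\eta_\varepsilon'|\leq 1$. Plug $\eta_\varepsilon$ into \eqref{entropy-ineq-eta} and pass to the limit as $\varepsilon\da 0$ using dominated convergence (legitimate because $u$ and $\varphi(u)$ are bounded, $\phi$ is compactly supported, and $\Levy^{\alpha,r}[\varphi(u)]\in L^\infty$). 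One computes that in the limit $q_f^{\eta_\varepsilon}(u)\ra \int_0^u\sgn(\tau-k)\,f'(\tau)\,\dif\tau=q_f(u,k)-f(k)$ and $q_\varphi^{\eta_\varepsilon}(u)\ra|\varphi(u)-\varphi(k)|-\varphi(k)$, where I used $f(0)=\varphi(0)=0$. The extra constants $-f(k)$ and $-\varphi(k)$ are harmless: the $f(k)$-term contributes $-f(k)\cdot\int\nabla\phi\,\dif x=0$ because $\phi$ has compact $x$-support, and the $\varphi(k)$-term contributes $-\varphi(k)\int\Levy^{\alpha}_r[\phi]\,\dif x=0$ by translation invariance of the Lebesgue measure (integrating \eqref{nat:Levy-form} over $x$ yields zero). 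Similarly, $\eta_\varepsilon(u_0)\ra|u_0-k|$. This recovers exactly \eqref{entropy_ineq}.

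For the reverse implication, the idea is to represent any convex $\eta\in C^1(\R)$ as a nonnegative superposition of Kruzhkov entropies plus an affine function. Since $u\in L^\infty(Q_T)$ we may truncate and assume $\eta$ is affine outside a compact set containing the range of $u$; in particular, the positive measure $\mu:=\eta''\geq 0$ (in the distributional sense, which is a genuine Radon measure by convexity of $\eta$) has compact support. Using the identity $\frac{1}{2}\,|u-k|''=\delta_k$ in $k$, one verifies the representation
\begin{equation*}
\eta(u)=a+b\,u+\frac{1}{2}\int_\R |u-k|\,\dif\mu(k),\qquad
\eta'(u)=b+\frac{1}{2}\int_\R\sgn(u-k)\,\dif\mu(k),
\end{equation*}
for some constants $a,b\in\R$, and similarly $q_f^\eta(u)=b\,f(u)+\frac{1}{2}\int(q_f(u,k)-f(k))\,\dif\mu(k)$ and $q_\varphi^\eta(u)=b\,\varphi(u)+\frac{1}{2}\int(|\varphi(u)-\varphi(k)|-\varphi(k))\,\dif\mu(k)$. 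Multiplying \eqref{entropy_ineq} by $\frac{1}{2}\,\dif\mu(k)\geq 0$, integrating in~$k$ (Fubini is justified by the compact support of $\mu$ and the boundedness of the integrands), and then adding the identity obtained from testing the equation with constants $\pm K$ with $K$ large enough that $|u-K|=K-u$ and $|u+K|=u+K$ on the support (which shows that $u$ is a \emph{distributional} solution and thus handles the affine contribution $a+b\,u$), yields \eqref{entropy-ineq-eta}, again after accounting for the additive constants $-f(k)$ and $-\varphi(k)$ which vanish upon integration as above.

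\textbf{Main obstacle.} The technical point I expect to require the most care is the bookkeeping of the constants $f(k)$ and $\varphi(k)$ that appear when relating $q_f^\eta,\,q_\varphi^\eta$ (antiderivatives normalized at $0$) to the Kruzhkov fluxes; in particular one must use the precise structure of $\Levy^\alpha_r$ (the second-order Taylor correction in the small-jump term is essential for $\int\Levy^\alpha_r[\phi]\,\dif x=0$ to hold without integrability issues). A secondary subtle point is showing that the Kruzhkov formulation implies $u$ solves the equation in $\mathcal{D}'$, so that the affine piece $a+b\,u$ in the decomposition of $\eta$ contributes an equality rather than merely an inequality.
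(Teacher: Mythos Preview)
Your proposal is correct. The ``if'' direction is essentially identical to the paper's argument (same regularization $\eta_\varepsilon(u)=\sqrt{(u-k)^2+\varepsilon^2}-\varepsilon$, same dominated-convergence passage to the limit).

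For the ``only if'' direction you and the paper take different but equivalent routes. The paper approximates $\eta$ by piecewise-linear convex functions $\tilde\eta_n(u)=a+bu+\sum_{i=1}^m c_i|u-k_i|$ (finite sums), combines the Kruzhkov inequalities linearly with the weak formulation for the affine part, and then passes to the limit $n\to\infty$; this forces it to deal carefully with \emph{everywhere} (not merely a.e.) convergence of $\tilde\eta_n'$, because the discontinuous $\sgn$ in the $\Levy^{\alpha,r}$-term is evaluated pointwise at $u(x,t)$. Your integral representation $\eta=(\text{affine})+\tfrac12\int|\cdot-k|\,\dif\mu(k)$ with $\mu=\eta''$ sidesteps that issue: since $\eta\in C^1$ forces $\mu$ to be atomless, the value of $\sgn(0)$ is irrelevant after integrating in $k$, and no limiting step in $n$ is needed. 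The trade-off is that your approach requires a Fubini justification for the $(x,t,k)$-integral (straightforward here by compact support of $\mu$ and boundedness of $u$), while the paper's discrete approximation avoids Fubini but pays with the pointwise-convergence refinement. Both approaches rest on the same two ingredients you correctly isolate: recovering the distributional identity from Kruzhkov with $k=\pm K$ large (handling the affine piece as an equality), and the vanishing of the spurious constants $f(k)$, $\varphi(k)$ upon integration in $x$ against $\nabla\phi$ and $\Levy^\alpha_r[\phi]$.
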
  

This result is well-known for (local) conservation laws, see
e.g. \cite[p.~27]{HoRi07}. Because of the presence of the
discontinuous sign function in the Kruzhkov formulation
\eqref{entropy_ineq}, any proof will be more technical than in the local
case and we therefore provide one in Appendix \ref{eta-entropy}.

\begin{proof}[Proof of Lemma \ref{referee-lem}] We begin with the proof for $w$ which is easier. 

\smallskip

\noindent {\bf 1.} {\it Entropy inequalities for $w$.} Using the
definition of $\Levy_r^\alpha$ and $\Levy^{\alpha,r}$ in \eqref{nat:Levy-form}, we send $r \ra
+\infty$ in the entropy inequality \eqref{entropy_ineq} and find that
\begin{equation}
\label{int}
\begin{split}
&\int_{Q_{T}} \Big( |u^\alpha-k|\,\partial_t\phi+q_{f}(u^\alpha,k) \cdot \nabla \phi- |\varphi(u^\alpha)-\varphi(k)| \,(-\triangle)^{\frac{\alpha}{2}}\phi \Big)\, \dif x \, \dif t \\
&+\int_{\R^d}|u_0(x)-k|\,\phi(x,0) \, \dif x\geq 0.
\end{split}
\end{equation}
Since $(-\triangle)^{\frac{\alpha}{2}}\phi=\mathcal{F}^{-1}
\left(|2 \, \pi \cdot|^\alpha \, \mathcal{F} \phi \right)$ and
$-\triangle \phi=\mathcal{F}^{-1} \left(|2 \, \pi \cdot|^2 \,
  \mathcal{F} \phi \right)$, by Plancherel
\begin{align}
\label{lim_2}-(-\triangle)^{\frac{\alpha}{2}} \phi \ra \triangle \, \phi \quad \text{in } L^2(Q_T)  \text{ as }
\alpha \ua 2.
\end{align}

 To get the entropy inequalities of Definition
 \ref{edef}\eqref{entropy-ineq-local-eq}, we must pass to the
 limit in \eqref{int}. This is straightforward for the local terms due to Lemma~\ref{Clem1} and~\eqref{nat:nonincrease}. For the nonlocal term, we first observe that 
\begin{equation*}
\begin{split}
& -\int_{Q_{T}} |\varphi(u^\alpha)-\varphi(k)| \,(-\triangle)^{\frac{\alpha}{2}}\phi\, \dif x \, \dif t\\
& =\int_{Q_{T}} \underbrace{\Big\{|\varphi(u^\alpha)-\varphi(k)|
    -|\varphi(k)|
  \Big\}}_{=:q(u^\alpha)} \Big\{\triangle\phi-\triangle\phi-(-\triangle)^{\frac{\alpha}{2}}\phi\Big\}\, \dif
x \, \dif t\\
&\leq\int_{Q_{T}} q(u^\alp) \, \triangle\phi \, \dif
x \, \dif t + \|q(u^\alp)\|_{L^2(Q_T)}\|\triangle\phi+(-\triangle)^{\frac{\alpha}{2}}\phi\|_{L^2(Q_T)}.
\end{split}
\end{equation*}
By \eqref{lim_2}, the second term tends to zero since
$\|q(u^\alp)\|_{L^2(Q_T)}$ is bounded independently of $\alp$. 
The boundedness follows from \eqref{nat:nonincrease} and an $(L^1,L^\infty)$-interpolation
argument since  $q \in W^{1,\infty}_{\rm loc}(\R)$ and $q(0)=0$.
By the $C([0,T];L^1_{\rm loc})$-convergence of $u^\alp$ (up to a subsequence), the first
term converges as $\alp\ua2$ to 
$$\int_{Q_{T}} |\varphi(w)-\varphi(k)| \, \triangle\phi\, \dif
x \, \dif t .$$
This completes the proof for $w$. 

\medskip

\noindent {\bf 2.} {\it Entropy inequalities for $u$.} Let us fix
$r>0$ for the duration of this proof and start from the entropy
inequalities \eqref{entropy-ineq-eta}, written for 
convex and $C^1$-entropies $\eta$. 

There is again no difficulty to pass to the limit as $\alpha \da 0$ in
the local terms of \eqref{entropy-ineq-eta}. For the first nonlocal
term, we use that $\Levy^\alpha_r [\phi] \ra 0$ uniformly on
$Q_T$. This is readily seen from \eqref{nat:Levy-form} and
\eqref{properties-Gdalpha}. Let us also notice that $q_\varphi^\eta$,
defined just below \eqref{entropy-ineq-eta}, satisfies $q_\varphi^\eta
\in W^{1,\infty}_{\rm loc}(\R)$ and $q_\varphi^\eta(0)=0$. Hence
$$
\int_{Q_{T}} q_\varphi^\eta(u^\alpha) \,\Levy^\alpha_r [\phi] \, \dif x \, \dif t \ra 0,
$$ 
since $q_\varphi^\eta(u^\alpha)$ is bounded in $C([0,T];L^1)$. For the remaining nonlocal term, we split the integral and get
\begin{equation*}
\begin{split}
& \int_{Q_{T}} \eta' (u^\alpha)\,\Levy^{\alpha,r} [\varphi(u^\alpha)]\,\phi
\, \dif x \, \dif t \\
&  \leq -\underbrace{G_d(\alpha) \int_{|z|>r} \frac{\dif z}{|z|^{d+\alpha}}}_{=:I} \int_{Q_{T}} \eta' (u^\alpha) \, \varphi(u^\alpha) \, \phi \, \dif x \, \dif t \\
& \quad +C \, \underbrace{\frac{G_d(\alpha)}{r^{d+\alpha}}}_{=:J} \|\varphi(u^\alpha)\|_{C([0,T];L^1)} \, \|\phi\|_{L^1(Q_T)}, 
\end{split}
\end{equation*}
where $C$ is an $L^\infty$-bound on $\eta'(u^\alpha)$. 
Notice that for all fixed $r$, $\lim_{\alpha \da 0} I=1$ and $\lim_{\alpha \da 0} J=0$ by \eqref{properties-Gdalpha}. 
Since $\eta'$ is continuous, we can pass to the limit as $\alpha \da 0$ in the inequality above, thanks to \eqref{nat:nonincrease}, the almost everywhere convergence of $u^\alpha$ (up to a subsequence), and the dominated convergence theorem. 

The limit in \eqref{entropy-ineq-eta} then implies that
\begin{equation*}
\begin{split}
& \int_{Q_{T}} \Big(\eta(u)\,\partial_t\phi+q_{f}^\eta(u) \cdot \nabla \phi -\eta' (u)\,\varphi(u)\,\phi \Big) \, \dif x \, \dif t +\int_{\R^d}
\eta(u_0(x))\,\phi(x,0) \, \dif x\geq 0,
\end{split}
\end{equation*}
for all convex $C^1$-entropies $\eta$ and fluxes $q_{f}^\eta(u)= \int_{0}^u \eta'(\tau) \, f'(\tau) \, \dif \tau$. 
It is then classical to get the desired Kruzhkov entropy inequalities of
Definition \ref{edef}\eqref{entropy-scl} from these inequalities, see
e.g. the if part of the proof in Appendix \ref{eta-entropy}.
\end{proof}

To prove that~$w$ satisfies \eqref{H1regularity}
of Definition \ref{edef}, we need to derive
an~$H^{\frac{\alpha}{2}}$-estimate on~$u^\alpha$. In the
sequel,~$H^{\frac{\alpha}{2}}(\R^d)$ denotes the 
fractional Sobolev space of~$u \in L^2(\R^d)$ such that
$\iint_{\R^{2d}} \frac{(u(x)-u(y))^2}{|x-y|^{d+\alpha}} \, \dif
x\,\dif y<+\infty$. The $H^{\frac{\alpha}{2}}$-semi-norm can be
defined in both the following equivalent ways: 
\begin{equation}\label{referee-def-norm}
|u|_{H^{\frac{\alpha}{2}}}^{2}:=\frac{G_d(\alpha)}{2} \iint_{\R^{2d}} \frac{(u(x)-u(y))^2}{|x-y|^{d+\alpha}} \, \dif x\,\dif y=\int_{\R^d} |2 \, \pi \, \xi|^{\alp} \, |\mathcal{F}u|^2 \, \dif \xi.
\end{equation}
The~$H^{\frac{\alpha}{2}}$-norm is defined
as~$\|u\|_{H^{\frac{\alpha}{2}}}^{2}:=\|u\|^2_{L^2}+|u|^2_{H^{\frac{\alpha}{2}}}$. The
equality in \eqref{referee-def-norm} is standard,
cf. e.g. \cite{Ada75}. In the sequel, the knowledge of the precise
constants will be important to get estimates uniform in $\alpha \ua
2$. For the sake of completeness, we therefore provide a short
computation of them in Appendix \ref{eta-entropy}.
\begin{lemma}
\label{Clem2}
Let $\alpha \in (0,2)$, $u_0 \in L^\infty\cap L^1(\R^d)$, \eqref{flux}--\eqref{Aflux} hold, and $u^\alpha$ be the unique entropy solution to \eqref{1}. Then
$$
\int_{\R^d} \Phi(u^\alpha(x,T)) \, \dif x+
|\varphi(u^\alpha)|^{2}_{L^2(0,T;H^{\frac{\alpha}{2}})}\leq
\int_{\R^d} \Phi(u_0(x)) \, \dif x,
$$
where $\Phi(u):=\int_0^u \varphi(\tau) \, \dif \tau$ for all $u \in \R$. 
\end{lemma}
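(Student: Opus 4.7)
I would apply the entropy inequality \eqref{entropy-ineq-eta} of Lemma \ref{lem01} with $\eta := \Phi$. Note that $\Phi$ is $C^1$ since $\varphi$ is continuous, and convex because $\Phi'' = \varphi' \geq 0$ a.e. With this choice, $\eta' = \varphi$, $q_\varphi^\eta(u) = \varphi(u)^2/2$, and $q_f^\eta(u) = \int_0^u \varphi f' \, \dif \tau$. The plan is to test against a function approaching $\mathbf{1}_{Q_T}$ in a controlled way, then symmetrize the surviving nonlocal term and let the truncation parameter $r \downarrow 0$.

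\textbf{Limit passage.} Pick $\chi_\delta \in C_c^\infty([0,T))$ with $\chi_\delta \equiv 1$ on $[0,T-\delta]$ dropping smoothly to $0$ before $T$, and $\psi_R(x) := \psi(x/R)$ for a standard smooth cutoff $\psi$ which equals $1$ near the origin. Insert $\phi(x,t) = \chi_\delta(t)\psi_R(x)$ in \eqref{entropy-ineq-eta}. Sending $\delta \downarrow 0$, the time-derivative term converges to $-\int_{\R^d} \Phi(u(x,T))\psi_R(x)\,\dif x$ by $C([0,T];L^1)$-continuity of $u$ and local Lipschitz continuity of $\Phi$. Next send $R \to \infty$: the gradient term vanishes because $|\nabla \psi_R| = \cO(1/R)$ while $|q_f^\eta(u)| \leq C\,u^2 \in L^1(Q_T)$; the $\Levy^\alpha_r[\phi]$-term vanishes because $|\Levy^\alpha_r[\psi_R]| = \cO(r^{2-\alpha}/R^2)$ uniformly (by the same Taylor expansion as in Step 3 of the proof of Theorem \ref{th:nonlin}) while $q_\varphi^\eta(u) \in L^1(Q_T)$; the initial data and $\Levy^{\alpha,r}$-terms converge by dominated convergence, the latter since $\Levy^{\alpha,r}[\varphi(u)] \in L^\infty(Q_T)$ and $\varphi(u) \in L^1(Q_T)$. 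One thus obtains, for every fixed $r>0$,
\begin{equation*}
\int_{\R^d} \Phi(u(x,T))\,\dif x - \int_{\R^d} \Phi(u_0(x))\,\dif x \leq \int_{Q_T} \varphi(u(x,t)) \, \Levy^{\alpha,r}[\varphi(u(\cdot,t))](x) \, \dif x \, \dif t.
\end{equation*}

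\textbf{Symmetrization and $r \downarrow 0$.} Since $\varphi(u) \in L^\infty \cap L^1$ and the measure $\mathbf{1}_{|z|>r}|z|^{-d-\alpha}\dif z$ is finite, Fubini applies; writing the integral as a double integral in $(x,y) = (x,x+z)$ and symmetrizing $x \leftrightarrow y$ yields
\begin{equation*}
\int_{Q_T} \varphi(u)\, \Levy^{\alpha,r}[\varphi(u)] \, \dif x \, \dif t = -\frac{G_d(\alpha)}{2} \int_0^T \iint_{|x-y|>r} \frac{(\varphi(u(x,t))-\varphi(u(y,t)))^2}{|x-y|^{d+\alpha}} \, \dif x \, \dif y \, \dif t.
\end{equation*}
As $r \downarrow 0$, monotone convergence combined with the first equality in \eqref{referee-def-norm} identifies this quantity with $-|\varphi(u)|^2_{L^2(0,T;H^{\alpha/2})}$, which produces the claimed bound.

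\textbf{Main obstacle.} The subtlest step is the symmetrization together with the subsequent monotone $r \downarrow 0$ passage, which really hinges on the fact that \eqref{referee-def-norm} provides an \emph{exact} constant for the $H^{\alpha/2}$-seminorm compatible with the L\'evy--Khinchine representation \eqref{nat:Levy-form} (not merely an equivalent norm). The quantitative bound on $\Levy^\alpha_r[\psi_R]$ used in the $R \to \infty$ limit is the other point requiring care, but is already in the paper. Everything else is a routine application of dominated convergence.
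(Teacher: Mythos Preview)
Your proof is correct and follows essentially the same route as the paper: choose $\eta=\Phi$ in Lemma~\ref{lem01}, test against a spatial cutoff, let the cutoff tend to $1$, symmetrize the $\Levy^{\alpha,r}$-term via the identity~\eqref{ineqH1-1}, and finish with monotone convergence as $r\downarrow 0$. The only cosmetic differences are that the paper invokes Remark~\ref{rem-def-cont} directly (rather than your explicit time cutoff $\chi_\delta$) and symmetrizes \emph{before} sending $R\to\infty$ (keeping $\gamma_R$ inside and applying~\eqref{ineqH1-1} with $u=\varphi(u^\alpha)\gamma_R$, $v=\varphi(u^\alpha)$), whereas you first pass $R\to\infty$ and then symmetrize with $u=v=\varphi(u^\alpha)$; your ordering is if anything slightly cleaner.
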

\begin{remark}
Note that $\Phi$ is nonnegative,
convex, and $0$ at $0$.
\end{remark}
\begin{proof}
We can take $\eta=\Phi$ in~\eqref{entropy-ineq-eta}, since it is $C^1$ and convex by \eqref{Aflux}.
Using also
Lemma \ref{lem01} and the continuity of~$u^\alpha$ in time with values
in~$L^1(\R^d)$, 
as in Remark~\ref{rem-def-cont}, we find that for all~$\phi \in C_c^\infty(\R^{d+1})$,
\begin{equation}\label{conv-tech}
\begin{split}
& \int_{Q_{T}} \Big(\Phi(u^\alp)\,\partial_t\phi+ q_{f}^{\Phi}(u^\alp) \cdot \nabla \phi \Big) \, \dif x \, \dif t\\
& +\int_{Q_{T}} \Big(q_{\varphi}^{\Phi}(u^\alp) \,\Levy^\alpha_r [\phi]+ \varphi(u^\alp)\,\Levy^{\alpha,r} [\varphi(u^\alp)]\,\phi \Big)
\, \dif x \, \dif t \\
&+\int_{\R^d}
\Phi(u_0(x))\,\phi(x,0) \, \dif x \geq \int_{\R^d} \Phi(u^\alpha(x,T)) \, \phi(x,T) \, \dif x.
\end{split}
\end{equation}
Then take $\phi(x,t)=\gamma_R(x)$, where $R>0$ and $\gamma_R$ is an
approximation of~$\mathbf{1}_{|x| < R}$ such that~$\gamma_R \in
C_c^\infty(\R^d)$,~$\{\gamma_R\}_{R>0}$ is bounded
in~$W^{2,\infty}(\R^d)$, $\gamma_R \ra 1$ in~$W^{2,\infty}_{\rm
  loc}(\R^d)$ as~$R \ra +\infty$.  
It is obvious that the $\nabla$- and $\Levy_r^{\alpha}$-terms in \eqref{conv-tech} vanish as $R \ra +\infty$, since $q_g^\Phi \in W^{1,\infty}_{\rm loc}(\R)$ and $q_g^\Phi(0)=0$ for $g=f,\varphi$. For the $\Levy^{\alpha,r}$-term,
a standard computation shows that for all~$u,v \in L^2(\R^d)$ and~$r>0$,
\begin{equation}
\label{ineqH1-1}
\begin{split}
& -\int_{\R^d} u \, \Levy^{\alpha,r}[v]  \, \dif x\\
& = -G_d(\alpha) \iint_{|z|>r} u(x) \, \frac{v(x+z)-v(x)}{|z|^{d+\alpha}} \, \dif z \, \dif x \\
& = \frac{G_d(\alpha)}{2} \left\{ \iint_{|x-y|>r}  u(x) \, v(x) \, \frac{\dif x \, \dif y}{|x-y|^{d+\alpha}}+\iint_{|x-y|>r}  u(y) \, v(y) \, \frac{\dif x \, \dif y}{|x-y|^{d+\alpha}}\right.\\
& \quad \left.-\iint_{|x-y|>r}  u(x) \, v(y) \, \frac{\dif x \, \dif y}{|x-y|^{d+\alpha}}-\iint_{|x-y|>r}  u(y) \, v(x) \, \frac{\dif x \, \dif y}{|x-y|^{d+\alpha}} \right\}\\
& = \frac{G_d(\alpha)}{2}  \iint_{|x-y|>r}  
\frac{(u(x)-u(y)) \, (v(x)-v(y))}{|x-y|^{d+\alpha}} \, \dif x \, \dif y. 
\end{split}
\end{equation}
Hence, by the dominated convergence theorem,
\begin{equation*}
\begin{split}
&\int_{Q_T} \varphi(u^\alpha)\,\Levy^{\alpha,r} [\varphi(u^\alpha)]\, \gamma_R \, \dif x \, \dif t\\
& = \frac{G_d(\alpha)}{2} \int_0^T \iint_{|x-y|>r} \left(\varphi(u^\alpha(x,t))-\varphi(u^\alpha(y,t))\right)\\
& \quad \cdot \left(\varphi(u^\alpha(x,t)) \, \gamma_R(x)-\varphi(u^\alpha(y,t)) \, \gamma_R(y)\right) \frac{\dif x \, \dif y}{|x-y|^{d+\alpha}} \, \dif t\\
& \ra \frac{G_d(\alpha)}{2} \int_0^T \iint_{|x-y|>r} \frac{\left(\varphi(u^\alpha(x,t))-\varphi(u^\alpha(y,t))\right)^2}{|x-y|^{d+\alpha}} \, \dif x \, \dif y \, \dif t
\end{split}
\end{equation*}
as~$R \ra +\infty$. Going to the limit in
  \eqref{conv-tech}, we then find that
\begin{equation*}
\begin{split}
& \int_{\R^d} \Phi(u^\alp(x,T))\, \dif x\\
&+\frac{G_d(\alpha)}{2} \int_0^T \iint_{|x-y|>r} \frac{\left(\varphi(u^\alpha(x,t))-\varphi(u^\alpha(y,t))\right)^2}{|x-y|^{d+\alpha}} \, \dif x \, \dif y \, \dif t\\
& \leq \int_{\R^d} \Phi(u_0(x))\, \dif x.
\end{split}
\end{equation*}
The proof is complete
by sending $r \da 0$ and using the monotone convergence theorem.
\end{proof}

From this energy type of estimate, we have the following result:

\begin{lemma}
Under the assumptions of Lemma \ref{Clem1}, $\varphi(w)\in
L^2(0,T;H^1)$. 
\end{lemma}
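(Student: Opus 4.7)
The plan is to combine Lemma \ref{Clem2} with a weak-compactness/Fatou argument running through intermediate Sobolev spaces $H^s$, $s\in(0,1)$. First I note that $\varphi(w)\in L^2(Q_T)$ is automatic, since $w\in L^\infty(Q_T)\cap L^\infty(0,T;L^1)$ and $\varphi$ is Lipschitz on $I(u_0)$ with $\varphi(0)=0$ (interpolate $L^1\cap L^\infty\subset L^2$). Hence the crux is to establish $\nabla\varphi(w)\in L^2(Q_T)^d$, equivalently $\int_0^T|\varphi(w)(\cdot,t)|^{2}_{\dot{H}^1}\,\dif t<+\infty$.

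Write $f_m:=\varphi(u^{\alp_m})$ and $f:=\varphi(w)$. By Lemma \ref{Clem2} and the Fourier representation \eqref{referee-def-norm},
$$\int_0^T\int_{\R^d}|2\pi\xi|^{\alp_m}\,|\mathcal{F}f_m(\xi,t)|^{2}\,\dif\xi\,\dif t \ \leq\ \int_{\R^d}\Phi(u_0)\,\dif x =: C_0.$$
For each fixed $s\in(0,1)$ and $m$ large enough that $\alp_m\geq 2s$, I will split the Fourier integral at $|2\pi\xi|=1$: on $\{|2\pi\xi|\leq 1\}$ one has $|2\pi\xi|^{2s}\leq 1$, while on $\{|2\pi\xi|>1\}$, since $2s\leq\alp_m$, one has $|2\pi\xi|^{2s}\leq|2\pi\xi|^{\alp_m}$. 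This yields the pointwise-in-$t$ estimate $|f_m(\cdot,t)|^{2}_{\dot{H}^s}\leq \|f_m(\cdot,t)\|^{2}_{L^2}+|f_m(\cdot,t)|^{2}_{\dot{H}^{\alp_m/2}}$. Integrating in time and using the uniform $L^\infty\cap L^1$ bounds on $u^{\alp_m}$, I obtain that $(f_m)$ is uniformly bounded in $L^2(0,T;H^s)$ by a constant $K$ independent of $s$ and of $m$ (for $m$ large).

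Next I will apply weak compactness in the reflexive Hilbert space $L^2(0,T;H^s)$: a subsequence of $(f_m)$ converges weakly to some $\tilde f\in L^2(0,T;H^s)$. Since $f_m\to f$ in $L^1_{\mathrm{loc}}(Q_T)$ by Lemma \ref{Clem1}, the two limits agree in $\mathcal{D}'(Q_T)$, so $\tilde f=f$. Weak lower semi-continuity of the $\dot H^s$-semi-norm then yields $\int_0^T|f(\cdot,t)|^{2}_{\dot{H}^s}\,\dif t\leq K$. Finally, since $|2\pi\xi|^{2s}\to|2\pi\xi|^{2}$ pointwise as $s\uparrow 1$, Fatou's lemma on the Fourier side gives $\int_0^T|f(\cdot,t)|^{2}_{\dot{H}^1}\,\dif t\leq K<+\infty$, which completes the argument.

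The main obstacle is that the Sobolev exponent $\alp_m/2$ in Lemma \ref{Clem2} varies with $m$, so one cannot invoke weak convergence or lower semi-continuity in a fixed Sobolev space directly from that estimate. Sacrificing a fixed $s<1$ fractional derivative is the key trick: the bound becomes uniform at the single Sobolev exponent $s$, and the full $H^1$-integrability of $\varphi(w)$ is recovered in the limit $s\uparrow 1$.
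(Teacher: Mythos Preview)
Your proof is correct and follows essentially the same strategy as the paper's: obtain a uniform $L^2(0,T;H^s)$ bound on $\varphi(u^{\alp_m})$ for each fixed $s<1$ from the energy estimate of Lemma~\ref{Clem2}, pass to the limit $m\to\infty$ to get the same bound for $\varphi(w)$, and then let $s\uparrow 1$ via Fatou on the Fourier side. The only technical difference is in the middle step: the paper passes to the limit $\alp_m\ua 2$ by writing the $H^s$-seminorm in Gagliardo form and applying Fatou directly (using the a.e.\ convergence from Lemma~\ref{Clem1}), whereas you use weak compactness in the Hilbert space $L^2(0,T;H^s)$ together with weak lower semicontinuity; both are standard and equally valid.
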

\begin{proof}
Recall first that by \eqref{nat:nonincrease} and a $(L^1,L^\infty)$-interpolation argument, $\{u^\alpha\}_{\alpha \in (0,2)}$ is bounded in $L^2(0,T;L^2)$. Using in addition the preceding lemma, we find a constant $C$ such that for all $\alpha \in (0,2)$, 
$$
\|\varphi(u^\alpha)\|_{L^2(0,T;H^{\frac{\alpha}{2}})} \leq C.
$$ 
Using the Fourier formula in \eqref{referee-def-norm}, 
$$
\int_{Q_T} (1+|2 \, \pi \, \xi|^{\alpha}) \, |\mathcal{F} \varphi(u^\alpha)|^2 \, \dif \xi \, \dif t \leq C
$$
(recall that $\mathcal{F}$ is the Fourier transform in space).
Now we use the following inequalities: for all~$1 \leq \beta \leq \alpha$ and all~$\xi \in \R^d$,
$$
(1+|2 \, \pi \, \xi|^\beta) \leq (1+|2 \, \pi \, \xi|)^\beta \leq (1+|2 \, \pi \, \xi|)^\alpha \leq 2^{\alpha-1} \, (1+|2 \, \pi \, \xi|^\alpha).
$$
We deduce that
$$
\int_{Q_T} (1+|2 \, \pi \, \xi|^{\beta}) \, |\mathcal{F}\varphi(u^\alpha)|^2 \, \dif \xi \, \dif t \leq 2^{\alpha-1} \, C.
$$
Going back to the integral formula in \eqref{referee-def-norm}, 
$$
\|\varphi(u^\alpha)\|_{L^2(0,T;L^2)}^2+\frac{G_d(\beta)}{2} \int_0^T \iint_{\R^{2d}} \frac{(\varphi(u^\alpha)(x,t)-\varphi(u^\alpha)(y,t))^2}{|x-y|^{d+\beta}} \, \dif x \, \dif y \, \dif t \leq  2^{\alpha-1} \, C.
$$
By Fatou's lemma, applied for $\alpha \ua 2$ with fixed~$\beta$,  
$$
\|\varphi(w)\|_{L^2(0,T;L^2)}^2+\frac{G_d(\beta)}{2} \int_0^T \iint_{\R^{2d}} \frac{(\varphi(w)(x,t)-\varphi(w)(y,t))^2}{|x-y|^{d+\beta}} \, \dif x \, \dif y \, \dif t \leq 2 \, C.
$$
Finally, Fatou's lemma applied to the Fourier formula shows that
\begin{equation*}
\begin{split}
2 \, C & \geq \liminf_{\beta \ua 2} \int_{Q_T} (1+|2 \, \pi \, \xi|^{\beta}) \, |\mathcal{F} \varphi(w)|^2 \, \dif \xi \, \dif t \\
& \geq \int_{Q_T} (1+|2 \, \pi \, \xi|^{2}) \, |\mathcal{F} \varphi(w)|^2 \, \dif \xi \, \dif t.
\end{split}
\end{equation*}
The proof is complete. 
\end{proof}

We end by the proof of Theorem \ref{thm:limit}.
\begin{proof}[Proof of Theorem \ref{thm:limit}]
Let $u,w \in L^\infty(Q_T) \cap C([0,T];L^1)$ be defined  in Lemma~\ref{Clem1}. By previous lemmas, they are entropy solutions of~\eqref{nat:scl} and~\eqref{nat:local-eq}, respectively. By uniqueness (cf. \cite{Kru70,Car99,KaRi03}), the whole sequences converge and the proof is complete.
\end{proof}

\section{Optimal example}
\label{sec:optim}

In this last section, we exhibit an example of an equation for which Theorems \ref{th:nonlin} and \ref{nat:th-Levy} are optimal. Note that the modulus in $f$ is the same than in \cite{Daf72,Luc80}. This modulus is optimal for linear fluxes, i.e. for equations of the form $\partial_t u+F \cdot \nabla u=0$ where $F \in \R^d$. This is readily seen by the formula $u(x,t)=u_0(x-t \, F)$. Here, we focus on the new fractional diffusion term. The proofs work for $\alpha=2$ and our example is also optimal for the results in \cite{CoGr99}. Let us finally mention that this example is motivated by Remark 2.1 of \cite{Dro03}
and similar
remarks in \cite{Imb05,DrIm06,Ali07}. 

Let us consider, for every~$\alpha \in [0,2]$ and~$\gamma,a>0$, 
\begin{equation}
\label{ex}
\begin{cases}
\partial_t u+a \,
(-\triangle)^{\frac{\alpha}{2}} u=0,\\
u(x,0)=\gamma \, \mathbf{1}_{Q}(\gamma^{-1} \, x),
\end{cases}
\end{equation}
where~$Q:=[-1,1]^d$. This is \eqref{1} with $u_0$ as above, $f\equiv0$ and $\varphi'\equiv a$. Notice that
\begin{equation}\label{formula-gamma}
\begin{cases}
\|u_0\|_{L^1}=2^d \, \gamma^{d+1},\\
|u_0|_{BV}=d \, 2^{d} \, \gamma^{d},\\
\Ent_i(u_0)=d \, 2^{d} \,\gamma^{d} \left(1+\left(\ln \frac{\gamma}{d}\right)^i \right) \mathbf{1}_{\gamma>d}, \quad \text{($i=1,2$),}
\end{cases}
\end{equation}
where $\Ent_i(u_0)$ is defined in \eqref{log-entrop}.

\subsection{Optimality of Theorem \ref{th:nonlin}} Let us fix $\alpha \in [0,2]$ and let us use the notation $u=:u_a$. Given $T>0$ and other parameters $b,c>0$, we define \label{page-modulus}
\begin{equation*}
\begin{split}
\omega_{a-b} & :=\begin{cases}|a^{\frac1\alp}-b^{\frac1\alp}|, &\quad 
\alp>1,\\
|a \, \ln a-b \, \ln b|,& \quad \alp=1,\\
|a-b|,& \quad  \alp<1,
\end{cases}\\
\sigma_T & :=\begin{cases} T^{\frac{1}{\alpha}}, &\quad 
\alp>1,\\
T \, |\ln T|,& \quad \alp=1,\\
T,& \quad  \alp<1,
\end{cases}\\
\sigma_\gamma & :=\begin{cases} \gamma^d, &\quad 
\alp>1,\\
\gamma^d \, \ln \gamma,& \quad \alp=1,\\
\gamma^{d+1-\alpha},& \quad  \alp<1.
\end{cases}
\end{split}
\end{equation*}
We also introduce the best Lipschitz constant of $a \mt u_a$ at $a=c$:
\begin{equation*}
\Lip_\varphi (u;c) := \limsup_{a,b\ra c} \frac{\|u_a-u_b\|_{C([0,T];L^1)}}{|a-b|}.
\end{equation*}
Theorem \ref{th:nonlin} and \eqref{formula-gamma} imply that the function $a \geq 0 \mt u_a \in C([0,T];L^1)$ is continuous at $a=0$ and locally Lipschitz continuous for $a>0$ with for all $c>0$,
\begin{equation*}
\begin{split}
\|u_a-u_b\|_{C([0,T];L^1)} & = \cO (\omega_{a-b}) \quad \mbox{as $a,b \da 0$,}\\
\Lip_\varphi (u;c) & = \cO (\sigma_T) \quad \mbox{as $T \da 0$,}\\
\Lip_\varphi (u;c) & = \cO (\sigma_\gamma) \quad \mbox{as $\gamma \ra +\infty$,}\\
\end{split}
\end{equation*}
while all the respective remaining parameters are fixed. The result below states that these estimates are optimal.
\begin{proposition}
\label{prop:opt-nonlin}
Let~$\alpha \in [0,2]$ and $c > 0$. 
\begin{enumerate}[\rm (i)]
\item \label{item-modulus} For all $T,\gamma>0$, $\liminf_{a,b\da 0}\frac{\|u_a-u_b\|_{C([0,T];L^1)}}{\omega_{a-b}}>0.$
\item \label{item-time} For all $\gamma>0,$ $\liminf_{T \da 0} \frac{\Lip_\varphi (u;c)}{\sigma_T}>0.$
\item \label{test-item} For all $T>0$, $\liminf_{\gamma \ra +\infty} \frac{\Lip_\varphi (u;c)}{\sigma_\gamma}>0.$
\end{enumerate}
\end{proposition}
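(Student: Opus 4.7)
The equation \eqref{ex} is linear and isotropic, so $u_a(x,t)=u_1(x,at)$ and, by self-similarity of the fractional heat kernel, $u_a(x,t)=\gamma\,W(x/\gamma,\,at/\gamma^\alpha)$, where $W$ denotes the fractional heat flow of $\mathbf{1}_Q$ with $Q=[-1,1]^d$. Introduce the \emph{escaped mass}
$$P(\tau):=\int_{Q^c} W(y,\tau)\,\dif y,\qquad \tau\geq 0.$$
Testing $u_a(\cdot,t)-u_b(\cdot,t)$ against the bounded function $\phi:=2\mathbf{1}_{\gamma Q}-1\in L^\infty(\R^d)$ and invoking mass conservation for the linear fractional heat equation yield the duality lower bound
$$\|u_a-u_b\|_{C([0,T];L^1)}\,\geq\,2\gamma^{d+1}\sup_{t\in[0,T]}\bigl|P(at/\gamma^\alpha)-P(bt/\gamma^\alpha)\bigr|.$$

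The main analytical step is a sharp asymptotic expansion of $P$ near $\tau=0$. Using the probabilistic representation $W(x,\tau)=\Pr[x-\tau^{1/\alpha}Z\in Q]$, where $Z$ is a symmetric $\alpha$-stable random vector, one finds
$$P(\tau)=2^d-E\Bigl[\prod_{i=1}^d (2-\tau^{1/\alpha}|Z_i|)_+\Bigr].$$
Using the tail estimate $\Pr[|Z_i|>r]\sim c\,r^{-\alpha}$ and a truncated-first-moment computation at the threshold $\tau^{-1/\alpha}$, one obtains for small $\tau>0$ an expansion
$$P(\tau)=c_\alpha\,\omega(\tau)+R(\tau),\qquad |R'(\tau)|=\co(\omega'(\tau))\ \text{as}\ \tau\da 0,$$
with $c_\alpha>0$, where $\omega(\tau)=\tau^{1/\alpha}$, $\tau|\ln \tau|$, or $\tau$ according as $\alpha>1$, $\alpha=1$, $\alpha<1$. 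The derivative estimate on the remainder follows by differentiating under the expectation and reusing the tail bound.

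For the first statement of the proposition, set $\tau_i:=a_iT/\gamma^\alpha$: the expansion combined with the concavity inequality $|\tau_1-\tau_2|\leq\omega'(\tau_1\vee\tau_2)^{-1}|\omega(\tau_1)-\omega(\tau_2)|$ yields $|P(\tau_1)-P(\tau_2)|\geq (c_\alpha/2)|\omega(\tau_1)-\omega(\tau_2)|$ for $\tau_i$ small, and the explicit $(T,\gamma)$-scaling relating $|\omega(\tau_1)-\omega(\tau_2)|$ to $\omega_{a-b}$ in each regime provides a positive liminf. For the second and third statements, letting $a,b\to c$ in the duality bound and using $P'(s)\sim c_\alpha\omega'(s)$ gives
$$\mathrm{Lip}_\varphi(u;c)\,\geq\,\frac{2\gamma^{d+1}}{c}\sup_{s\in[0,cT/\gamma^\alpha]}s\,|P'(s)|,$$
and a direct computation shows that this supremum is equivalent to $\sigma_T$ as $T\da 0$ (with $\gamma,c$ fixed) and to $\sigma_\gamma$ as $\gamma\ra +\infty$ (with $T,c$ fixed) in each of the three regimes of $\alpha$, up to a positive constant.

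The main obstacle is the sharpness of the remainder estimate $|R'(\tau)|=\co(\omega'(\tau))$ in the critical case $\alpha=1$, where both the leading term and the remainder carry logarithmic contributions that must be disentangled in the expansion of the stable density; the noncritical cases reduce to classical asymptotics of the truncated first moment of a one-dimensional symmetric $\alpha$-stable distribution.
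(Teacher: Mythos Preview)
Your strategy coincides with the paper's at the outset: both bound $\|u_a-u_b\|_{L^1}$ from below by $\bigl|\int_{\gamma Q}(u_a-u_b)\bigr|$, which in your notation is $2\gamma^{d+1}\bigl|P(at/\gamma^\alpha)-P(bt/\gamma^\alpha)\bigr|$. The paper then expresses this quantity via Plancherel as the explicit integral $\int(e^{-a|\xi|^\alpha}-e^{-b|\xi|^\alpha})\prod_i\sinc^2(\xi_i)\,\dif\xi$ and extracts the required lower bounds by direct manipulation: a change of variable $\xi\mapsto a^{1/\alpha}\xi$ and $L^\infty$-weak-$\star$ convergence of $\sin^2$ for $\alpha>1$, and a splitting $\{1<|\xi_1|<a^{-1}\}\cup\{|\xi_1|<1\}\cup\{|\xi_1|>a^{-1}\}$ with an explicit logarithmic count for $\alpha=1$. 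Your alternative is a real-space expansion $P(\tau)=c_\alpha\,\omega(\tau)+R(\tau)$ obtained from stable-tail asymptotics.

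There are two genuine gaps. First, the expansion with the stated control $|R'(\tau)|=\co(\omega'(\tau))$ is asserted, not proved; the truncated-moment heuristic gives the leading order, but controlling the \emph{derivative} of the remainder---especially for $\alpha=1$, where you yourself flag the logarithmic terms as an ``obstacle''---is precisely where the work lies, and you have not done it. This is the entire analytic content of the proposition. Second, even granting the expansion, your passage to $|P(\tau_1)-P(\tau_2)|\geq(c_\alpha/2)|\omega(\tau_1)-\omega(\tau_2)|$ via the concavity inequality fails when the ratio $a/b$ is unbounded: you need $|R'(\xi)|\leq(c_\alpha/2)\,\omega'(\tau_1\vee\tau_2)$ for all $\xi\in[\tau_1\wedge\tau_2,\tau_1\vee\tau_2]$, but $|R'|=\co(\omega')$ only yields $|R'(\xi)|\leq\epsilon\,\omega'(\xi)$, and $\omega'(\tau_1\wedge\tau_2)/\omega'(\tau_1\vee\tau_2)$ is unbounded for $\alpha\geq1$. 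This is repairable (e.g.\ prove the sharper $R'=\cO(1)$ for $\alpha>1$, or treat $a/b$ bounded and $a/b\to0$ separately), but as written item~\eqref{item-modulus} is incomplete. The paper's Fourier computation avoids both issues by never isolating a remainder: it works directly with the difference $e^{-a|\xi|^\alpha}-e^{-b|\xi|^\alpha}$ and uses exact identities rather than asymptotic expansions.
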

\begin{remark}\label{rem-opt-nonlin}
This result shows that the modulus of continuity in
  $\varphi-\psi$ derived in \eqref{main-constants-first} is
  optimal for linear diffusion functions. It also shows that the $T$- and $u_0$-dependencies of this
  modulus are optimal in the limits $T\da 0$ or
  $\frac{\|u_0\|_{L^1}}{|u_0|_{BV}}\ra +\infty$
 (recall that $\frac{\|u_0\|_{L^1}}{|u_0|_{BV}} \sim \gamma$ by \eqref{formula-gamma}).
\end{remark}

\subsection{Optimality of Theorem \ref{nat:th-Levy}} Let us now use the notation $u=:u^\alpha$ to emphasize the dependence on $\alpha$. Given $\lambda \in (0,2)$, we define
\begin{equation*}
\begin{split}
\tilde{\sigma}_M & :=\begin{cases} M^{\frac{1}{\lambda}} \, |\ln M|, &\quad 
\lambda>1,\\
M \, \ln^2 M,& \quad \lambda=1,\\
M,& \quad \lambda<1,
\end{cases}\\
\tilde{\sigma}_\gamma & :=\begin{cases} \gamma^d, &\quad 
\lambda>1,\\
\gamma^d \, \ln^2 \gamma,& \quad \lambda=1,\\
\gamma^{d+1-\lambda} \, \ln \gamma,& \quad  \lambda<1,
\end{cases}
\end{split}
\end{equation*}
where $M:=T \, a$. We also consider the best Lipschitz constant of $\alpha \mt u^\alpha$ at $\alpha=\lambda$ defined in \eqref{nat:lip}. Then, Theorem \ref{nat:th-Levy} and \eqref{formula-gamma} imply that for all $\lambda \in (0,2)$,
\begin{equation*}
\begin{split}
\Lip_\alpha (u;\lambda) & = \cO (\tilde{\sigma}_M) \quad \mbox{as $M \da 0$,}\\
\Lip_\alpha (u;\lambda) & = \cO (\tilde{\sigma}_\gamma) \quad \mbox{as $\gamma \ra +\infty$,}
\end{split}
\end{equation*}
while all the respective remaining parameters are fixed. The result below states that these estimates are optimal.

\begin{proposition}
\label{prop:opt-powers}
Let $T,a>0$, $M=T \, a$, and $\lambda \in (0,2)$. There exist $M_0,\gamma_0>0$ such that:
\begin{enumerate}[\rm (i)]
\item \label{item-time-Levy} For all $ \gamma_0 \geq \gamma>0$, $\liminf_{M \da 0} \frac{\Lip_\alpha (u;\lambda)}{\tilde{\sigma}_M}>0.$
\item \label{test-item-Levy} For all $M_0 \geq M>0$, $\liminf_{\gamma \ra +\infty} \frac{\Lip_\alpha (u;\lambda)}{\tilde{\sigma}_\gamma}>0.$
\end{enumerate}
\end{proposition}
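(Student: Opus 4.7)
The plan is to reduce both statements to a single explicit dimensionless integral whose asymptotics can be read off by scaling. Since \eqref{ex} is linear, Plancherel yields
\[
\int u^\alpha(\cdot,T)\,u_0\,\dif x = \int e^{-M|2\pi\xi|^\alpha}|\hat u_0(\xi)|^2\,\dif\xi
\]
with $M:=aT$. Testing against $\phi:=u_0/\gamma$ (which satisfies $\|\phi\|_\infty=1$) and letting $\alpha\to\lambda$ with $\beta=\lambda$ fixed in the definition \eqref{nat:lip} of $\Lip_\alpha(u;\lambda)$, dominated convergence (justified by the exponential decay $e^{-M|2\pi\xi|^\sigma}$ uniform for $\sigma$ near $\lambda$) gives
\[
\Lip_\alpha(u;\lambda)\;\geq\;\frac{M}{\gamma}\left|\int_{\R^d}|2\pi\xi|^\lambda\ln|2\pi\xi|\,e^{-M|2\pi\xi|^\lambda}|\hat u_0(\xi)|^2\,\dif\xi\right|.
\]

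Next, plugging in the explicit formula $\hat u_0(\xi)=\gamma\prod_i\sin(2\pi\gamma\xi_i)/(\pi\xi_i)$ and successively substituting $\eta:=2\pi M^{1/\lambda}\xi$ and $\eta:=A\zeta$ (with $A:=\gamma M^{-1/\lambda}$) produces a crucial cancellation: the two logarithms (from $\ln|2\pi\xi|$ and from the scalings) combine, via the identity $\ln A+\lambda^{-1}\ln M=\ln\gamma$, into the single factor $\ln(|\eta|/\gamma)$. The $M$-dependence then survives only through the gentle cutoff $e^{-|\eta/A|^\lambda}$ and through $A$, leaving the dimensionless lower bound
\[
\Lip_\alpha(u;\lambda)\;\geq\;\bigl(\tfrac{2}{\pi}\bigr)^{\!d}\gamma^{d+1-\lambda}\,M\,\bigl|K_\gamma^\lambda(A)\bigr|,\qquad K_\gamma^\lambda(A):=\int_{\R^d}|\eta|^\lambda\ln\frac{|\eta|}{\gamma}\,e^{-|\eta/A|^\lambda}\prod_{i=1}^d\frac{\sin^2\eta_i}{\eta_i^2}\,\dif\eta.
\]

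It remains to bound $|K_\gamma^\lambda(A)|$ from below as $A\to+\infty$, separately in the three regimes. For $\lambda<1$ the integrand is uniformly dominated by an integrable function, so $K_\gamma^\lambda(A)\to\tilde c_d^\lambda-c_d^\lambda\ln\gamma$, where $c_d^\lambda:=\int|\eta|^\lambda\prod\sin^2\eta_i/\eta_i^2\,\dif\eta>0$; taking $\gamma_0$ small enough makes the $-c_d^\lambda\ln\gamma$ term dominant and immediately gives both (i) (for fixed $\gamma\in(0,\gamma_0]$) and (ii) (via $|K_\gamma^\lambda(A)|\sim c_d^\lambda\ln\gamma$ as $\gamma\to+\infty$). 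For $\lambda>1$ the weight is no longer integrable and the cutoff localizes $K_\gamma^\lambda(A)$ to $|\eta|\lesssim A$; the dominant configurations have one coordinate of order $A$ and the others of unit order, and an explicit computation via $s:=(|\eta|/A)^\lambda$ and a Gamma integral yields $K_\gamma^\lambda(A)\sim A^{\lambda-1}(\ln A-\ln\gamma)\,\Gamma(1-1/\lambda)/\lambda$. Since $\ln A-\ln\gamma=|\ln M|/\lambda$ and $A^{\lambda-1}=\gamma^{\lambda-1}M^{-(\lambda-1)/\lambda}$, the prefactor $\gamma^{d+1-\lambda}M\cdot A^{\lambda-1}|\ln M|$ collapses to $\gamma^d M^{1/\lambda}|\ln M|$, matching $\tilde\sigma_M$ for (i) and $\tilde\sigma_\gamma$ for (ii). The critical case $\lambda=1$ is the most delicate: using $\sin^2\eta_i=(1-\cos 2\eta_i)/2$ together with $\int_1^A\dif\eta/\eta\sim\ln A$ yields $K_\gamma^1(A)\sim(\ln A)^2/2-\ln\gamma\cdot\ln A$, which becomes $\sim(\ln M)^2/2$ for (i) and $\sim(\ln\gamma)^2/2$ for (ii).

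The hard part is precisely this critical case $\lambda=1$, where the cancellation of Step~2 and the logarithmic divergence of $c_d^1$ interact at the same order; the oscillatory corrections $\int\cos(2\eta_i)\cdots\,\dif\eta_i$ must be shown to be strictly lower order, which can be carried out by a single integration by parts in each coordinate against the rapidly oscillating factor. In all three regimes, the thresholds $\gamma_0$ and $M_0$ in the statement are chosen precisely so that the leading-order constants computed above do not vanish.
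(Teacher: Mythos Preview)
Your approach coincides with the paper's: both test $u^\alpha-u^\beta$ against $u_0$ (equivalently, integrate over $\gamma Q$), pass to Fourier variables, and differentiate in $\alpha$ to reach the same integral; your $K_\gamma^\lambda(A)$ is exactly the paper's integral in \eqref{opt-tech-100} after the substitution $\eta=2\pi\gamma\xi$. Packaging both limits through the single parameter $A=\gamma M^{-1/\lambda}$ (so that (i) and (ii) both become $A\to+\infty$) is a neat unification, but the subsequent case analysis is the same as the paper's.

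One point needs care. In the subcase $\lambda>1$ of item (ii), the quantity $\ln A-\ln\gamma=-\lambda^{-1}\ln M$ is a fixed constant, so your asymptotic $K_\gamma^\lambda(A)\sim c_0\,A^{\lambda-1}(\ln A-\ln\gamma)$ omits a correction $c_1\,A^{\lambda-1}$ of the \emph{same} order, arising from the $\int_0^\infty u^{\lambda-2}(\ln u)\,e^{-u^\lambda}\,\dif u$ piece after scaling; the true behaviour is $K_\gamma^\lambda(A)=A^{\lambda-1}\bigl[-c_0\lambda^{-1}\ln M+c_1+\co(1)\bigr]$. Your final sentence about choosing $M_0$ so that the leading constant does not vanish is exactly what saves this (and is what the paper does, showing the $|\xi|>1$ contribution dominates the $|\xi|<1$ one for $\tilde M$ small), but the ``$\sim$'' as written is misleading in that regime.
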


\begin{remark}\label{rem-opt-powers}
This result shows that the $M$- and $u_0$-dependencies in
  \eqref{main-esti-second} are optimal at the limits $M=T \,
  \|\varphi'\|_\infty \da 0$ or $\frac{\|u_0\|_{L^1}}{|u_0|_{BV}}\ra+\infty$. 
\end{remark}

\subsection{Proofs} 

\begin{proof}[Proof of Proposition~\ref{prop:opt-nonlin}] Let us prove each items in order.

\medskip

\noindent {\bf 1.} {\it Item \eqref{item-modulus}.} 
Let us first assume that $T=\gamma=1$. The general case
  will follow from a rescaling argument given at the end of the
  proof. Let us define 
\begin{align}\label{def-error}
\E_Q:=\int_Q u_a(x,1) \, \dif x-\int_Q u_b(x,1) \, \dif x.
\end{align}
Since 
$
\|u_a-u_b\|_{C([0,1];L^1)} \geq \|u_a(\cdot,1)-u_b(\cdot,1)\|_{L^1}\geq |\E_Q|, 
$
it suffices to
show that $\liminf_{a,b \da 0} \frac{|\E_Q|}{\omega_{a-b}}>0$. It is well-known that~$u_a(x,t)=\mathcal{F}^{-1}(e^{-t \, a \, |2 \, \pi \cdot|^\alp}) \ast \mathbf{1}_Q(x)$.
A short
computation shows that
\begin{equation}\label{opt-nonlin-tech3}
\begin{split}
\E_Q & =\int \mathcal{F}^{-1}(e^{-a \, |2 \, \pi \cdot|^\alp }-e^{-b \, |2 \, \pi \cdot|^\alp }) \, (\mathbf{1}_Q \ast
\mathbf{1}_Q) \,\dif x\\
& = \int
(e^{-a \, |2 \, \pi \, \xi|^\alp }-e^{-b \, |2 \, \pi \, \xi|^\alp }) \left(\mathcal{F} \mathbf{1}_Q\right)^2 \dif \xi\\
& = \frac{2^d}{\pi^d} \int
(e^{-a \, |\xi|^\alp }-e^{-b \, |\xi|^\alp }) \prod_{i=1}^d \sinc \! {}^2 (\xi_i) \, \dif \xi\\
& = \frac{2^d}{\pi^d} \int \int_0^1
(b-a) \, |\xi|^\alpha \, e^{-(\tau \, a+(1-\tau) \, b) \, |\xi|^\alp} \prod_{i=1}^d \sinc \! {}^2 (\xi_i) \, \dif \tau \, \dif \xi,
\end{split}
\end{equation}
where $\xi=:(\xi_1,\dots,\xi_d)$ and $\sinc(\xi_i):=\frac{\sin \xi_i}{\xi_i}$. To get the third line, we have used the formula~$
\mathcal{F} \mathbf{1}_Q(\xi)=\prod_{i=1}^d \frac{\sin (2 \, \pi \, \xi_i)}{\pi \, \xi_i}
$
and the change of variable $2 \, \pi \, \xi \mt \xi$. We now give separate arguments for
  the cases $\alp<1$, $\alp=1$, and $\alp>1$.

\medskip

{\bf a.} \emph{The case $\alpha<1$.} This is obvious since $
0<\int |\xi|^\alpha \prod_{i=1}^d \sinc \! {}^2 (\xi_i) \, \dif \xi <+\infty.
$

\medskip

{\bf b.} \emph{The case $\alpha>1$.} Note that~$|\xi|^\alpha \leq d^{\alpha-1} \sum_{i=1}^d |\xi_i|^\alpha$. Hence, by~\eqref{opt-nonlin-tech3},   
\begin{equation}\label{referee-tech1}
|\E_Q| \geq I_{a,b}  \int \int_0^1
|a-b| \, |\xi_1|^\alpha \, e^{-d^{\alpha-1} \, (\tau \, a+(1-\tau) \, b) \, |\xi_1|^\alpha} \, \sinc \! {}^2 (\xi_1) \, \dif \tau \, \dif \xi_1
\end{equation}
where
$$
I_{a,b}=\frac{2^d}{\pi^d} \prod_{i=2}^d \int  e^{-d^{\alpha-1} \, (a \vee b) \, |\xi_i|^\alpha} \, \sinc \! {}^2 (\xi_i) \, \dif \xi_i.
$$
Since $e^{-d^{\alpha-1} \, (a \vee b) \, |\xi_i|^\alpha} \ra 1$ as $a,b \da 0$,
\begin{equation}\label{referee-tech2}
I_{a,b} \geq C_0:=\frac{2^{d-1}}{\pi^d} \prod_{i=2}^d \int \sinc \! {}^2 (\xi_i) \, \dif \xi_i>0,
\end{equation} 
for all~$a,b>0$ sufficiently small.
Hence, assuming e.g. that~$a > b$, we get 
\begin{equation}
\label{opt-nonlin-tech2}
\begin{split}
|\E_Q| & \geq C_0\, \underbrace{\int 
a \, |\xi_1|^{\alpha-2} \, e^{-d^{\alpha-1} \, a \, |\xi_1|^{\alpha}} \, \sin^2 (\xi_1)  \, \dif \xi_1}_{=:I_a}\\
& \quad - C_0 \int
b \, |\xi_1|^{\alpha-2} \, e^{-d^{\alpha-1} \, b \, |\xi_1|^{\alpha}} \, \sin^2 (\xi_1) \, \dif \xi_1.
\end{split}
\end{equation}
Before continuing, notice that this estimate is valid for $\alpha=1$; this is will be useful later. Let us continue the case $\alpha>1$ by changing variables, 
$$
I_a=a^{\frac 1 \alp} \int |\xi_1|^{\alpha-2} \, e^{-d^{\alpha-1} \, |\xi_1|^{\alp}}  \, \sin^2 (a^{-\frac{1}{\alpha}} \, \xi_1) \, \dif \xi_1.
$$
Doing the same for the~$b$-integral and adding and subtracting term, 
\begin{equation}\label{opt-nonlin-tech1}
\begin{split}
|\E_Q| & \geq C_0 \, (a^{\frac 1 \alp}-b^{\frac 1 \alp}) \int |\xi_1|^{\alpha-2} \, e^{-d^{\alpha-1} \, |\xi_1|^\alp} \, \sin^2 (a^{-\frac{1}{\alpha}} \, \xi_1) \,  \dif \xi_1\\
& \quad +C_0 \, b^{\frac 1 \alp}  \int |\xi_1|^{\alpha-2} \, e^{-d^{\alpha-1} \, |\xi_1|^\alp}   \left\{\sin^2 (a^{-\frac{1}{\alpha}} \, \xi_1)-\sin^2 (b^{-\frac{1}{\alpha}} \, \xi_1) \right\}\dif \xi_1\\
& =:C_0 \, (a^{\frac 1 \alp}-b^{\frac 1 \alp}) \, I_1+C_0 \, b^{\frac 1 \alp} \, I_2.
\end{split}
\end{equation}
By a Taylor expansion and an integration by parts,
\begin{align*}
|I_2| & \leq (a^{\frac 1 \alp}-b^{\frac 1 \alp}) \, \bigg|\int_0^1 \int a_{\alpha,\tau}^{-2} \, \underbrace{|\xi_1|^{\alpha-2} \, \xi_1 \, e^{-d^{\alpha-1} \, |\xi_1|^\alp}}_{=:f(\xi_1)} \nonumber \\ 
&  \quad \cdot \underbrace{2 \, \sin \left(a_{\alpha,\tau}^{-1} \, \xi_1 \right) \cos \left(
a_{\alpha,\tau}^{-1} \, \xi_1 \right) }_{=\sin \left(2 \, a_{\alpha,\tau}^{-1} \, \xi_1 \right)} \, \dif \xi_1 \, \dif \tau\bigg| \nonumber \\
& \leq \frac{1}{2} \, b^{-\frac{1}{\alpha}}  \, (a^{\frac 1 \alp}-b^{\frac 1 \alp}) \left|\int_0^1 \int f'(\xi_1) \, \cos \left(2 \, a_{\alpha,\tau}^{-1} \, \xi_1 \right) \dif \xi_1 \, \dif \tau\right|,
\end{align*}
where~$a_{\alpha,\tau}:=\tau \, a^{\frac 1 \alp}+(1-\tau) \, b^{\frac
  1 \alp}$ and~$f'$ is integrable when~$\alpha>1$. We deduce that~$
C_0 \, b^{\frac 1 \alp} \, I_2=( a^{\frac 1 \alp}-b^{\frac 1 \alp}
)\co(1)$ as~$a,b \da 0$, since for fixed~$\tau$,~$\cos \left(2 \, a_{\alpha,\tau}^{-1} \, \cdot \right)$ converges to its zero mean
value in~$L^\infty$-weak-$\star$. By a similar argument
  $\sin^2 (a^{-\frac{1}{\alpha}} \, \cdot )$ also
  weakly-$\star$ converges to its positive mean value $m$ and hence
\begin{equation*}
\lim_{a,b \da 0} I_1=m \int |\xi_1|^{\alpha-2} \, e^{-d^{\alpha-1} \, |\xi_1|^\alp} \,  \dif \xi_1>0.
\end{equation*}
We thus conclude the result from~\eqref{opt-nonlin-tech1}.

\medskip

{\bf c.} \emph{The case $\alpha=1$.} We restart from~\eqref{opt-nonlin-tech2} assuming again that~$a > b$,~$a,b$ small. This time we cut~$I_a$ into three pieces.
$$
I_a=\int_{1<|\xi_1| <a^{-1} } \dots+\int_{|\xi_1| < 1} \dots+\int_{|\xi_1| > a^{-1} } \dots.
$$
We do the same for the~$b$-integral and we get 
\begin{align*}
  |\E_Q| & \geq C_0 \int_{1<|\xi_1|<a^{-1}} a \, |\xi_1|^{-1} \, e^{- a \, |\xi_1|} \, \sin^2 (\xi_1) \,  \dif \xi_1\nonumber\\
  & \quad -C_0 \int_{1<|\xi_1|<b^{-1}} b \, |\xi_1|^{-1} \, e^{- b \, |\xi_1|} \, \sin^2 (\xi_1)  \,  \dif \xi_1\nonumber\\
  & \quad +C_0 \left(\int_{|\xi_1| <1 } \dots-\int_{|\xi_1| <1 } \dots
  \right)+C_0 \left(\int_{|\xi_1| > a^{-1} } \dots-\int_{|\xi_1| >
      b^{-1} } \dots\right).
\end{align*}
The last two terms are $\cO(a-b)=(b \, \ln b-a \, \ln a) 
\co(1)$ as~$a,b \da 0$. To show this, 
we follow line by line the arguments of {\bf a} and~{\bf b}
  respectively, noting that all integrals are well-defined because of
  the new domains of integration. Let now $I$ denote the remaining
term. Recalling that~$a > 
b$, 
\begin{equation*}
\begin{split}
I & = C_0 \int_{1<|\xi_1|<a^{-1}} |\xi_1|^{-1} \, (a \, e^{- a \, |\xi_1|}-b \, e^{- b \, |\xi_1|}) \, \sin^2 (\xi_1) \,  \dif \xi_1\\
& \quad -C_0 \int_{a^{-1}<|\xi_1|<b^{-1}} b \, |\xi_1|^{-1} \, e^{- b \, |\xi_1|} \, \sin^2 (\xi_1)  \,  \dif \xi_1\\
& =: I_1+I_2.
\end{split}
\end{equation*}
Note that 
\begin{equation*}
\begin{split}
|I_2| & \leq C_0 \int_{a^{-1}<|\xi_1|<b^{-1}} b \, |\xi_1|^{-1} \, \dif \xi_1 \\
 & = 2 \, C_0 \, b \, (\ln a-\ln b) \leq 2 \, C_0 \, (a-b)=(b \, \ln b-a \, \ln a)\co(1)
\end{split}
\end{equation*}
as $a,b \da 0$.
Hence it remains to show that~$\liminf_{a,b \stackrel{a > b>0}{\ra} 0}
\frac{I_1}{b \, \ln b-a \, \ln a}>0$. Since
\begin{equation*}
\begin{split}
a \, e^{- a \, |\xi_1|}-b \, e^{- b \, |\xi_1|} & =(a-b) \int_0^1 \left\{1-(\tau \, a +(1-\tau) \, b) \, |\xi_1|\right\} \, e^{-(\tau \, a +(1-\tau) \, b) \, |\xi_1|} \, \dif \tau\\
& \geq 
\begin{cases}
\frac{e^{-1}}{2} \, (a-b) & \mbox{for all~$|\xi_1| \leq \frac{a^{-1}}{2}$,}\\
0 & \mbox{for all~$|\xi_1| \leq a^{-1}$,} 
\end{cases}
\end{split}
\end{equation*}
we find that
\begin{equation*}
\begin{split}
I_1 & \geq C_0 \, \frac{e^{-1}}{2} \, (a-b) \int_{\frac{5 \, \pi}{4}<|\xi_1|<\frac{a^{-1}}{2}} |\xi_1|^{-1} \,  \sin^2 (\xi_1) \, \dif \xi_1\\
& \geq \frac{C_0}{4} \frac{e^{-1}}{2} \, (a-b) \int_{\frac{5 \, \pi}{4}<|\xi_1|<\frac{a^{-1}}{2}} |\xi_1|^{-1} \,  \dif \xi_1.
\end{split}
\end{equation*}
To get the last line, we have used that since $\sin^2 (\cdot) \geq \frac{1}{2}$ on $E:=[\frac{\pi}{4},\frac{3 \, \pi}{4}]+\pi \,
\mathbb{Z}$, with $\R \setminus E=E+\frac{\pi}{2}$,
\begin{equation}
\label{lbnd_int}
\begin{split}
\int_{\frac{5 \, \pi}{4}<|\xi_1|<\frac{a^{-1}}{2}} |\xi_1|^{-1} \, \sin^2 (\xi_1) \, \dif \xi_1
& = \int_{\frac{5 \, \pi}{4}<|\xi_1|} \underbrace{|\xi_1|^{-1} \, \mathbf{1}_{|\xi_1| < \frac{a^{-1}}{2}}}_{=:g(|\xi_1|)} \,  \sin^2 (\xi_1) \, \dif \xi_1\\
& \geq 
\frac{1}{2} \int_{E \cap \{\frac{5 \, \pi}{4}<|\xi_1|\}} g(|\xi_1|) \, \dif \xi_1\\
& \geq \frac{1}{4} \int_{\frac{5 \, \pi}{4}<|\xi_1|} g(|\xi_1|) \, \dif \xi_1,
\end{split}
\end{equation}
by translation and since $g$ is nonincreasing.
It follows that
$$
I_1 \geq \tilde{C}_0 \, (b-a) \, \ln a + \cO(a-b) \geq \tilde{C}_0 \, ( b \, \ln b-a \, \ln a)+(b \, \ln b-a \, \ln a) \co(1) 
$$
as $a,b \da 0$, where $\tilde{C}_0=\frac{C_0}{4 \, e}>0$.
Here we have
used that $b \, \ln a \geq b \, \ln b$, and since $b \, \ln b-a \,
\ln a>0$ for small $a>b>0$,  the proof of
\eqref{item-modulus} is complete under the assumption that
$T=\gamma=1$.

\medskip

For general $T,\gamma>0$ fixed, the result follows from rescaling. Let
$w(x,t):=\gamma^{-1} \, 
u(\gamma \, x,T \, t)$ and note that
$$
\begin{cases}
w_t+T \, \gamma^{-\alpha} \, a \, (-\triangle)^\frac{\alpha}{2} w=0,\\
w(x,0)=\mathbf{1}_{Q}(x).
\end{cases}
$$
Set $\mu:=T \, \gamma^{-\alpha}$ and $w=:w_{\mu \, a}$ to emphasize
the dependence on the new ``nonlinearity'' $\mu \, a$. Then by the
results of the $T=\gamma=1$ case above,
$$
\liminf_{a,b \da 0} \frac{\|w_{\mu \, a}-w_{\mu \, b}\|_{C([0,1];L^1)}}{\omega_{\mu \, a-\mu \, b}} >0,
$$
where  $\omega_{\cdot-\cdot}$ is defined on page \pageref{page-modulus}. By a simple change of variables,
$$
\|u_a-u_b\|_{C([0,T];L^1)} = \gamma^{d+1} \, \|w_{\mu \, a}-w_{\mu \, b}\|_{C([0,1];L^1)},
$$
and since $\omega_{\mu \, a-\mu \, b} \sim \omega_{a-b}$ as $a,b \da
0$ ($\mu$ is fixed!), \eqref{item-modulus} holds for any $T,\gamma>0$. 

\medskip

\noindent {\bf 2.} {\it Item \eqref{item-time}.} Let us adapt the
preceding arguments. We only give the proof for the case 
  $\gamma=1$ and $c=1$, 
noting that the general result then easily follows from the rescaling
$w(x,t)=\gamma^{-1} \, u(\gamma \, x,\gamma^\alp \, c^{-1} \, t)$. We have
\begin{align}
\Lip_\varphi(u;1) & \geq \lim_{a,b \ra 1} \left|\frac{\int_Q u_a(x,T) \, \dif x-\int_Q u_b(x,T) \, \dif x}{a-b}\right|\nonumber\\
& =\frac{2^d}{\pi^d} \int T \, |\xi|^\alpha \,  e^{-T \, |\xi|^\alp } \prod_{i=1}^d \sinc \! {}^2 (\xi_i) \, \dif \xi,\label{ref-scaling}
\end{align}
thanks to \eqref{opt-nonlin-tech3} written for time $T$. 
At this stage, the case $\alpha<1$ follows from a direct passage to the limit. For the other ones, we argue as in \eqref{referee-tech1}--\eqref{referee-tech2}, and find that there exists $C_0>0$  such that for all sufficiently small
$T$, 
\begin{equation*}
\Lip_\varphi(u;1) \geq C_0 \underbrace{\int T \, |\xi_1|^\alpha \, e^{-d^{\alpha-1} \, T \, |\xi_1|^\alpha} \, \sinc \! {}^2 (\xi_1) \, \dif \xi_1}_{=:I}.
\end{equation*}
It remains to prove that $\liminf_{T \da 0} \frac{I}{\sigma_T}>0.$ 
The
case $\alpha>1$ follows, as before, from the change of variable
$T^{\frac{1}{\alpha}} \, \xi_1  \mt \xi_1$ and the
$L^\infty$-weak-$\star$ convergence of $\sin^2 (T^{-\frac{1}{\alpha}} \, \cdot)$. For the $\alp=1$ case, we again
split $I$ into three parts,
$$
I=\int_{1<|\xi_1| <T^{-1} } \dots+\int_{|\xi_1| <1 } \dots+\int_{|\xi_1| > T^{-1} } \dots.
$$
As in case \eqref{item-modulus}, the two last terms are
$\cO(T)=T \, |\ln  T|\co(1)$ as $T\da0$, and the remaining integral
can be bounded below as in \eqref{lbnd_int} by   
\begin{equation*}
\tilde{C}_0 \int_{\frac{5 \, \pi}{4}<|\xi_1|<T^{-1}} T \, |\xi_1|^{-1} \,
\dif \xi_1
\geq \tilde{C}_0 \, T \, |\ln T| + T \, |\ln
T|\co(1)\quad\text{as }T\da0,
\end{equation*}
where $\tilde{C}_0>0$ is another constant independent of
$T$ small enough. The proof is complete. 

\medskip

\noindent {\bf 3.} {\it Item \eqref{test-item}.} We assume that
$T=c=1$, and note that the general case follows from the rescaling
$w(x,t)=u(T^{\frac1\alp} \, c^{\frac1\alp} \, x,T \, t)$. We
start as in the preceding case, considering 
this time integrals on $\gamma \, Q$
in \eqref{def-error}. Arguing as in \eqref{opt-nonlin-tech3} by replacing $Q$ by $\gamma \, Q$, we find that
\begin{equation*}
\begin{split}
\mathcal{E}_{\gamma \, Q} & =\int_{\gamma \, Q} u_a(x,1) \, \dif x-\int_{\gamma \, Q} u_b(x,1) \, \dif x\\
& = \frac{2^d}{\pi^d} \, \gamma^{2 \, d+1} \int \int_0^1 (b-a) |\xi|^\alpha \,  e^{-(\tau \, a+(1-\tau) \, b) \, |\xi|^\alp } \prod_{i=1}^d \sinc \! {}^2 (\gamma \, \xi_i) \, \dif \tau \, \dif \xi,
\end{split}
\end{equation*}
and hence 
\begin{equation*}
\Lip_\varphi(u;1) \geq \lim_{a,b \ra 1} \left|\frac{\mathcal{E}_{\gamma \, Q}}{a-b}\right|=\frac{2^d}{\pi^d} \, \gamma^{2 \, d+1} \int |\xi|^\alpha \,  e^{-|\xi|^\alp } \prod_{i=1}^d \sinc \! {}^2 (\gamma \, \xi_i) \, \dif \xi. 
\end{equation*}
After changing variables $\gamma \, \xi \mt \xi$, we then get that
$$
\Lip_\varphi(u;1)\geq \frac{2^d}{\pi^d} \, \gamma^{d+1} \int \gamma^{-\alpha} \, |\xi|^\alpha \,  e^{-\gamma^{-\alpha} \, |\xi|^\alp } \prod_{i=1}^d \sinc \! {}^2 (\xi_i) \, \dif \xi.
$$
This is the same expression as in \eqref{ref-scaling} with
$\gamma^{-\alpha}$ in place of $T$. Note that
$$\gamma^{d+1} \, \sigma_T {}_{\big|_{T=\gamma^{-\alp}}} = \sigma_\gamma$$ according to
the definitions of $\sigma_T$ and $\sigma_\gamma$ on page
\pageref{page-modulus}, and hence by the proof of \eqref{item-time} we
have that $\liminf_{\gamma \ra +\infty}
\frac{\Lip_\varphi(u;1)}{\sigma_\gamma} >0$.
The proof of \eqref{test-item} is complete.
\end{proof}

\begin{remark}
\label{rem_optt}
In the proof of Corollary \ref{th:time}, a rescaling
in time transformed the continuous dependence estimate
\eqref{main-esti-first} into the time continuity estimate
\eqref{esti-time}. Hence, we leave it to the reader to verify that the
same rescaling allows us to prove that \eqref{ex} is also an example
for which Corollary \ref{th:time} is optimal.
\end{remark}

\begin{proof}[Proof of Proposition \ref{prop:opt-powers}]
We adapt the arguments of the proof of Proposition \ref{prop:opt-nonlin}\eqref{item-modulus}. 

\medskip

\noindent {\bf 1.} {\it Item \eqref{item-time-Levy}.} To avoid confusion with the proof of \eqref{test-item-Levy} below, we denote the fixed parameter $\gamma$ by $\tilde{\gamma}$. We consider the new difference 
$$
\E_Q:=\int_{\tilde{\gamma} \, Q} u^\alpha(x,T) \, \dif x-\int_{\tilde{\gamma} \, Q}u^\beta(x,T) \, \dif x
$$
with moving powers $\alpha,\beta \in (0,2)$ and time $T$. We let
$M=T\, a$ and argue as in \eqref{opt-nonlin-tech3} to see that
\begin{equation*}
\begin{split}
& \E_Q \\
& =\frac{2^d}{\pi^d} \, \tilde{\gamma}^{2 \, d+1} \int
(e^{-M \, |\xi|^\alp }-e^{-M \, |\xi|^\beta }) \prod_{i=1}^d \sinc \! {}^2 (\tilde{\gamma} \, \xi_i) \, \dif \xi\\
& = \frac{2^d}{\pi^d} \, \tilde{\gamma}^{2 \, d+1} \int \int_0^1
(\beta-\alpha) \, (\ln |\xi|) \, M \, |\xi|^{\tau \, \alpha+(1-\tau) \, \beta}\\
& \quad \cdot e^{-M \, |\xi|^{\tau \, \alpha+(1-\tau) \, \beta} } \prod_{i=1}^d \sinc \! {}^2 (\tilde{\gamma} \, \xi_i) \, \dif \tau \, \dif \xi,
\end{split}
\end{equation*}
so that 
\begin{equation}\label{opt-tech-100}
\Lip_\alpha (u;\lambda) \geq \frac{2^d}{\pi^d} \, \tilde{\gamma}^{2 \, d+1} \, \Big|\underbrace{\int (\ln |\xi|) \, M \, |\xi|^\lambda \, e^{-M \, |\xi|^{\lambda }} \prod_{i=1}^d \sinc \! {}^2 (\tilde{\gamma} \, \xi_i) \, \dif \xi}_{=:I}\Big|.
\end{equation} 
To complete the proof, we must show that $\liminf_{M \da 0}
\frac{|I|}{\tilde{\sigma}_M} >0.$  

\medskip

{\bf a.} \emph{The case $\lambda<1$.} Now
$$
\lim_{M \da 0} \frac{I}{M}= \int (\ln |\xi|) \, |\xi|^\lambda \prod_{i=1}^d \sinc \! {}^2 (\tilde{\gamma} \, \xi_i) \, \dif \xi=:I_{\tilde{\gamma}}.
$$ 
Since $\sinc(0)\neq 0$,
$\lim_{\tilde{\gamma} \da 0} I_{\tilde{\gamma}}=+\infty$, and we see that
\eqref{item-time-Levy} holds for $\tilde{\gamma}$ small enough. 

\medskip

In the other two cases we split $I$ in two, $
I=\int_{|\xi| <1} \dots + \int_{|\xi|>1} \dots.$
The first integral is of order $\cO(M) = \tilde{\sigma}_M \co(1)$ as $M \da 0$, by a direct passage to the limit. Arguing as in the preceding proof (cf. \eqref{referee-tech1}--\eqref{referee-tech2}),  the last integral can be bounded from below by 
\begin{equation*}
\begin{split}
 \int_{|\xi_1|>1} 
(\ln  |\xi_1|) \, M \, |\xi_1|^{\lambda-2} \, e^{-d^{\lambda-1} \, M \, |\xi_1|^{\lambda}} \, \sin^2 (\tilde{\gamma} \, \xi_1)  \, \dif \xi_1=:J,
\end{split}
\end{equation*}
up to some positive multiplicative constant $C_0$ independent of 
$M$ small enough. 
Note that $C_0$ will also depend on
  $\tilde{\gamma}>0$ which is constant in this proof.
Hence it suffices to show that $\liminf_{M \da 0} \frac{J}{\tilde{\sigma}_M}>0$.
\medskip

{\bf b.} \emph{The case $\lambda>1$.} By the change of variables
$M^{\frac{1}{\lambda}} \, \xi_1 \mt \xi_1$,  
\begin{equation*}
\begin{split}
J& =M^{\frac{1}{\lambda}} \int_{|\xi_1|>M^{\frac{1}{\lambda}}} (\ln |\xi_1|) \, 
|\xi_1|^{\lambda-2} \, e^{-d^{\lambda-1} \, |\xi_1|^{\lambda}} \, \sin^2 (M^{-\frac{1}{\lambda}} \,  \tilde{\gamma} \, \xi_1)  \, \dif \xi_1\\
 & \quad - \lambda^{-1} \, M^{\frac{1}{\lambda}} \, (\ln M) \int_{|\xi_1|>M^{\frac{1}{\lambda}}} 
|\xi_1|^{\lambda-2} \, e^{-d^{\lambda-1} \, |\xi_1|^{\lambda}} \, \sin^2 (M^{-\frac{1}{\lambda}} \,  \tilde{\gamma} \, \xi_1)  \, \dif \xi_1.
\end{split}
\end{equation*}
It is clear that the first term is
$\cO(M^{\frac{1}{\lambda}})=M^{\frac{1}{\lambda}} \, |\ln M| \co(1)$
as $M \da 0$, and that the second one has the expected behavior
due to  $L^\infty$-weak-$\star$ convergence arguments.  

\medskip

{\bf c.} \emph{The case $\lambda=1$.} We write $ J =
\int_{|\xi_1|>M^{-1}}\dots+\int_{1 <|\xi_1|< M^{-1}}\dots.  $ The
first term is $\cO(M \, |\ln M|)=M \, (\ln^2 M) \co(1)$
as $M \da 0$ by the change of variables argument of the $\lambda>1$
case. For the remaining term, we argue as in
\eqref{lbnd_int}, using this time that $\sin^{2} (\tilde{\gamma} \, \cdot)$ is bounded below by $\frac{1}{2}$ on
$\tilde{\gamma}^{-1} \, E$. Taking $N$ so 
large that the new function $g$ (defined below) is
nonincreasing on $((4 \, N+1) \, \frac{\pi}{4} \,
\tilde{\gamma}^{-1},+\infty)$, we get a lower bound of the form
\begin{equation*}
\begin{split}
& \int_{1<|\xi_1|<M^{-1}} 
(\ln  |\xi_1|) \, M \, |\xi_1|^{-1} \, e^{- M \, |\xi_1|} \sin^2 (\tilde{\gamma} \, \xi_1)  \, \dif \xi_1 
\\
& \geq  e^{-1} \, M \int_{(4 \, N+1) \, \frac{\pi}{4} \,
\tilde{\gamma}^{-1}<|\xi_1|} \underbrace{(\ln  |\xi_1|) \, |\xi_1|^{-1} \, \mathbf{1}_{|\xi|<M^{-1}}}_{=:g(|\xi_1|)} \, \sin^2 (\tilde{\gamma} \, \xi_1) \, \dif \xi_1\\
& \geq \frac{e^{-1}}{4} \, M \int_{(4 \, N+1) \, \frac{\pi}{4} \, \tilde{\gamma}^{-1} <|\xi_1|<M^{-1}}  (\ln |\xi_1|) \, |\xi_1|^{-1} \,  \, \dif \xi_1\\
& = \frac{e^{-1}}{4} \, M \, \ln^2 M+M \, (\ln^2 M) \co(1)\quad\text{as } M \da 0.
\end{split}
\end{equation*}
The proof of \eqref{item-time-Levy} is now complete.

\medskip

\noindent {\bf  2.} {\it Item \eqref{test-item-Levy}.} To avoid confusion with the preceding proof,  we denote the fixed parameter $M=T \, a$ by $\tilde{M}$. Then, by \eqref{opt-tech-100},
\begin{equation}
\label{opt-restart}
\Lip_\alpha (u;\lambda) \geq \frac{2^d}{\pi^d} \, \tilde{M} \, \Big|\underbrace{ \gamma^{2 \, d+1} \int (\ln |\xi|) \, |\xi|^\lambda \, e^{-\tilde{M} \, |\xi|^{\lambda }} \prod_{i=1}^d \sinc \! {}^2 (\gamma \, \xi_i) \, \dif \xi}_{=:I}\Big|
\end{equation}
and it suffices to show that $\liminf_{\gamma \ra +\infty} \frac{|I|}{\tilde{\sigma}_\gamma} >0$.

\medskip 

{\bf a.} {\it The case $\lambda>1$.} Since $\ln |\xi|$ has
  different signs inside and outside the unit ball, we split the
  integral $I$ in two,
$$
I=\int_{|\xi|<1} \dots +\int_{|\xi|>1} \dots=:I_1+I_2.
$$
By the inequality $|\ln |\xi|| \, |\xi|^{\lambda} \leq d^{\lambda-1}
\sum_{i=1}^d |\ln |\xi_i|| \, |\xi_i|^{\lambda}$ for $|\xi|<1$ and the
change of variables $\gamma \, \xi_j \mt \xi_j$ for $j \neq i$, we
find that
\begin{equation*}
\begin{split}
|I_1| & \leq  d^{\lambda-1} \, \gamma^{2 \, d+1} \sum_{i=1}^d \int_{|\xi|<1} |\ln |\xi_i|| \, |\xi_i|^\lambda \prod_{j=1}^d \sinc \! {}^2 (\gamma \, \xi_j) \, \dif \xi\\
& \leq  d^{\lambda-1} \, \gamma^{d} \sum_{i=1}^d   \left\{ \int_{|\xi_i|<1} \frac{|\ln |\xi_i|| \, |\xi_i|^\lambda}{\xi_i^2} \,  \dif \xi _i \prod_{j\neq i} \int \sinc \! {}^2 (\xi_j) \, \dif \xi_j \right\}.
\end{split}
\end{equation*}
Here we also have used that $ \sin^2(\gamma \, \xi_i) \leq 1$. It follows that $\limsup_{\gamma \ra
  +\infty} \frac{|I_1|}{\gamma^d} \leq C(d,\lambda)$, a constant that does not depend on $\tilde{M}$.

Let us now see that $I_2$ is the dominant term provided that the fixed
parameter $\tilde{M}$ is chosen sufficiently small. We have 
$$
I_2 \geq \gamma^{2 \, d+1} \int (\ln |\xi_1|) \, |\xi_1|^\lambda \, e^{-\tilde{M} \, |\xi|^{\lambda }} \prod_{i=1}^d \sinc \! {}^2 (\gamma \, \xi_i) \, \dif \xi,
$$
and then, letting $\xi_\gamma:=(\xi_1,\gamma^{-1} \,
\xi_2,\dots,\gamma^{-1} \, \xi_d)$ and changing variables $\gamma \,
\xi_i \mt \xi_i$ for $i\neq 1$, we find that
\begin{equation*}
I_2 \geq \gamma^{d} \int \left\{ (\ln |\xi_1|) \, |\xi_1|^{\lambda-2} \int  e^{-\tilde{M} \, |\xi_\gamma|^{\lambda }} \prod_{i=2}^d \sinc \! {}^2 (\xi_i) \, \dif \xi_2 \dots \dif \xi_d \right\}\sin^2(\gamma \, \xi_1) \, \dif \xi_1.
\end{equation*} 
By $L^\infty$-weak-$\star$ convergence arguments, $
\liminf_{\gamma \ra +\infty} \frac{I_2}{\gamma^d} \geq \tilde{m} \, I_{\tilde{M}},
$
where 
$$
\tilde{m}:=\int_0^{2 \, \pi} \sin^2(\xi_1) \, \dif \xi_1 \prod_{i=2}^d \int \sinc \! {}^2 (\xi_i) \, \dif \xi_i>0
$$ 
and $
I_{\tilde{M}} :=\int (\ln |\xi_1|) \, |\xi_1|^{\lambda-2} \, e^{-\tilde{M} \, |\xi_1|^\lambda} \, \dif \xi_1.
$
Since $\lim_{\tilde{M} \da 0} I_{\tilde{M}}=+\infty$, it suffices to
fix $\tilde{M}>0$ small to get  \eqref{test-item-Levy} in the
$\lambda>1$ case. 

\medskip 

{\bf b.} {\it The case $\lambda<1$.} We restart from
\eqref{opt-restart}, change the variables $\gamma \, \xi \mt \xi$, and
pass to the limit as $\gamma\ra +\infty$. The result follows.

\medskip 

{\bf c.} {\it The case $\lambda=1$.} Let us rewrite $I$ in
\eqref{opt-restart} as 
$$
I=\underbrace{ \gamma^{2 \, d+1} \int_{\gamma^{-1} < |\xi| <1} \dots}_{=:-J_1} + \underbrace{\gamma^{2 \, d+1} \int_{|\xi|>1} \dots}_{=:J_2}+\gamma^{2 \, d+1} \int_{|\xi|<\gamma^{-1}}\dots .
$$
By the arguments of the $\lambda<1$ case, the last integral is of order $\cO(\gamma^d \, \ln \gamma)=\gamma^d \, (\ln^2 \gamma) \co(1)$ as $\gamma \ra +\infty$. For $J_2$, we use that
\begin{equation*}
\begin{split}
\left|\int_{|\xi|>1} \dots \right| \leq \int_{\cup_{i=1}^d \left\{\xi:|\xi_i|>\frac{1}{\sqrt{d}}\right\}} |\dots|  & \leq \sum_{i=1}^d \int_{\xi:|\xi_i|>\frac{1}{\sqrt{d}}} |\dots|=d  \int_{\xi:|\xi_1|>\frac{1}{\sqrt{d}}} |\dots|
\end{split}
\end{equation*}
by symmetry of the $I$-integrand (cf. \eqref{opt-restart}). We then bound $|\ln |\xi|| \, |\xi| \, e^{-
 \tilde{M}
|\xi|}$ by some constant $C$, change the variables $\gamma \, \xi_i
\mt \xi_i$ for $i \neq 1$, get 
\begin{equation*}
\begin{split}
|J_2| & \leq d \, C \, \gamma^{d} \int_{|\xi_1|>\frac{1}{\sqrt{d}}} \xi_1^{-2} \, \dif \xi_1 \prod_{i=2}^d \int \sinc \! {}^2 (\xi_i) \, \dif \xi_i
\end{split}
\end{equation*}
and conclude that $J_2=\cO(\gamma^d)=\gamma^d \, (\ln^2 \gamma) \co(1)$ as $\gamma \ra +\infty$. 

Since $J_1>0$, it remains to show that $\liminf_{\gamma \ra +\infty}
\frac{J_1}{\gamma^d \,\ln^2 \gamma}>0$. It will be convenient to use
the notation $\hat{\xi}:=(\xi_2,\dots,\xi_d)$. By the change of
variables $\gamma \, \xi \mt \xi$ and the inequality $|\xi| \,
\xi_1^{-2} \geq |\xi|^{-1}$,
\begin{equation*}
\begin{split}
J_1 & \geq - \gamma^{d} \int_{1<|\xi|< \gamma}  
\frac{\ln  (\gamma^{-1} \, |\xi|)}{ |\xi|} \, f(\hat{\xi}) \, \sin^2(\xi_1) \, \dif \xi,
\end{split}
\end{equation*}
where $f(\hat{\xi}):= e^{-\tilde{M}} \prod_{i=2}^d \sinc \! {}^2 (\xi_i)$. 
Let us restrict to the domain of integration
$$
A:=\left\{(\xi_1,\hat{\xi}) \mbox{ s.t. } \frac{5 \, \pi}{4}<|\xi_1| <\frac{1}{2} \,  \sqrt{\gamma^2-|\hat{\xi}|^2} \mbox{ and } \epsilon^2 <|\hat{\xi}| < \epsilon \right\},
$$ 
where $\epsilon>0$ is fixed and so small that $A \subset \{1<|\xi|<\gamma\}$ and $C_0:=\min_A f(\hat{\xi})>0$. Then,
\begin{equation*}
\begin{split}
J _1& \geq -C_0 \, \gamma^{d} \int_{\epsilon^2 < |\hat{\xi}|<\epsilon} \int_{\frac{5 \, \pi}{4}<|\xi_1|<\frac{1}{2} \sqrt{\gamma^2-|\hat{\xi}|^2}}  
\frac{\ln  (\gamma^{-1} \, |\xi|)}{ |\xi|} \, \sin^2(\xi_1) \, \dif \xi_1 \, \dif \hat{\xi}.
\end{split}
\end{equation*}
Arguing as in \eqref{lbnd_int},
\begin{equation*}
J_1 \geq -\tilde{C}_0 \, \gamma^{d} \int_{\epsilon^2 < |\hat{\xi}|<\epsilon} \int_{\frac{5 \, \pi}{4}<|\xi_1|<\frac{1}{2} \, \sqrt{\gamma^2-|\hat{\xi}|^2}}  
 \frac{\ln  (\gamma^{-1} \, |\xi|)}{ |\xi|} \, \dif \xi_1 \, \dif \hat{\xi},
\end{equation*}
where $\tilde C_0=\frac{C_0}{4}>0$. 
If $\gamma$ is large enough, then
for all $\xi \in A$, $\gamma^{-1} \, |\xi|\leq \gamma^{-1} \,
(|\xi_1|+|\hat{\xi}|)<1$. We can then use that
$$
-  \frac{\ln  (\gamma^{-1} \, |\xi|)}{ |\xi|} \geq
-  \frac{\ln  \left(\gamma^{-1} \, (|\xi_1|+|\hat{\xi}|)\right)}{|\xi_1|+|\hat{\xi}|}, 
$$
and by integrating the right-hand side, we get
\begin{equation*}
\begin{split}
J_1 & \geq  \tilde{C}_0 \, \gamma^{d} \int_{\epsilon^2 < |\hat{\xi}|<\epsilon} \ln^2 \left( \gamma^{-1} \left(\frac{5 \, \pi}{4} + \, |\hat{\xi}|\right) \right) \dif \hat{\xi}\\
& \quad -\tilde{C}_0 \, \gamma^{d} \int_{\epsilon^2 < |\hat{\xi}|<\epsilon} \ln^2 \left(\frac{1}{2} \, \sqrt{1-\gamma^{-2} \, |\hat{\xi}|^2}+\gamma^{-1} \, |\hat{\xi}|\right) \dif \hat{\xi}\\
& \geq \tilde{\tilde{C}}_0 \, \gamma^{d} \, \ln^2 \gamma+\gamma^d \,
(\ln^2 \gamma)\co(1) \quad\text{as } \gamma\ra+\infty.
\end{split}
\end{equation*}
Here $\tilde{\tilde{C}}_0$ is another positive constant independent of
$\gamma$ large enough. The proof is complete.
\end{proof}

\appendix

\section{Proofs of \eqref{YL}, \eqref{YBV} and \eqref{YC}}\label{app-app}

\begin{proof}[Proof of \eqref{YL} and \eqref{YBV}]
Recall that~$\Omega_\xi(\cdot)$ is defined on page \pageref{po} and
$\chi_a^b$ in \eqref{si}.
For $A$, we use that
\begin{align*}
& \int \|\Omega_\xi(u)\|_{L^1(Q_T)} \, \dif \xi  = \int \int_{Q_T} \left|\int \chi_0^{u(x,t)}(\zeta) \,  \omega_\delta(\xi-\zeta) \,  \dif \zeta\right| \dif x \, \dif t \, \dif \xi\\
& = \int \int_{Q_T } \left|\chi_0^{u(x,t)}(\zeta) \right| \int \omega_\delta(\xi-\zeta) \, \dif \xi \, \dif x \, \dif t \, \dif \zeta= \|u\|_{L^1(Q_T)}.
\end{align*}
For $B$, we consider $\{u_n\}_n \subset C([0,T];W^{1,1})$ converging to $u$ in $C([0,T];L^1)$ and such that $\int_{Q_T} |\nabla u_ n| \ra |u|_{L^1(0,T;BV)}$. Then
\begin{equation*}
\begin{split}
& \int \int_{Q_T} \left| \nabla \Omega_{\xi}(u_n) \right| \dif x \, \dif t \, \dif \xi \\
& = \int \int_{Q_T} \omega_\delta (\xi-u_n(x,t)) \, |\nabla u_n(x,t)| \, \dif x \, \dif t \, \dif \xi= \int_{Q_T} |\nabla u_n|,
\end{split}	
\end{equation*}
so that
\begin{equation*}
\begin{split}
& \int |\Omega_\xi(u)|_{L^1(0,T;BV)} \, \dif \xi \leq \int \int_0^T \left\{ \liminf_{n \ra +\infty} \int_{\R^d} \left| \nabla \Omega_{\xi} (u_n) \right| \dif x \right\} \dif t \, \dif \xi \\
& \leq \lim_{n \ra +\infty} \int \int_{Q_T} \left| \nabla \Omega_{\xi} ( u_n) \right| \dif x \, \dif t \, \dif \xi  =|u|_{L^1(0,T;BV)},
\end{split}	
\end{equation*}
due to the lower semi-continuity of the $BV$-semi-norm with respect to the $L^1$-norm and to Fatou's lemma.\footnote{For the reverse inequality, we use that, at all fixed time and for all $\Phi \in C^1_c(\R^d,\R^d)$ such that $|\Phi| \leq 1$,
\begin{equation*}
\begin{split}
& \int |\Omega_\xi(u)|_{BV} \, \dif \xi \geq \int \int_{\R^d} \Omega_\xi(u) \, \Div \Phi \,  \dif x  \, \dif \xi   \\
& = \int \int_{\R^d} \int \chi_0^u(\zeta) \, \omega_{\delta}(\xi-\zeta) \, \Div \Phi \,  \dif \zeta \, \dif x \, \dif \xi 
= \int_{\R^d} u \, \Div \Phi \, \dif x, 
\end{split}	
\end{equation*}
and next we take the supremum with respect to $\Phi$.}
\end{proof}

\begin{proof}[Proof of~\eqref{YC}]
Recall that~$\E_3$ and~$\E_3(\delta)$ are defined in~\eqref{nat:cnl-1} and~\eqref{def-a-lp}, respectively. See also the original assumption~\eqref{set-assumptions} of the theorem, and the simplifying assumption~\eqref{hnd}. First we define
$$
F_v(x,t,y,s,z,\xi):=\sgn(v-u) \, \frac{\chi_{v(x,t)}^{v(x+z,t)}(\xi)}{|z|^{d+\alpha}} \, \phi^{\epsilon,\nu}.
$$
Let us recall that~$F_v$ is integrable on~$Q_T^2 \times \{|z|>r\}
\times \R$ since $\int |\chi_{a}^{b}(\xi) | \, \dif \xi = |b-a|$. Hence, by Fubini the function
$$
G_v(\xi):= \int_{Q_T^2} \int_{|z|>r} F_v(x,t,y,s,z,\xi) \, \dif z \, \dif w
$$
is integrable with respect to~$\xi \in \R$. But, by~\eqref{e3-lp},~\eqref{def-a-lp}, 
$$
\E_3= G_d(\alpha) \int G_v(\xi) \, \psi'(\xi)-G_u(\xi) \, \varphi'(\xi) \, \dif \xi
$$
and
$
\E_{3}(\delta) = G_d(\alpha) \int \psi'(\xi) \, G_v \ast \omega_\delta (\xi) - \varphi'(\xi) \, G_u \ast \omega_\delta (\xi) \,  \dif \xi,
$
where~$\ast$ is the convolution product in~$\R$. Since~$\omega_\delta$ is an approximate unit, the convolution products inside the integral respectively converge to~$G_v$ and~$G_u$ in~$L^1(\R)$ as~$\delta \da 0$.
Using in addition that~$\varphi'$ and~$\psi'$ are bounded by \eqref{hnd}, $\lim_{\delta \da 0} \E_3(\delta)=\E_3$ . 
\end{proof}

\section{Some technical lemmas}

\begin{lemma}\label{lem-tech-nl}
For all~$a,b >0$,~$|a-b| \, (-\ln (a \vee b))^+ \leq |a-b|+|a \, \ln a-b \, \ln b|$.
\end{lemma}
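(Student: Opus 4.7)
The plan is to reduce to a one-sided, single-case scenario by symmetry and by checking when the factor $(-\ln(a\vee b))^+$ is nontrivial. First I would assume without loss of generality that $a \geq b > 0$, so that $a\vee b = a$. If $a \geq 1$, then $(-\ln a)^+ = 0$ and the inequality is immediate, so the only interesting case is $0 < b \leq a < 1$, where the inequality reduces to
\begin{equation*}
(a-b)(-\ln a) \leq (a-b) + |a\ln a - b\ln b|.
\end{equation*}

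The key observation I would then exploit is that $t \mapsto t\ln t$ has derivative $\ln t + 1$, so by the fundamental theorem of calculus one has the identity
\begin{equation*}
a\ln a - b\ln b - (a-b)(\ln a + 1) = b\ln(a/b) - (a-b).
\end{equation*}
Since $\ln x \leq x - 1$ for all $x>0$, applied with $x = a/b$ this gives $b\ln(a/b) - (a-b) \leq 0$, hence $a\ln a - b\ln b \leq (a-b)(\ln a + 1)$. Multiplying by $-1$ and noticing that $(a-b)(\ln a + 1) = -(a-b)(-\ln a) + (a-b)$, we obtain
\begin{equation*}
(a-b)(-\ln a) - (a-b) \leq -(a\ln a - b\ln b) \leq |a\ln a - b\ln b|,
\end{equation*}
which is the desired bound after rearranging.

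I do not expect a real obstacle here; the only minor subtlety is keeping track of signs when $a\ln a - b\ln b$ changes sign on the relevant region $0<b\leq a<1$ (it is negative near the boundary $a\to 0^+$ but can be positive when $a,b$ are close to $1$), which is handled cleanly by taking the absolute value only at the final step after the single application of the elementary inequality $\ln x \leq x-1$.
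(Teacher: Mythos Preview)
Your proof is correct and uses essentially the same idea as the paper's---both exploit that the derivative of $t\mapsto t\ln t$ is $1+\ln t$. The paper writes $a\ln a - b\ln b = \int_b^a (1+\ln\tau)\,\dif\tau$, restricts further to $a\le e^{-1}$ so that $1+\ln\tau<0$ on $[b,a]$ (hence the sign of $a\ln a-b\ln b$ is fixed), and then uses the monotonicity of $\ln$ on the remaining integral; your version instead packages the same computation into the identity $a\ln a-b\ln b-(a-b)(\ln a+1)=b\ln(a/b)-(a-b)$, applies $\ln x\le x-1$, and postpones the absolute value to the last step, which lets you treat the whole range $0<b\le a<1$ at once without the extra split at $e^{-1}$.
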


\begin{proof}
We assume without loss of generality that $a \vee b=a$ and that $a\leq e^{-1}$ (the result is trivial otherwise). Then
$$
|a \, \ln a - b \, \ln b|= -\int_b^a (1+ \ln \tau) \, \dif \tau =-|a-b| -\int_b^a (\ln \tau) \, \dif \tau,
$$
since $1+\ln \tau$ is negative, and hence
$$
|a-b|+|a \, \ln a - b \, \ln b| \geq -|a-b| \, \ln a,
$$ 
since the logarithm is nondecreasing. 
This completes the proof.
\end{proof}

\begin{lemma}\label{lem-taylor}
For all~$x>0$,~$a,b\neq 0$ and $c>0$, 
\begin{enumerate}[\rm (i)]
\item \label{t3}
$\left|\frac{x^a-1}{a}-\frac{x^b-1}{b}\right| \leq |a-b| \, (1
\vee x^{a} \vee x^{b}) \, \ln^2 x$, 
\item \label{t5} $\lim_{a,b \ra c}
\left\{\frac{1}{(a-b)^2} \left|\frac{x^{2 \, a}}{2 \, a}+\frac{x^{2 \, b}}{2 \, b}-\frac{2 \, x^{a+b}}{a+b}\right|\right\} \leq C \, x^{2 \, c} \, (1 + \ln^2 x),$ 
\end{enumerate}
where $C=C(c)$. 
\end{lemma}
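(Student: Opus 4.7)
For (i), my plan is to exploit the integral representation
\[
\frac{x^a - 1}{a} = \int_0^1 (\ln x)\, x^{t a} \, \dif t \qquad (a \neq 0),
\]
and then apply the fundamental theorem a second time to the integrand. This gives
\[
\frac{x^a-1}{a} - \frac{x^b-1}{b} = \int_0^1 \int_b^a (\ln^2 x)\, t\, x^{ts} \, \dif s\, \dif t,
\]
so the matter reduces to bounding $x^{ts}$ uniformly for $t \in [0,1]$ and $s \in \mathrm{co}\{a,b\}$. A short case analysis on the signs of $\ln x$ and of $s$ shows that $x^{ts} \leq 1 \vee x^s \leq 1 \vee x^a \vee x^b$ in every configuration; plugging this in and carrying out the remaining $\dif s\,\dif t$ integration immediately yields the claim (in fact with a better constant $\tfrac12$ in front).

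For (ii), I will change to the symmetric variables $u := a-b$ and $v := (a+b)/2$ and introduce
\[
h(u;v) := \frac{x^{2v+u}}{2v+u},
\]
so that the quantity inside the absolute value becomes $h(u;v) + h(-u;v) - 2 h(0;v)$. Because of the evenness in $u$, a Taylor expansion in $u$ kills the first-order term and gives $h(u;v)+h(-u;v)-2h(0;v) = h_{uu}(0;v)\, u^2 + E(u;v)$, with the remainder controlled by the $L^\infty_{\mathrm{loc}}$-norm of $h_{uuu}$ near $(0,c)$ uniformly in $v$. Dividing by $u^2 = (a-b)^2$ and letting $(a,b) \to (c,c)$ (equivalently $(u,v) \to (0,c)$) therefore yields
\[
\lim_{a,b \to c} \frac{1}{(a-b)^2}\Big|h(u;v)+h(-u;v)-2h(0;v)\Big| = |h_{uu}(0;c)|.
\]
A direct differentiation produces the explicit formula
\[
h_{uu}(0;c) = x^{2c}\left( \frac{\ln^2 x}{2c} - \frac{\ln x}{2c^2} + \frac{1}{4c^3}\right),
\]
and bounding the middle summand via $|\ln x| \leq \tfrac12(1+\ln^2 x)$ gives the announced inequality with a constant $C=C(c)$ depending only on $c$.

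The main obstacle in each part is essentially bookkeeping rather than a genuine analytic difficulty: in (i), one must carefully split by the signs of $\ln x$ and of points in $\mathrm{co}\{a,b\}$ to replace $x^{ts}$ by $1 \vee x^a \vee x^b$; in (ii), the substantive point is to choose the $(u,v)$-coordinates so that the symmetry in $(a,b)$ immediately cancels odd-order terms in the Taylor expansion, reducing the verification to an explicit computation of one second derivative.
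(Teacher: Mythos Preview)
Your argument for part (i) is essentially identical to the paper's: the paper writes $f(a)=\frac{x^a-1}{a}=(\ln x)\int_0^1 x^{\tau a}\,\dif\tau$, differentiates under the integral to get $f'(a)=(\ln^2 x)\int_0^1 \tau\, x^{\tau a}\,\dif\tau$, and then uses $f(a)-f(b)=(a-b)\int_0^1 f'(\tau a+(1-\tau)b)\,\dif\tau$, which is exactly your double integral after a linear change of variable. The bound $x^{\tilde\tau(\tau a+(1-\tau)b)}\le 1\vee x^a\vee x^b$ is the same as your case analysis.

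For part (ii) your route is genuinely different from the paper's, and it is correct. The paper proceeds purely algebraically: it verifies the exact identity
\[
\frac{x^{2a}}{2a}+\frac{x^{2b}}{2b}-\frac{2x^{a+b}}{a+b}
=\frac{(x^a-x^b)^2}{2(a+b)}+\frac{(bx^a-ax^b)^2}{2ab(a+b)},
\]
bounds the first square by $|x^a-x^b|\le |a-b|\,(x^a\vee x^b)\,|\ln x|$, and handles the second via the decomposition $bx^a-ax^b=\tfrac12\big[(b-a)(x^a+x^b)+(a+b)(x^a-x^b)\big]$ together with $(A+B)^2\le 2A^2+2B^2$. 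This yields a pointwise inequality for \emph{all} $a,b$ in a neighbourhood of $c$, which then specializes to the limit. Your Taylor expansion in the symmetric variable $u=a-b$ is more conceptual: the evenness of $u\mapsto h(u;v)+h(-u;v)$ immediately explains the factor $(a-b)^2$, and the computation reduces to a single second derivative. The trade-off is that your approach gives only the limiting value (which is exactly what the lemma asserts and what the paper actually uses), whereas the paper's algebraic identity yields a quantitative bound uniformly in $(a,b)$ near $(c,c)$. Your explicit computation of $h_{uu}(0;c)$ and the final bound via $|\ln x|\le\tfrac12(1+\ln^2 x)$ are both correct.
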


\begin{proof}
\eqref{t3} Let $f(a)=\frac{x^a-1}{a}$. Observe that $
(\ln x) \int_0^1 x^{\tau \, a} \, \dif \tau=f(a)$
by a Taylor expansion of $x^a$ at $a=0$. It then follows that by differentiating under the integral sign that
$
f'(a)=
(\ln^2 x) \int_0^1 \tau \, x^{\tau \, a} \, \dif \tau
$
and
\begin{align*}
& f(a)-f(b) =(a-b) \int_0^1 f'(\tau \, a+ (1-\tau) \, b) \, \dif \tau
\\& 
= (a-b) \, (\ln^2 x) \int_0^1 \int_0^1 \tilde{\tau} \, x^{\tilde{\tau} \, (\tau \, a+(1-\tau) \, b)} \, \dif \tilde{\tau}\, \dif \tau .
\end{align*}
Since $x^{\tilde{\tau} \, (\tau \, a+(1-\tau) \, b)} \leq 1 \vee x^{a
  \vee b} \vee x^{a \wedge b}$,
we find that
\begin{align*}
& |f(a)-f(b)| \leq |a-b| \, (\ln^2 x) \, (1 \vee x^{a } \vee
x^{ b}), 
\end{align*}
and the proof of \eqref{t3} is complete.
\medskip

\noindent \eqref{t5} Note that
\begin{equation*}
\begin{split}
\frac{x^{2 \, a}}{2 \, a}+\frac{x^{2 \, b}}{2 \, b}-\frac{2 \, x^{a+b}}{a+b} & = \frac{b \, (a+b) \, x^{2 \, a}
+a \, (a+b) \, x^{2 \, b}-4 \, a\, b\, x^{a+b} }{2 \, a \,  b \, (a+b)}\\
& = \frac{(x^{a}-x^b)^2}{2 \, (a+b)}+\frac{(b \, x^{a}-
a \, x^{b})^2}{2 \, a \,  b \, (a+b)}.
\end{split}
\end{equation*}
The first term satisfies~$|x^a-x^b| \leq |a-b| \, (x^a \vee x^b) \, |\ln x|$. Moreover, by adding and  
subtracting terms and the inequality $\frac{(A+B)^2}{2} \leq A^2+B^2$,
we find that the second term is bounded by 
$$
(b \, x^{a}-
a \, x^{b})^2 \leq \frac{1}{2} \, (a-b)^2 \,
(x^a+x^b)^2+\frac{1}{2} \, (a+b)^2 \, (x^a-x^b)^2.
$$ 
The proof now follows from these two inequalities. 
\end{proof}

\section{Proofs of Lemma \ref{lem01} and \eqref{referee-def-norm}}
\label{eta-entropy}

\begin{proof}[Proof of Lemma \ref{lem01}] The if part follows by approximating the
Kruzhkov 
entropy $u \mt |u-k|$ 
by 
smooth convex entropies
$u \mt \eta_n(u):=\sqrt{(u-k)^2+n^{-2}}-n^{-1}$. The functions $\eta_n(\cdot)$
and
$\eta_n'(\cdot)$ are locally uniformly bounded and converge pointwise
to $|\cdot-k|$ 
and the everywhere representative of its weak
derivative given by \eqref{nat:representation-sign}. 
Hence, if a function $u=u(x,t)$ is bounded and such that
$\eta_n(u)$ satisfies \eqref{entropy-ineq-eta}, we can use the dominated
convergence theorem to pass to the limit and find that $|u-k|$
satisfies \eqref{entropy_ineq}. 

To prove the only if part, we note that we may approximate (locally
uniformly) any convex entropy $\eta \in C^1(\R)$ 
by a family of 
piecewise linear functions $\tilde\eta_n$ of the form
\begin{align}
\label{teta}
u \mt \tilde\eta(u)=a+b \, u+\sum_{i=1}^m c_i \, |u-k_i|
\end{align}
where $a,b,k_i\in\R$, $c_i\geq0$, and $m\in\N$. See e.g.
\cite[p.~27]{HoRi07} for a proof. We need to refine this construction to
ensure everywhere convergence of the derivatives
$\tilde\eta'_n$. Consider the everywhere defined representative of
$\tilde\eta'$ defined by
\begin{align}
\label{teta'}
u \mt \tilde\eta'(u)=b+\sum_{i=1}^m c_i \, \sgn(u-k_i),
\end{align}
where the sign function is everywhere defined by
\eqref{nat:representation-sign}. Since $\eta'$ is continuous, it can
be approximated uniformly on compact sets 
by piecewise constant functions of
the form \eqref{teta'}. Take such a sequence $\{\tilde\eta'_n\}_n$ that converges locally uniformly on $\R$
and 
redefine $\{\tilde\eta_n\}_n$ to be the primitives such that
$\tilde\eta_n(0)=\eta(0)$, i.e. functions of the form \eqref{teta}. It
follows that both $\tilde\eta_n$ and $\tilde\eta'_n$ converge
locally uniformly towards $\eta$ and $\eta'$. 

Consider next the entropy solution $u=u(x,t)$ of \eqref{1} and note that the left-hand side of the entropy inequality \eqref{entropy-ineq-eta} is linear
with respect to $\eta$, that \eqref{entropy-ineq-eta} holds with  $\eta(u)=a+b \, u$ since $u$
is a weak (distributional) 
solution of \eqref{1}, and that \eqref{entropy-ineq-eta} holds with
$\eta(u)=c_i \, |u-k_i|$ by the Kruzhkov inequality \eqref{entropy_ineq}
since $c_i\geq0$. The reader may then check that 
\eqref{entropy-ineq-eta} also holds 
with $\eta=\tilde\eta$ and the everywhere
representative of $\tilde\eta'$ given by \eqref{teta'}. 

Since $u$ is bounded, we 
may use
the dominated convergence theorem to 
pass to the limit in
\eqref{entropy-ineq-eta} with $\eta=\tilde\eta_n$ to find that
\eqref{entropy-ineq-eta} holds also for the $\eta$ in the limit. The proof is complete. 
\end{proof}

\begin{proof}[Proof of \eqref{referee-def-norm}]
Combining \eqref{referee-fourier-def} and \eqref{nat:Levy-form},
\begin{equation*}
\begin{split}
\int_{\R^d} |2 \, \pi \, \xi|^\alpha \, |\mathcal{F} u|^2 \, \dif \xi &= \int_{\R^d} u \, \mathcal{F}^{-1} \left(|2 \, \pi \, \cdot|^\alpha \, \mathcal{F} u\right) \dif x\\
& = \int_{\R^d} u \, (-\triangle)^{\frac{\alpha}{2}} u \, \dif x\\
& = -\lim_{r \da 0} \int_{\R^d} u \, \Levy^{\alpha,r}[u] \, \dif x\\  
& = \frac{G_d(\alpha)}{2}  \iint_{\R^{2 \, d}}  
\frac{(u(x)-u(y))^2}{|x-y|^{d+\alpha}} \, \dif x \, \dif y, 
\end{split}
\end{equation*}
thanks to \eqref{ineqH1-1} with $v=u$ to get the last line.
\end{proof}

\end{document}